\documentclass[11pt]{amsart}
\usepackage{amsfonts,amssymb,amsthm}
\usepackage{amsmath,amscd}
\usepackage{pstricks}
\usepackage{pstricks,pst-node}
\usepackage{mathrsfs}
\usepackage{enumitem}
\usepackage[all]{xy}

\newcommand{\ttE}{\mathsf{E}}
\newcommand{\ttF}{\mathsf{F}}
\newcommand{\ttK}{\mathsf{K}}
\newcommand{\R}{\mathsf{R}}

\newcommand{\epmu}{\varepsilon^\mu}
\newcommand{\tepmu}{\widetilde\varepsilon^\mu}
\newcommand{\epvmu}{\varepsilon_v^\mu}
\newcommand{\tepvmu}{\widetilde\varepsilon_v^\mu}
\newcommand{\epvla}{\varepsilon_v^\lambda}
\newcommand{\tepvla}{\widetilde\varepsilon_v^\lambda}
\newcommand{\epla}{\varepsilon^\lambda}

\newcommand{\Phimn}{\Phi_{m|n}}
\newcommand{\Phimnv}{\Phi_{m|n}}
\newcommand{\C}{\mathbb{C}}
\newcommand{\Sm}{\mathfrak{S}_m}
\newcommand{\Sn}{\mathfrak{S}_n}
\newcommand{\Topx}{\operatorname{Top}_x}
\newcommand{\degx}{\deg_x}
\newcommand{\ellp}{\ell}
\newcommand{\len}{\ell}

\newcommand{\ud}{\unlhd}

\DeclareMathAlphabet{\mathpzc}{OT1}{pzc}{m}{it}

\theoremstyle{plain}
\newtheorem{Thm}{Theorem}[section]
\newtheorem{Prop}[Thm]{Proposition}
\newtheorem{Lem}[Thm]{Lemma}
\newtheorem{Coro}[Thm]{Corollary}
\theoremstyle{definition}

\newtheorem{Def}[Thm]{Definition}

\newtheorem{Rem}[Thm]{Remark}

\numberwithin{equation}{section}

\def\fS{{\frak S}}

\newcommand{\msZ}{\mathscr Z}

\newcommand{\msP}{\mathscr P}
\newcommand{\msPr}{\mathscr P_r}
\newcommand{\msPmn}{\mathscr P^{m,n}}

\newcommand{\msN}{\mathscr N}

\newcommand{\msD}{\mathscr D}

\def\sfe{{\mathsf e}}
\def\sff{{\mathsf f}}

\def\sfk{{\mathsf k}}

\def\sH{{\mathcal H}}

\def\sQ{{\mathbb C(v)}}

\def\sX{{\mathcal X}}

\newcommand{\bfHr}{{\mathcal H}_v(\frak{S}_r)}

\newcommand{\bfSr}{{{\mathcal S}}_v(m|n,r)}

\newcommand{\mbnn}{\mathbb N^{n}}
\newcommand{\mbnm}{\mathbb N^{m}}
\newcommand{\mbn}{\mathbb N}

\newcommand{\mbc}{\mathbb C}
\newcommand{\mbz}{\mathbb Z}

\newcommand{\tts}{ {s}}

\newcommand{\End}{\operatorname{End}}
\newcommand{\Hom}{\operatorname{Hom}}

\newcommand{\la}{{\lambda}}
\newcommand{\La}{\Lambda}

\newcommand{\Ga}{{\Lambda}}
\newcommand{\Gam}{{\Lambda}_m}

\newcommand{\dt}{\delta}

\newcommand{\up}{v}

\newcommand{\al}{\alpha}
\newcommand{\bt}{\beta}
\newcommand{\sg}{\sigma}

\newcommand{\bsH}{\sH}

\def\ggp#1#2{\left[\kern-3.2pt\left[{#1\atop #2}\right]\kern-3.2pt\right]}

\def\leq{\leqslant}\def\geq{\geqslant}
\def\le{\leqslant}\def\ge{\geqslant}

\newcommand{\ot}{\otimes}

\newcommand{\bin}{\bigcup}
\newcommand{\jiao}{\bigcap}
\newcommand{\han}{\subseteq}

\newcommand{\ti}{\widetilde}

\newcommand{\ra}{\rightarrow}

\newcommand{\bfdim}{{\mathbf{dim\,}}}

\newcommand{\Par}{\msP_r}
\newcommand{\Pamr}{\mathbb{P}(m,r)}
\newcommand{\Hmn}{{H}(m|n)}
\newcommand{\Hmnr}{{H}(m|n,r)}
\newcommand{\Hpowmnr}{{H}^{\vee}(m|n,r)}
\newcommand{\Hpow}{H^{\vee}}
\newcommand{\Hnmr}{{H}(n|m,r)}

\newcommand{\fSr}{\fS_r}
\newcommand{\ZHr}{\mathscr{Z}\left(\bfHr\right)}
\newcommand{\ZSq}{\mathscr{Z}\left(\Sqmnr\right)}

\def \pa#1{\bar{#1}}

\newcommand{\glmn}{\mathfrak{gl}_{m|n}}

\newcommand{\bU}{{U}_v(\frak{gl}_{m|n})}
\newcommand{\Uqglmn}{{U}_v(\frak{gl}_{m|n})}

\newcommand{\Sqmnr}{\mathcal S_v(m|n, r)}
\newcommand{\Sqmnrz}{\mathcal S_v^0(m|n, r)}
\newcommand{\Uglmn}{\mathcal{U}(\mathfrak{g l}_{m|n})}
\newcommand{\Smnr}{\mathcal S(m|n, r)}
\newcommand{\Lav}{\Lambda_v}

\newcommand{\Lamnrv}{\Lambda^r_{m|n,v}}
\newcommand{\Lamnv}{\Lambda_{m|n,v}}
\newcommand{\Lar}{\Lambda^r}

\newcommand{\Lamn}{\Lambda_{m|n}}
\newcommand{\Lamnr}{\Lambda^r_{m|n}}
\newcommand{\LambdaT}{{\Lambda}}
\newcommand{\tiLamn}{{\Lambda}_{m|n}}
\newcommand{\tiLamnr}{{\Lambda}^r_{m|n}}

\newcommand{\hs}{\mathrm{hs}}
\newcommand{\hm}{\mathrm{hm}}
\newcommand{\hp}{\mathrm{hp}}
\newcommand{\hpk}{\mathrm{hp}_k(x_1,\dots,x_m;y_1,\dots,y_n)}
\newcommand{\hpla}{\mathrm{hp}_\lambda(x_1,\dots,x_m;y_1,\dots,y_n)}
\newcommand{\zr}{\zeta_r}
\newcommand{\zrcl}{\zeta_r^{\mathrm{cl}}}
\newcommand{\etarcl}{\eta_r^{\mathrm{cl}}}
\newcommand{\Zr}{\zeta_r}
\newcommand{\Er}{\eta_r}
\newcommand{\er}{\xi_r}

\newcommand{\hb}[1]{\bar{#1}}
\newcommand{\hi}{{\bar{i}}}
\newcommand{\os}[1]{\bar{#1}}

\begin{document}
\title{Centers of quantum Schur superalgebras from Hecke algebras}

\author{Qiang Fu}
\address{School of Mathematical Sciences,
Key Laboratory of Intelligent Computing and Applications (Ministry of Education),
Tongji University, Shanghai, 200092, China.}
\email{q.fu@hotmail.com, q.fu@tongji.edu.cn}

\author{Yingshan Luo}
\address{School of Mathematical Sciences, Tongji University, Shanghai, 200092, China.}
\email{luoyingshan430@163.com}

\author{Chengquan Sun}
\address{School of Mathematics and Statistics, Ningbo University, Ningbo, 315211, China}
\email{sunchengquan@nbu.edu.cn}

\thanks{Supported by the National Natural Science Foundation
of China (12371032, 12431002)}

\begin{abstract}
We study the center of the quantum Schur superalgebra $\mathcal{S}_v(m|n,r)$ associated with the general linear Lie superalgebra $\mathfrak{gl}_{m|n}$.
Using the super Schur--Weyl duality due to Mitsuhashi between the quantum supergroup $U_v(\mathfrak{gl}_{m|n})$ and the Hecke algebra $\mathcal{H}_v(\mathfrak{S}_r)$,
we transfer two known bases of the center of $\mathcal{H}_v(\mathfrak{S}_r)$, namely the Geck--Rouquier basis and the Jones basis, to the center of $\mathcal{S}_v(m|n,r)$.
This yields two distinct bases for $\mathscr{Z}(\mathcal{S}_v(m|n,r))$, indexed respectively by the symmetrized hook set
$H^{\vee}(m|n,r):=H(\min(m,n)\mid \max(m,n),r)$ and by the full $(m|n)$-hook set $H(m|n,r)$.
Our approach relies on a detailed analysis of the ring $\Lambda_{m|n}$ of doubly symmetric polynomials satisfying $f|_{x_m=t=-y_n}$ independent of $t$, and of its power-sum bases.
\end{abstract}

\sloppy \maketitle
\section{Introduction}
The study of Lie superalgebras as a natural extension of Lie algebras was originally motivated by supersymmetry in mathematical physics.
As the supersymmetric counterparts of quantum groups, quantum supergroups are intimately connected with supersymmetric integrable lattice models and conformal field theories.
The centers of algebras arising from quantum groups and Hecke algebras have long been a central theme, and the center of a quantum superalgebra is of particular interest for its mathematical depth and direct physical relevance.
The centers of quantum supergroups were investigated in \cite{BY,LWY}, while the  center of the Hecke algebra $\mathcal{H}_v(\mathfrak{S}_r)$ has received extensive attention  (see, e.g., \cite{GR,Jones,WW}). Notably,  Geck--Rouquier \cite{GR} and Jones \cite{Jones} constructed explicit bases of $\mathscr{Z}(\mathcal{H}_v(\mathfrak{S}_r))$, and Wan--Wang \cite{WW} established a linear isomorphism between $\mathscr{Z}(\mathcal{H}_v(\mathfrak{S}_r))$ and the degree-$r$ homogeneous component $\Lambda_v^r$ of the ring of symmetric functions $\Lambda_v$.
In the classical limit $v=1$, Geetha and Prasad~\cite{GePr} gave a basis of the center of the Schur algebra arising from conjugacy classes in the symmetric group. Subsequently, Fu~\cite{Fu21} employed the results of~\cite{GR,Jones,WW} to construct bases for the center of the quantum Schur algebra $\mathcal{S}_v(n,r)$.

In this paper, we extend these results to the super setting.
However, the extension is not entirely straightforward. The proof of Theorem~\ref{thm:corrected-power-product}  is by no means a routine transfer from the non-super case: while the corresponding power-sum basis is straightforward in the non-super setting, in the super setting the naively expected indexing set of $(m|n)$-hook partitions of $r$, $H(m|n,r)$, fails when $m>n$; the correct indexing set is $H^\vee(m|n,r):=H(a|b,r)$, where $a=\min(m,n)$ and $b=\max(m,n)$. To prove this, we first handle the case $m\le n$, which demands a delicate analysis of the highest $x$-degree terms for the degree-$r$ homogeneous component $\Lambda_{m|n}^r$ of the ring $\Lambda_{m|n}$ of doubly symmetric polynomials satisfying $f|_{x_m=t=-y_n}$ independent of $t$: we prove the linear independence of a carefully chosen family of vectors, and then, since the dimension of $\Lambda_{m|n}^r$ is known to be the number of $(m|n)$-hook partitions of $r$, we conclude that this family forms a basis. The case $m>n$ is then obtained from the case $m\le n$ via the linear isomorphism $\tau:\Lambda_{n|m}^r\cong \Lambda_{m|n}^r$.
The subtlety of the indexing issue is illustrated by the explicit dependence example in Remark~\ref{Rem}, where using $H(m|n,r)$ with $m>n$ fails.

Let $U_v(\mathfrak{gl}_{m|n})$ be the quantum supergroup associated with the general linear Lie superalgebra $\mathfrak{gl}_{m|n}$, and let $\mathcal{S}_v(m|n,r)$ be the quantum Schur superalgebra, which is the image of $U_v(\mathfrak{gl}_{m|n})$ in the endomorphism algebra of the $r$-fold tensor power of the natural representation.
The super Schur--Weyl duality established by Mitsuhashi \cite{Mi} provides a double centraliser property between $U_v(\mathfrak{gl}_{m|n})$ and the Hecke algebra $\mathcal{H}_v(\mathfrak{S}_r)$ acting on the same tensor space.
This duality allows us to relate the center of the quantum Schur superalgebra to the center of the Hecke algebra.

Our main contribution is the construction of two explicit bases for the center $\mathscr{Z}(\mathcal{S}_v(m|n,r))$.
Using the isomorphism between $\mathscr{Z}(\mathcal{H}_v(\mathfrak{S}_r))$ and $\Lambda_v^r$ due to Wan and Wang \cite{WW},  we show that,
when restricted to the appropriate indexing sets, the images under the duality map $\eta_r$ of the Geck--Rouquier basis $\{f_\lambda^*\}$ and the Jones basis $\{\mathcal{N}_\lambda(F_\lambda)\}$ form bases of $\mathscr{Z}(\mathcal{S}_v(m|n,r))$.
The index sets are:
\begin{itemize}[nosep]
\item for the Jones basis:
\(H^{\vee}(m|n,r)\);
\item for the Geck--Rouquier basis, we give two variants, namely one indexed by \(H^{\vee}(m|n,r)\) obtained via integral forms and specialization at \(v=1\), and another indexed by the full hook set \(H(m|n,r)\) obtained via a power-series embedding.
\end{itemize}
The proof of the basis property for the Jones basis relies on the power-sum basis of $\Lambda_{m|n}^r$ established in Theorem~\ref{thm:corrected-power-product}.
For the Geck--Rouquier basis, we use two different techniques: one uses an integral form of the Hecke algebra and reduces to the classical symmetric group case, the other uses a formal power-series argument that avoids integral forms and yields the full hook set as index set.

Kemp and Ramgoolam  \cite{KempRamgoolam2020} showed that, via Schur--Weyl duality, the problem of distinguishing half-BPS states in \(\mathcal{N}=4\) SYM translates into a structural problem about the center of the symmetric group algebra. This suggests that the study of centers of algebras arising from Schur--Weyl duality is of physical interest. The explicit bases for the center of the quantum Schur superalgebra established in this paper may provide algebraic data relevant to supersymmetric physical settings.

The paper is organized as follows.
In \S2 we recall the definition of the quantum supergroup $U_v(\mathfrak{gl}_{m|n})$, the natural module and its tensor powers, and the quantum Schur superalgebra, together with the super Schur--Weyl duality due to Mitsuhashi.
\S3 is devoted to the doubly symmetric polynomial subring $\Lambda_{m|n}$.
We establish power-sum bases of its homogeneous components $\Lambda_{m|n}^r$ (Theorem~\ref{thm:corrected-power-product}),  which are crucial for the later transfer arguments.
In \S4, we prove that the center of $\mathcal{S}_v(m|n,r)$ consists only of even elements, so the ordinary center equals the supercenter.
Then we recall the Geck--Rouquier and Jones bases of $\mathscr{Z}(\mathcal{H}_v(\mathfrak{S}_r))$ and the isomorphism between this center and $\Lambda_v^r$ due to Wan and Wang.
We then transfer these bases to the quantum Schur superalgebra, obtaining our main theorems (Theorems~ \ref{Jones basis}, \ref{basis of Sqmnr} and \ref{GR basis of quantum Schur superalgebra}).
Finally, we specialise to $v=1$ and describe bases for the center of the classical Schur superalgebra (Theorem~\ref{classical case}).

\section{Quantum supergroup and quantum Schur superalgebra}
In this section we introduce the quantum supergroup $U_v(\mathfrak{gl}_{m|n})$, its natural module and tensor representations, and define the quantum Schur superalgebra $\mathcal{S}_v(m|n,r)$ as the image of $U_v(\mathfrak{gl}_{m|n})$ in $\operatorname{End}(V_v^{\otimes r})$. We then recall the super Schur--Weyl duality, which will be the main tool for relating the center of the quantum Schur superalgebra to that of the Hecke algebra.
\subsection{The quantum supergroup $\bU$}
We begin by fixing the parity convention. For $m,n\in\mbn$  define the map
$\hb{\ } : \{ 1, 2, \dots ,m+n \} \ra \mbz_2$ by setting
$$ \hi =
\begin{cases}
\os{0} & \mbox{if} \ 1 \leq i \leq m,
\\ \os{1} & \mbox{if} \ m+1 \leq i \leq m+n.
\end{cases}   $$
Let $\sQ$ be the field of rational functions in an indeterminate $v$. For $1\leq i\leq m+n$ let
$\up_{i}=\up^{(-1)^{\hi}}$.

Following \cite{ZR},
we recall the definition of the quantum superalgebra $\bU$ over $\sQ$.
We also recall the definition of the super commutator on homogeneous elements $X,Y$ of a superalgebra with parity function $\hb{ \ }$ given by
$ [X,Y]=XY-(-1)^{\hb{X} \hb{Y}}YX .$

\begin{Def}\label{Def. of U}
The quantum superalgebra $\bU$ over $\sQ$  is the algebra generated by
 the elements
$$\ttE_{i,i+1},\ \ttE_{i+1,i} , \ \ttK_j,\ \ttK_j^{-1}\ (1\leq i\leq m+n-1, \ 1\leq j\leq m+n)$$
which satisfy the following relations:
\begin{itemize}
\item[(a)] $\ttK_{i}\ttK_{j}=\ttK_{j}\ttK_{i},\ \ttK_{i}\ttK_{i}^{-1}=1;$
\item[(b)] $ \ttK_{i}\ttE_{j,j+1}=\up_{i}^{\dt_{i,j}-\dt_{i,j+1}} \ttE_{j,j+1}\ttK_{i},$
$\ttK_{i}\ttE_{j+1,j}=\up_{i}^{-\dt_{i,j}+\dt_{i,j+1}}
 \ttE_{j+1,j}\ttK_{i};$
\item[(c)] $\ttE_{i,i+1}\ttE_{j,j+1}=\ttE_{j,j+1}\ttE_{i,i+1},
\ \ttE_{i+1,i} \ttE_{j+1,j}=\ttE_{j+1,j} E_{i+1,i}  \ \mbox{when} \ |i-j|>1;$
\item[(d)]
$[\ttE_{i,i+1},\ttE_{j+1,j}]=\dt_{ij}\frac
{\ttK_{i}\ttK_{i+1}^{-1}-\ttK_{i}^{-1}\ttK_{i+1}}{\up_i-\up_i^{-1}};$
\item[(e)] for $i \neq m$ and $|i-j|=1$,
\begin{equation*}
\begin{split}
& \ttE_{i,i+1}^2\ttE_{j,j+1}-(\up_i+\up_i^{-1})\ttE_{i,i+1}\ttE_{j,j+1}\ttE_{i,i+1}+
\ttE_{j,j+1}\ttE_{i,i+1}^2=0,\\
& \ttE_{i+1,i}^2\ttE_{j+1,j}-(\up_i+\up_i^{-1})\ttE_{i+1,i}\ttE_{j+1,j}\ttE_{i+1,i}+
\ttE_{j+1,j}\ttE_{i+1,i}^2=0;
\end{split}
\end{equation*}
\item[(f)]
$\ttE_{m,m+1}^2=\ttE_{m+1,m}^2=0$ and $[\ttE_{m,m+1},\ttE_{m-1,m+2}]=[\ttE_{m+1,m},\ttE_{m+2,m-1}]=0$.
\end{itemize}
Here, only  $\ttE_{m,m+1}$ and $\ttE_{m+1,m}$ are odd generators and others are even generators.
The quantum root vectors $\ttE_{i,j}$ are defined recursively as follows:
$$\ttE_{i,j} =
\begin{cases}
\ttE_{i,k}\ttE_{k,j}-\up_k \ttE_{k,j}\ttE_{i,k} & \mbox{if} \ i>j, \\
\ttE_{i,k}\ttE_{k,j}-\up_k^{-1} \ttE_{k,j}\ttE_{i,k} & \mbox{if} \ i<j,
\end{cases} $$
where $k$ can be taken to be an arbitrary index strictly between $i$ and $j$.
\end{Def}

\subsection{The natural module and tensor representations}
Let $V_\up$ be the vector superspace over $\mbc(\up)$ with dimension vector $\bfdim(V_\up)=(m|n)$. Fix a basis $v_1, \dots, v_{m+n}$, and define the $\mbz_2$-grading by $\pa v_a=\pa a$. The space  $V_\up$ admits a natural $\Uqglmn$-module structure,
 given on the basis by
\begin{equation}\label{Z}
 \ttK_a v_b  =\up_a^{\delta_{a, b}} v_b, \quad
 \ttE_a v_b  =\delta_{a+1, b} v_a, \quad
 \ttF_a v_b   =\delta_{a, b} v_{a+1},
\end{equation}
where $\ttE_a=\ttE_{a,a+1}$ and $\ttF_a=\ttE_{a+1,a}$.

To obtain representations on tensor powers, we equip $\Uqglmn$ with a comultiplication
defined on generators by
\begin{equation}\label{comul}
\begin{aligned}
\Delta\left(\ttE_a\right) & =\ttE_a \otimes \ttK_a^{-1} \ttK_{a+1}+1 \otimes \ttE_a, \\
\Delta\left(\ttF_a\right) & =\ttF_a \otimes 1+\ttK_a \ttK_{a+1}^{-1} \otimes \ttF_a, \\
\Delta\left(\ttK_a\right) & =\ttK_a \otimes \ttK_a .
\end{aligned}
\end{equation}
Via this comultiplication, for any $r\geq 1$, the $r$-fold tensor product of the natural module,
$
V_\up^{\otimes r}:=V_\up \otimes V_\up \otimes \dots \otimes V_\up
$
admits an action of $\Uqglmn$. Consequently, we obtain a  superalgebra homomorphism:
\begin{equation}\label{Zr}
\Zr: \Uqglmn \rightarrow \End_{\mbc(\up)} (V_\up^{\otimes r}).
\end{equation}
The image
$$
\Sqmnr=\Zr(\Uqglmn)
$$
is called the quantum Schur superalgebra (see \cite{EK,Mi}).

\subsection{Hecke algebras and super Schur--Weyl duality}
The  Hecke algebra $\bfHr$
associated with the symmetric group $\fSr$  is  the $\mbc(\up)$-algebra
generated by
$T_i$ ($1\leq i\leq r-1$),
subject to the following
relations:
\begin{equation}\label{Hecke algebra}
\aligned
 & (T_i+1)(T_i-q)=0,\;\;
 T_iT_{i+1}T_i=T_{i+1}T_iT_{i+1},\;\;T_iT_j=T_jT_i\;(|i-j|>1),
 \endaligned
\end{equation}
where $q=v^2$.

For each \(w\in\mathfrak S_r\), take any reduced expression \(w=s_{i_1}\cdots s_{i_\ell}\) with \(s_i=(i,i+1)\), and set \(T_w:=T_{i_1}\cdots T_{i_\ell}\); the defining relations of the Hecke algebra ensure this is well-defined. Then \(\{T_w\mid w\in\mathfrak S_r\}\) is the standard \(\mbc(v)\)-basis of \(\mathcal H_v(\mathfrak S_r)\).

There is a natural signed action of $\bfHr$ on $V_\up^{\otimes r}$, which gives a homomorphism
$$
\Er: \bfHr^{\operatorname{op}} \rightarrow \End_{\mbc(\up)} (V_\up^{\otimes r}),
$$
(see \cite{Mi}).

The following fundamental result, due to Mitsuhashi \cite{Mi}, establishes the Schur--Weyl duality between these two actions.
\begin{Thm}\label{doublecentralizer}
The actions of $\Uqglmn$ and $\bfHr$ on $V_\up^{\otimes r}$ commute. Moreover, we have
 $\Sqmnr = \End_{\bfHr} (V_\up^{\otimes r})$ and $\Er(\bfHr^{\operatorname{op}}) = \End_{\Uqglmn} (V_\up^{\otimes r})$.
\end{Thm}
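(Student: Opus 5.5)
This theorem is Mitsuhashi's super Schur--Weyl duality. The plan is the standard double-centralizer argument, run in two steps: first commutation on generators, then semisimplicity plus a dimension count.

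\emph{Step 1: the two actions commute.} Since $\Uqglmn$ is generated by $\ttE_a,\ttF_a,\ttK_a^{\pm1}$, and $\bfHr$ by the $T_i$, it suffices to check that each $\Zr(X)$ commutes with each $\Er(T_i)$. The element $T_i$ acts only on tensor positions $i,i+1$, by the signed $R$-matrix. On $v_a\otimes v_b$ it is
\begin{itemize}
\item a signed scalar ($v_a^2$-type, up to sign) if $a=b$;
\item a signed flip if $a<b$;
\item the flip plus a correction $(q-1)\,v_a\otimes v_b$ if $a>b$.
\end{itemize}
By coassociativity, $\Delta^{(r-1)}(X)$ factors as (stuff on positions $<i$)$\,\otimes\,\Delta(X)$-type term on positions $i,i+1\,\otimes\,$(stuff on positions $>i+1$). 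The outer factors are either $\ttK$'s or identities, which are diagonal and preserve weights, so the verification reduces to $r=2$. There one checks directly on the basis $v_a\otimes v_b$ that $\Delta(\ttE_c)$, $\Delta(\ttF_c)$ and $\Delta(\ttK_c)$ commute with the $4\times4$ blocks of $T_1$. The delicate cases are the odd generators $\ttE_m,\ttF_m$ and basis vectors with $\bar a\ne\bar b$, where the Koszul signs in the tensor-product action must match the signs built into the $T$-action.

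\emph{Step 2: the two centralizer equalities.} We work over $\mbc(v)$, where $\bfHr$ is split semisimple, so $V_v^{\otimes r}$ is a semisimple $\bfHr$-module. The classical double centralizer theorem then gives
\[
\End_{\End_{\bfHr}(V^{\otimes r})}(V^{\otimes r})=\Er(\bfHr^{\operatorname{op}}).
\]
Step 1 gives the inclusion $\Sqmnr\subseteq\End_{\bfHr}(V_v^{\otimes r})$. So it suffices to prove the reverse inclusion, i.e. that $\Zr$ is surjective onto the centralizer. The second equality then follows, because $\End_{\Uqglmn}(V^{\otimes r})$ equals the bicentralizer of $\Er(\bfHr^{\operatorname{op}})$.

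\emph{Main obstacle: surjectivity.} I would attack it by a specialization argument. Use the $\mbz[v,v^{-1}]$-lattice spanned by the standard tensors $v_{a_1}\otimes\cdots\otimes v_{a_r}$. On it, $\End_{\bfHr}(V^{\otimes r})$ is a free module whose rank equals that of the Schur superalgebra $\Smnr$ at $v=1$. That rank is computed via the decomposition
\[
V^{\otimes r}\cong\bigoplus_\lambda M^\lambda,
\]
a sum of signed permutation modules of $\bfHr$ indexed by compositions. This gives a basis of the centralizer consisting of orbit sums, one for each $\fSr$-orbit on pairs of multi-indices. At $v=1$, the classical result of Berele--Regev/Sergeev identifies the image of $\mathcal U(\glmn)$ with this centralizer.

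To lift, I would show that the images of divided powers and root-vector monomials in $\Zr$ (PBW-type elements) are triangular, with respect to a suitable order, against the orbit-sum basis, with leading coefficients that are units in $\mbz[v,v^{-1}]$. This gives $\dim\Sqmnr\ge\dim\End_{\bfHr}(V^{\otimes r})$, which together with Step 1 forces equality.

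An alternative route to the same dimension equality is to compare multiplicities. Decompose $V^{\otimes r}$ into simple $\bfHr$-modules $S^\lambda$ with $\lambda\in\Hmnr$, and show that the multiplicity spaces are simple and pairwise non-isomorphic $\Uqglmn$-modules, for instance via highest weights given by hook-shape $(m|n)$-weights. Either way, this triangularity or highest-weight identification is the step I expect to be hardest.
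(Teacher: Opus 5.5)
The paper does not prove this theorem. It quotes it from Mitsuhashi \cite{Mi}, together with the bimodule decomposition \eqref{Qdecom}, so your sketch has to stand on its own.

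Your Step~1 is fine. The reduction to $r=2$ really rests on the fact that $\Er(T_i)$ preserves the multiset of indices and the total parity in positions $i,i+1$. Hence it commutes with the $\ttK$-factors and with the Koszul signs of an odd $\ttE_m$ or $\ttF_m$ placed elsewhere. The reduction in Step~2 is also fine: by semisimplicity of $\bfHr$, both equalities follow once $\dim_{\mbc(v)}\Sqmnr\ge\dim_{\mbc(v)}\End_{\bfHr}(V_v^{\otimes r})$.

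\textbf{The gap.} That inequality is the entire content of the theorem, and you do not prove it. The triangularity of PBW-type elements is only announced, with no order, no integral form of $\Uqglmn$ and no leading coefficients. The highest-weight alternative is only named.

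Moreover, the basis you would triangularize against is misdescribed. Suppose $\phi=\sum c_{\mathbf{i},\mathbf{j}}E_{\mathbf{i},\mathbf{j}}$ commutes with the signed action $v_{\mathbf{i}}s_k=(-1)^{\overline{i_k}\,\overline{i_{k+1}}}v_{\mathbf{i}s_k}$. Then $c_{\mathbf{i}s_k,\mathbf{j}s_k}=(-1)^{\overline{i_k}\,\overline{i_{k+1}}+\overline{j_k}\,\overline{j_{k+1}}}c_{\mathbf{i},\mathbf{j}}$. So if $(i_k,j_k)=(i_{k+1},j_{k+1})$ with $\overline{i_k}+\overline{j_k}=\bar 1$, then $c_{\mathbf{i},\mathbf{j}}=-c_{\mathbf{i},\mathbf{j}}=0$, and the whole orbit drops out. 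The centralizer is therefore indexed by matrices whose odd entries are $0$ or $1$, not by all $\fS_r$-orbits on pairs. For $m=n=1$, $r=2$ this gives $8$, not $10$. This also contradicts your own claim that the rank equals $\dim\Smnr$.

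\textbf{How to close it.} The missing inequality follows from semicontinuity at $v=1$, with $\R=\mbc[v]_{(v-1)}$, without any basis or triangularity.

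First, the equations $\phi\,\Er(T_i)=\Er(T_i)\,\phi$ have coefficients in $\mbz[v,v^{-1}]$. At $v=1$ they specialize to the equations for the signed $\fS_r$-action. Rank can only drop under specialization, so $\dim\End_{\bfHr}(V_v^{\otimes r})\le\dim\End_{\mbc\fS_r}(V^{\otimes r})$.

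Second, let $S_\R$ be the $\R$-subalgebra of $\Sqmnr$ generated by $\Zr(\ttE_a)$, $\Zr(\ttF_a)$, $\Zr(\ttK_b^{\pm1})$ and $\Zr\bigl((\ttK_b-1)/(v-1)\bigr)$. All of these preserve the lattice $\bigoplus_{\mathbf{i}}\R\,v_{\mathbf{i}}$; the last acts on a vector of weight $\lambda$ by $(v^{\pm\lambda_b}-1)/(v-1)\in\mbz[v,v^{-1}]$. Hence $S_\R$ is free of rank at most $\dim\Sqmnr$. Its image at $v=1$ is generated by the actions of $E_{a,a+1}$, $E_{a+1,a}$ and $\pm E_{bb}$, so it is $\Smnr$. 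Therefore $\dim\Smnr\le\dim\Sqmnr$.

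Now combine these with Berele--Regev/Sergeev ($\Smnr=\End_{\mbc\fS_r}(V^{\otimes r})$) and your Step~1. This gives a closed chain of inequalities, which forces $\Sqmnr=\End_{\bfHr}(V_v^{\otimes r})$.
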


For a partition $\lambda$ of $r$, we denote by $S_\up^\lambda$ the Specht module of $\bfHr$. It is well known that $\left\{S_\up^\lambda \mid \lambda \in \Par \right\}$ forms a complete list of  pairwise non-isomorphic finite dimensional irreducible $\bfHr$-modules.
According to \cite[Th. 5.1]{Mi} and \cite[Th. 3.3]{Mi1}, we have the following $\Uqglmn$-$\bfHr$-bimodule decomposition:
\begin{equation}\label{Qdecom}
  V_\up^{\otimes r}=\bigoplus_{\la \in \Hmnr} L_\up(\la)\otimes S_\up ^\la,
\end{equation}
where $L_\up(\la)=\Hom_{\bfHr}(S_\up ^\la,V_v^{\ot r})$ $(\la \in \Hmnr)$ are precisely all irreducible non-isomorphic $\bfSr$-modules. This decomposition will be crucial in the subsequent analysis of the center.

\section{ Power-sum bases of the ring $\tiLamn$}
In order to transfer the known bases of the Hecke algebra center to the quantum Schur superalgebra, we need a detailed understanding of the subring $\Lambda_{m|n}$ of doubly symmetric polynomials satisfying $f|_{x_m=t=-y_n}$ independent of $t$, and of its power-sum bases.
\subsection{The ring $\tiLamn$}
Let $\msPr$ denote the set of partitions of $r$, and $\msP=\bin_{r\geq 0}\msPr$.
A partition $\la=(\la_1, \la_2, \dots)$ is called an $(m|n)$-hook partition if $\la_{m+1} \leq n$. We denote by $\Hmn$ the set of all $(m | n)$-hook partitions, and set $\Hmnr=\msPr\jiao \Hmn$.

Let $\Ga$ be the ring of symmetric functions in countably many independent variables $x=\{x_1,x_2,\dots\}$ over $\mbc$.
For $r\geq 0$ let  $\Ga^r$ be the subspace of $\Ga$ consisting of the homogeneous symmetric polynomials of degree $r$, together with the zero polynomial.
For $k\geq 1$ let
$$p_k(x)=\sum_{i\geq 1}x_i^k\in\Ga^k.$$
For $\la\in\msPr$ let
\begin{equation}\label{pla}
 p_\la(x)=\prod_{k\geq 1}p_{\la_k}(x)\in\Ga^r.
\end{equation}

Let $\left\{x_1, x_2, \dots, x_m\right\}$ and $\left\{y_1, y_2, \dots, y_n\right\}$ be
two sets of independent indeterminates. A polynomial $f$ in $\mbc[x_1, x_2, \dots, x_m, y_1, y_2, \dots, y_n]$ is called doubly symmetric if the following conditions are satisfied:
\begin{itemize}
\item[(1)] $f$ is symmetric in $x_1, \dots, x_m$.
\item[(2)] $f$ is symmetric in $y_1, \dots, y_n$.
\end{itemize}
Define $\tiLamn$ to be the subring of the ring of doubly symmetric polynomials over $\mbc$ in the two sets of variables $\left\{x_1, \dots, x_m\right\}$ and $\left\{y_1, \dots, y_n\right\}$ consisting of polynomials $f$ such that the specialization $\left.f\right|_{x_m=t=-y_n}$ is independent of $t$.
There is a ring epimorphism
\begin{equation}\label{Phimn}
 \Phimn:\La\ra\tiLamn
\end{equation}
given on power sums by
$$
 \Phimn (p_r(x) )=p_r(x_1,\dots,x_m)+(-1)^{r-1}p_r(y_1,\dots,y_n)
$$ where $p_r(x_1,\dots,x_m)=\sum_{i=1}^m x_i^r$ and $p_r(y_1,\dots,y_n)=\sum_{j=1}^n y_j^r
$
(see \cite[A.2.2]{CW}).

For $\la\in\msPr$, let $\tts_\la(x)\in\Ga^r$ be the corresponding Schur function.
For $\la\in\Hmnr$ let $\hs_\lambda(x_1,\dots,x_m ; y_1,\dots,y_n)$ be the corresponding super Schur function.
For $r\geq 0$ let  $\tiLamnr$ be the subspace of $\tiLamn$ consisting of the homogeneous polynomials of degree $r$, together with the zero polynomial.

The following result can be found in \cite[Lemma 6.4]{BR}, \cite{Se} and \cite[A.2.2]{CW}.
\begin{Thm}\label{super Schur function}
For $\la\in\msPr$ we have
$$\Phimn(\tts_\lambda(x))=
\begin{cases}
\hs_\la(x_1,\dots,x_m ; y_1,\dots,y_n)& \text{if $\la\in\Hmnr$}\\
0&\text{otherwise}.
\end{cases}
$$
Furthermore the set
$\{\hs_\lambda(x_1,x_2,\dots,x_m ; y_1,y_2,\dots,y_n) \mid \la \in \Hmnr \}$ forms a $\mbc$-basis of $\tiLamnr$.
\end{Thm}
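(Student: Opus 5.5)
We would prove Theorem~\ref{super Schur function} along the classical route: combinatorial definition of $\hs_\la$, an expansion formula for $\Phimn(s_\la)$, and a leading-term argument for the basis.

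\textbf{Step 1: identify $\Phimn$ and expand $\Phimn(s_\la)$.} We take $\hs_\la$ to be defined combinatorially, as a sum over $(m|n)$-semistandard tableaux. In these tableaux the entries $1,\dots,m$ weakly increase along rows and strictly down columns, while the entries $m+1,\dots,m+n$ strictly increase along rows and weakly down columns. Equivalently,
\[
\hs_\la=\sum_{\mu\subseteq\la}s_\mu(x_1,\dots,x_m)\,s_{\la'/\mu'}(y_1,\dots,y_n).
\]
On power sums, $\Phimn$ is the composite of the coproduct $\Delta:\La\to\La\otimes\La$ (with $\Delta p_r=p_r\otimes1+1\otimes p_r$) and the map $\mathrm{id}\otimes\omega$ (with $\omega(p_r)=(-1)^{r-1}p_r$), followed by restriction to $x_1,\dots,x_m$ and $y_1,\dots,y_n$. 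All of these are ring maps, so $\Phimn$ agrees with this composite on all of $\La$. We then use $\Delta s_\la=\sum_\mu s_\mu\otimes s_{\la/\mu}$ and $\omega s_{\la/\mu}=s_{\la'/\mu'}$. This gives $\Phimn(s_\la)=\sum_\mu s_\mu(x_1,\dots,x_m)\,s_{\la'/\mu'}(y_1,\dots,y_n)$.

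\textbf{Step 2: vanishing outside $\Hmnr$.} A term in the sum is nonzero only if $\ell(\mu)\le m$ and $\la'/\mu'$ has all columns of length at most $n$. The second condition says every row of $\la/\mu$ has length at most $n$. If $\la_{m+1}>n$, these two conditions are incompatible: $\mu_{m+1}=0$, so row $m+1$ of $\la/\mu$ has length $\la_{m+1}>n$. Hence $\Phimn(s_\la)=0$. Otherwise the sum is exactly $\hs_\la$. The image lies in $\tiLamn$: it is doubly symmetric, and $\Phimn(p_r)|_{x_m=t=-y_n}$ is independent of $t$ because $t^r+(-1)^{r-1}(-t)^r=0$. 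Since $\Phimn$ is a ring map, this cancellation passes to all of its image.

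\textbf{Step 3: the basis statement.} Spanning of $\tiLamnr$ by the $\hs_\la$ amounts to surjectivity of $\Phimn$, which \cite[A.2.2]{CW} asserts. To prove it directly, we would take $f\in\tiLamnr$ and induct on $m+n$. Setting $x_m=t=-y_n$ gives an element of $\Lambda^r_{m-1|n-1}$, which by induction equals $\Phi_{m-1|n-1}(g)$ for some $g\in\La^r$. The difference $f-\Phimn(g)$ then vanishes on $x_m=-y_n$, so by symmetry it is divisible by $\prod_{i,j}(x_i+y_j)$. We then use the factorization $\hs_{(n^m)+\alpha\cup\beta'}=\prod_{i,j}(x_i+y_j)\,s_\alpha(x)\,s_\beta(y)$, the Berele--Regev factorization for partitions containing the $m\times n$ rectangle. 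This exhibits the quotient as a combination of super Schur functions, and we finish by induction on degree.

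For linear independence we use leading terms. Order monomials by $x$-degree first and then lexicographically. The leading monomial of $\hs_\la$ is $x^{\tilde\la}y^{\nu}$, where $\tilde\la=(\la_1,\dots,\la_m)$ and $\nu$ is the conjugate of $(\la_{m+1},\la_{m+2},\dots)$, which makes sense because $\la_{m+1}\le n$. This leading monomial determines $\la$, so the family is triangular and hence independent.

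\textbf{Main obstacle.} We expect the spanning direction to be the main obstacle, specifically the divisibility step and the rectangle factorization. This is exactly where the cited results \cite[Lemma 6.4]{BR} and \cite{Se} do the work, and we would quote them.
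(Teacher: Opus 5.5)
Your proof is correct. It is not really comparable to the paper's, because the paper proves nothing here: it simply quotes \cite[Lemma 6.4]{BR}, \cite{Se} and \cite[A.2.2]{CW}. What you give is a self-contained reconstruction of the standard argument. It rests only on three classical inputs: the skew coproduct $\Delta s_\lambda=\sum_\mu s_\mu\otimes s_{\lambda/\mu}$, the rule $\omega(s_{\lambda/\mu})=s_{\lambda'/\mu'}$, and the Berele--Regev rectangle factorization.

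The parts check out as follows.
\begin{itemize}
\item Writing $\Phimn$ as restriction $\circ(\mathrm{id}\otimes\omega)\circ\Delta$ gives the formula $\Phimn(s_\lambda)=\sum_{\mu}s_\mu(x)s_{\lambda'/\mu'}(y)$. It also gives the vanishing for $\lambda_{m+1}>n$ in one stroke, since $\mu_{m+1}=0$ forces a row of $\lambda/\mu$ longer than $n$.
\item The induction on $m+n$ proves surjectivity of $\Phimn$ directly. It uses that $f|_{x_m=0=y_n}\in\Lambda_{m-1|n-1}$ (by symmetry under swapping $x_{m-1},x_m$ and $y_{n-1},y_n$), and that $\Phimn(g)|_{x_m=0=y_n}=\Phi_{m-1|n-1}(g)$.
\item The top-$x$-degree component of $\hs_\lambda$ is exactly $s_{(\lambda_1,\dots,\lambda_m)}(x)\,s_{(\lambda_{m+1},\lambda_{m+2},\dots)'}(y)$. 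Hence the leading monomial $x^{(\lambda_1,\dots,\lambda_m)}y^{(\lambda_{m+1},\lambda_{m+2},\dots)'}$ recovers $\lambda$, which gives independence.
\end{itemize}
In particular you obtain $\dim_\C\Lamnr=\#\Hmnr$. This is precisely the fact the paper later extracts from this theorem in the proof of Theorem~\ref{thm:corrected-power-product}.

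Two small points. First, after dividing by $\prod_{i,j}(x_i+y_j)$, the quotient is an arbitrary doubly symmetric polynomial. Expanding it in the products $s_\alpha(x)s_\beta(y)$ with $\ell(\alpha)\le m$, $\ell(\beta)\le n$ and applying the factorization finishes the spanning step immediately, so no further induction on degree is needed. Second, you should state the base case of the induction explicitly. When $m=0$ or $n=0$ there is no cancellation condition, and $\Phimn$ is restriction, possibly composed with the automorphism $\omega$, hence surjective onto the symmetric polynomials.
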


For $\la\in\msPr$ let $m_\la(x)\in\Ga^r$ be the monomial symmetric function, and define
\begin{equation}\label{hmla}
 \hm_\la(x_1,\dots,x_m ; y_1,\dots,y_n)=\Phimn(m_\la(x)).
\end{equation}
Let $\ud$ denote the dominance order on $\msPr$.
\begin{Lem}\label{hm_lambda}
The set
$\{\hm_\lambda(x_1,\dots,x_m ; y_1,\dots,y_n) \mid \la \in \Hmnr \}$ forms a $\mbc$-basis for $\tiLamnr$.
\end{Lem}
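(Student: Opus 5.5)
The plan is to transfer the triangularity between Schur and monomial functions through the epimorphism $\Phimn$, then count dimensions.

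First recall the classical unitriangularity in $\Ga^r$:
$$\tts_\la(x)=m_\la(x)+\sum_{\mu\lhd\la}K_{\la\mu}\,m_\mu(x),$$
where the $K_{\la\mu}$ are Kostka numbers. Inverting this gives
$$m_\la(x)=\tts_\la(x)+\sum_{\mu\lhd\la}c_{\la\mu}\,\tts_\mu(x)$$
for suitable constants $c_{\la\mu}$. Apply $\Phimn$ and use Theorem~\ref{super Schur function}. For every $\la\in\msPr$ this yields
$$\hm_\la=\Phimn(\tts_\la)+\sum_{\mu\lhd\la,\ \mu\in\Hmnr}c_{\la\mu}\,\hs_\mu,$$
where $\Phimn(\tts_\la)=\hs_\la$ if $\la\in\Hmnr$. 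Each term $\tts_\mu$ with $\mu\notin\Hmnr$ is killed by $\Phimn$.

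Restrict now to $\la\in\Hmnr$ and order $\Hmnr$ by any linear extension of $\ud$. The transition matrix from $\{\hs_\la\mid \la\in\Hmnr\}$ to $\{\hm_\la\mid\la\in\Hmnr\}$ is then unitriangular, hence invertible. By Theorem~\ref{super Schur function}, the set $\{\hs_\la\mid\la\in\Hmnr\}$ is a basis of $\tiLamnr$. It follows that $\{\hm_\la\mid\la\in\Hmnr\}$ spans the same space and is linearly independent. So it is a basis of $\tiLamnr$.

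One point to check is that the index set is closed under the passage we use. It is not true that $\mu\lhd\la$ with $\la$ a hook forces $\mu$ to be a hook; for instance, smaller partitions in dominance order can be longer. This causes no problem, because the non-hook $\mu$ terms vanish under $\Phimn$. Hence only hook $\mu\lhd\la$ survive, and the triangularity within $\Hmnr$ is preserved.

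The main obstacle is only this bookkeeping about non-hook terms. Otherwise the argument is routine linear algebra once Theorem~\ref{super Schur function} is available. We do not need to know whether $\hm_\la=0$ for non-hook $\la$, since only $\la\in\Hmnr$ is considered.
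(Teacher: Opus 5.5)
Your proof is correct and follows the paper's own proof: you invert the Kostka unitriangularity, apply $\Phimn$ so that Theorem~\ref{super Schur function} kills the non-hook $\tts_\mu$, and conclude from the unitriangular transition matrix to the basis $\{\hs_\la\mid\la\in\Hmnr\}$. Your remark that non-hook $\mu\lhd\la$ can occur but vanish under $\Phimn$ is a correct clarification of a point the paper leaves implicit.
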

\begin{proof}
By \cite[I,(6.5)]{Macd}, for $\la\in\msPr$ we have
\begin{equation}\label{s_lam_mu}
  \tts_\la(x)=\sum_{\mu\ud \la,\,\mu\in \Par }K_{\la,\mu}m_\mu(x),
\end{equation}
where $K_{\la,\mu}$ are Kostka numbers. Consequently,
\begin{equation*}
 m_\la(x)=\sum_{\mu\ud \la,\,\mu\in \Par }f_{\la,\mu}\tts_\mu(x),
\end{equation*}
for some $f_{\la,\mu}\in\mbz$.  Applying $\Phimn$ and using Theorem \ref{super Schur function}, we obtain for $\la\in\Hmnr$
\begin{equation*}
 \hm_\la(x_1,\dots,x_m ; y_1,\dots,y_n)=\sum_{\mu\ud \la,\,\mu\in \Hmnr }f_{\la,\mu}\hs_\mu(x_1,\dots,x_m ; y_1,\dots,y_n).
\end{equation*}
The transition matrix from the super Schur functions to the super monomial symmetric functions is unitriangular with respect to the dominance order.
Therefore the set $\{\hm_\lambda(x_1,\dots,x_m ; y_1,\dots,y_n) \mid \la \in \Hmnr \}$ forms a $\mbc$-basis of $\tiLamnr$.
\end{proof}

 \subsection{The doubly symmetric polynomial ring $R$}
For $k\geq 1$ let
\[
        \hpk=\Phimn(p_k(x))=\sum_{1\leq i\leq m}x_i^r+(-1)^{r-1}\sum_{1\leq j\leq n}y_j^r
\]
where   $\Phimn$ is defined in \eqref{Phimn}. For a partition $\lambda\in\msP$,
define
\[
        \hpla=\prod_{k\geq 1} \hp_{\lambda_k}(x_1,\dots,x_m;y_1,\dots,y_n).
        \]
For $1\le j\le n$
set
\[
        z_j=-y_j.
\]
Then for  $k\geq 1$
$$\sigma_k:=\hpk=p_k(x_1,\dots,x_m)-p_k(z_1,\dots,z_n)$$
and for $\la\in\msPr$
\[
 \sigma_\lambda:=\prod_{k\geq 1}\sg_{\la_k}=\hpla.
\]

Let $$\Ga_m=\mbc[x_1,x_2,\dots,x_m]^{\frak{S}_m}$$ be the ring of symmetric functions in $m$ independent variables $x_1,x_2,\dots,x_m$.
Then $$\Ga_m=\bigoplus_{r\geq 0}\Ga_m^r,$$ where
$\Ga_m^r$ consists of the homogeneous symmetric polynomials of degree $r$, together with the zero polynomial.

Let
\[
        R=\C[x_1,\dots,x_m,z_1,\dots,z_n]^{\Sm\times\Sn}
\]
be the doubly symmetric polynomial ring.  For $1\le i\le m$ and
$1\le j\le n$ let
\[X_i=p_i(x_1,\dots,x_m),\quad Z_j=p_j(z_1,\dots,z_n).
\]
Then we have
\[
        R=\C[X_1,\dots,X_m,Z_1,\dots,Z_n]=\C[X_1,\dots,X_m,\sigma_1,\dots,\sigma_n].
\]
Clearly we have the following result.
\begin{Lem} \label{lem:alg-independence}
The elements $X_1,\dots,X_m,\sigma_1,\dots,\sigma_n$ are algebraically
independent.  In particular,
$$B:=\C[\sigma_1,\dots,\sigma_n]$$
is a polynomial ring, and
$R=B\otimes_\C \Ga_m.$
\end{Lem}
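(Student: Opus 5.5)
Since the statement calls this "clear", the proof will be a short chain of standard facts rather than a new argument. The plan is to show that $X_1,\dots,X_m,Z_1,\dots,Z_n$ are algebraically independent generators of $R$, and then pass to the new generators $\sigma_k$ by a triangular, invertible change of variables.

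\textbf{Step 1: the generators $X_i, Z_j$.} By Newton's identities over $\C$, the power sums $p_1,\dots,p_m$ in $x_1,\dots,x_m$ are algebraically independent and generate $\Ga_m$; similarly $Z_1,\dots,Z_n$ generate $\C[z_1,\dots,z_n]^{\Sn}$. Since $\Sm\times\Sn$ acts on disjoint sets of variables, the invariant ring factors as
\[
R=\C[x]^{\Sm}\otimes_\C\C[z]^{\Sn}.
\]
Hence $X_1,\dots,X_m,Z_1,\dots,Z_n$ are algebraically independent generators of $R$. This is the asserted equality $R=\C[X_1,\dots,X_m,Z_1,\dots,Z_n]$.

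\textbf{Step 2: passing to the $\sigma_k$.} For $1\le k\le n$ we have
\[
\sigma_k = p_k(x_1,\dots,x_m)-Z_k .
\]
When $k\le m$, the first term is $X_k$. When $k>m$, the first term $p_k(x_1,\dots,x_m)$ is a polynomial in $X_1,\dots,X_m$, again by Newton's identities. In either case $\sigma_k=g_k(X_1,\dots,X_m)-Z_k$ for some polynomial $g_k$. Therefore the substitution
\[
(X_1,\dots,X_m,Z_1,\dots,Z_n)\mapsto(X_1,\dots,X_m,\sigma_1,\dots,\sigma_n)
\]
is a triangular polynomial automorphism of $\C[X,Z]$, with inverse $Z_k=g_k(X)-\sigma_k$. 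An automorphism of a polynomial ring sends an algebraically independent generating set to another one. So $X_1,\dots,X_m,\sigma_1,\dots,\sigma_n$ are algebraically independent and generate $R$.

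\textbf{Step 3: conclusion.} Any subset of an algebraically independent set is algebraically independent, so $B=\C[\sigma_1,\dots,\sigma_n]$ is a polynomial ring. The multiplication map $B\otimes_\C \Ga_m\to R$, with $\Ga_m=\C[X_1,\dots,X_m]$, is surjective because these elements generate $R$. It is injective because a polynomial ring in disjoint sets of independent variables is the tensor product of the polynomial rings in each set.

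The only point that needs care is the case $k>m$ in Step 2, where $p_k(x)$ is not itself one of the $X_i$. Writing it as $g_k(X_1,\dots,X_m)$ handles this, and the change of variables stays triangular.
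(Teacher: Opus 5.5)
Your proof is correct and takes the same approach as the paper, which gives no separate proof: it displays $R=\C[X_1,\dots,X_m,Z_1,\dots,Z_n]=\C[X_1,\dots,X_m,\sigma_1,\dots,\sigma_n]$ and calls the lemma clear. Your Steps 1--3 supply the omitted justification, including the one point needing care: for $m<k\le n$ you write $p_k(x_1,\dots,x_m)$ as a polynomial in $X_1,\dots,X_m$, so that $Z_k\mapsto\sigma_k$ is a triangular invertible change of variables.
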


For $f\in R$, write uniquely
\[
        f=\sum_{r\geq 0} f_r,
        \qquad
        f_r\in B\otimes_\C \Ga^r_m.
\]
For $f\not=0\in R$ define
\[
        \degx(f)=\max\{r\mid f_r\ne 0\},
        \qquad
        \Topx(f)=f_{\degx(f)}.
\]

\subsection{Highest $x$-degree term of the $\sg_\al$}
Let
\[
        E_x(t)=\prod_{i=1}^m(1-x_i t),
        \qquad
        E_z(t)=\prod_{j=1}^n(1-z_j t).
\]
Then
\[
        -\ln E_x(t)=\sum_{a\ge 1}\frac{p_a(x_1,\dots,x_m)}{a}t^a,
        \qquad
        -\ln E_z(t)=\sum_{a\ge 1}\frac{p_a(z_1,\dots,z_n)}{a}t^a.
\]
Therefore
\begin{equation}\label{ln frac E_z(t)E_x(t)}
\ln\frac{E_z(t)}{E_x(t)}
=\sum_{a\ge 1}\frac{\sigma_a}{a}t^a.
\end{equation}
Define
\[
        C(t)=\exp\left(\sum_{i=1}^n\frac{\sigma_i t^i}{i}\right)
        =\sum_{j\ge 0}c_jt^j,
        \qquad c_j\in B.
\]

\begin{Lem}\label{lem:congruence}
We have
\[
        E_x(t)C(t)\equiv E_z(t)\pmod{t^{n+1}}.
\]
Equivalently,
\[
        E_x(t)C(t)=E_z(t)+T(t),
        \qquad
        T(t)=\sum_{r\ge n+1}T_rt^r.
\]
\end{Lem}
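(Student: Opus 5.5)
We compare logarithms, working throughout in the formal power series ring $R[[t]]$. Since $E_x(t)$ and $E_z(t)$ have constant term $1$, their logarithms are well defined there, and \eqref{ln frac E_z(t)E_x(t)} gives
\[
\ln E_z(t)-\ln E_x(t)=\sum_{a\ge 1}\frac{\sigma_a}{a}t^a .
\]
Set
\[
S(t)=\sum_{a=1}^{n}\frac{\sigma_a}{a}t^a,
\qquad
S'(t)=\sum_{a\ge n+1}\frac{\sigma_a}{a}t^a,
\]
so that $C(t)=\exp(S(t))$. Exponentiating the identity above gives
\[
E_z(t)=E_x(t)\exp\bigl(S(t)+S'(t)\bigr)=E_x(t)\,C(t)\,\exp\bigl(S'(t)\bigr).
\]
Here we use that $\exp$ turns sums into products for series without constant term in a commutative ring containing $\mathbb{Q}$.

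The key step is that $S'(t)\in t^{n+1}R[[t]]$. Hence $\exp(S'(t))=1+t^{n+1}G(t)$ for some $G(t)\in R[[t]]$, and also $\exp(-S'(t))\equiv 1 \pmod{t^{n+1}}$. Multiplying the displayed identity by $\exp(-S'(t))$ gives
\[
E_x(t)C(t)=E_z(t)\exp\bigl(-S'(t)\bigr)\equiv E_z(t)\pmod{t^{n+1}},
\]
which is the asserted congruence.

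For the equivalent form, define $T(t):=E_x(t)C(t)-E_z(t)$. By the congruence, every coefficient $T_r$ with $r\le n$ vanishes, so $T(t)=\sum_{r\ge n+1}T_rt^r$. The coefficients $T_r$ lie in $R$, because $c_j\in B\subset R$ and the coefficients of $E_x$ and $E_z$ are elementary symmetric polynomials, which lie in $R$.

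None of this is a real obstacle. The only point needing care is that $C(t)$ is a genuine power series (not a polynomial) with coefficients in $B$, which is clear since $\exp$ of a series with zero constant term over a $\mathbb{Q}$-algebra converges $t$-adically.
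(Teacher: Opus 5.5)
Your proof is correct and takes essentially the same approach as the paper. The paper computes $\ln\bigl(E_x(t)C(t)/E_z(t)\bigr)=-\sum_{a\ge n+1}\sigma_a t^a/a\in t^{n+1}R[[t]]$ and then exponentiates; this is your identity $E_x(t)C(t)=E_z(t)\exp(-S'(t))$ written in logarithmic form.
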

\begin{proof}
By \eqref{ln frac E_z(t)E_x(t)} we have
\begin{align*}
        \ln\frac{E_x(t)C(t)}{E_z(t)}
        &=-\ln \frac{E_z(t)}{E_x(t)}+\ln C(t)
         =-\sum_{a\ge 1}\frac{\sigma_a}{a}t^a+\sum_{1\le a\le n}\frac{\sigma_a}{a}t^a=
         -\sum_{a\ge n+1}\frac{\sigma_a}{a}t^a
\end{align*} Hence
$\ln\frac{E_x(t)C(t)}{E_z(t)}\in t^{n+1}R[[t]].$
Since the exponential of a series in $t^{n+1}R[[t]]$ is congruent to $1$ modulo
$t^{n+1}$, the desired congruence follows.
\end{proof}

We use the standard convention that \([t^a]f(t)\) denotes the coefficient of \(t^a\) in a formal power series \(f(t)\).
For $r\geq 0$ let $h_r(x_1,\dots, x_m)$ be the complete symmetric function in
$x_1,\dots,x_m$ and $e_r(x_1,\dots, x_m)$ be the elementary symmetric function in
$x_1,\dots,x_m$.
\begin{Lem}\label{lem:top-one}
 Assume $n\geq m$.  Let
\begin{equation*}\label{the number d}
        d=n-m+1.
\end{equation*}
Then  $c_d\not=0$. Furthermore for $a>n$, we have
$$ \Topx(\sigma_a)
        =a(-1)^{m+1}c_d e_m(x_1,\dots, x_m)h_{a-n-1}(x_1,\dots, x_m).$$
\end{Lem}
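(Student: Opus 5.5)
The plan is to express $\sigma_a$ for $a>n$ as a coefficient of a logarithm, using Lemma~\ref{lem:congruence}, and then read off its top $x$-degree part. The grading is the one implicit in Lemma~\ref{lem:alg-independence}: on $R=B\otimes_\C\Ga_m$, elements of $B$ have $x$-degree $0$ and $\Ga_m^r$ has $x$-degree $r$. Thus $c_j\in B$ has $x$-degree $0$, while $e_i(x_1,\dots,x_m)$ and $h_k(x_1,\dots,x_m)$ are homogeneous of $x$-degree $i$ and $k$.

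First, $c_d\neq 0$. Expanding the exponential, $c_d=\sigma_d/d+(\text{a polynomial in }\sigma_1,\dots,\sigma_{d-1})$. Since $1\le d\le n$ and $\sigma_1,\dots,\sigma_n$ are algebraically independent by Lemma~\ref{lem:alg-independence}, $c_d\neq0$.

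Next, the formula for $\sigma_a$. By Lemma~\ref{lem:congruence}, $E_z=E_xC-T$, so
\[
\frac{E_z(t)}{E_x(t)}=C(t)\bigl(1-u(t)\bigr),\qquad u(t):=\frac{T(t)H(t)}{C(t)},\quad H(t)=\frac1{E_x(t)}=\sum_{k\ge0}h_k(x_1,\dots,x_m)t^k .
\]
Here $C(t)$ has constant term $1$, so it is invertible in $B[[t]]$. Taking logarithms and using \eqref{ln frac E_z(t)E_x(t)}, together with the fact that $\ln C(t)=\sum_{i\le n}\sigma_it^i/i$ has no terms of degree $>n$, we get for $a>n$
\[
\sigma_a=a\,[t^a]\ln(1-u(t))=-a\,[t^a]\Bigl(u+\tfrac{u^2}{2}+\tfrac{u^3}{3}+\cdots\Bigr).
\]

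Now I bound $x$-degrees. Since $T_r=[t^r]E_x(t)C(t)=\sum_{i=0}^{\min(r,m)}(-1)^ie_i(x)c_{r-i}$ for $r\ge n+1$, each $T_r$ has $x$-degree at most $m$. Its degree-$m$ part is $(-1)^me_m(x)c_{r-m}$.

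For $u^j$, the coefficient of $t^a$ is a sum of products $T_{r_1}\cdots T_{r_j}\,h_k\,(\text{coefficients of }C^{-j})$ with $r_1+\dots+r_j+k\le a$ and all $r_i\ge n+1$. So its $x$-degree is at most $jm+a-j(n+1)$. Because $m\le n<n+1$, this bound is strictly decreasing in $j$. Hence only $j=1$ can reach $x$-degree $m+a-n-1$.

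Within $u$, reaching that degree forces $r=n+1$, $k=a-n-1$, the constant term $1$ of $C^{-1}$, and the degree-$m$ part of $T_{n+1}$. Any other choice lowers $k$ (equivalently, raises $r$ or uses a higher coefficient of $C^{-1}$), or uses a part of $T_r$ of $x$-degree below $m$. Since $n+1-m=d$, the component of $\sigma_a$ in $B\otimes\Ga_m^{\,m+a-n-1}$ is
\[
-a(-1)^mc_d\,e_m\,h_{a-n-1}=a(-1)^{m+1}c_d\,e_m(x_1,\dots,x_m)h_{a-n-1}(x_1,\dots,x_m).
\]
All other contributions have strictly smaller $x$-degree.

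Finally, this component is nonzero. We have $c_d\ne0$ in $B$, and $e_mh_{a-n-1}\neq0$ in $\Ga_m$ because $\Ga_m$ is a domain. Since $R=B\otimes_\C\Ga_m$, a pure tensor of nonzero factors is nonzero. So $\degx(\sigma_a)=m+a-n-1$, and $\Topx(\sigma_a)$ is as stated.

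The main obstacle is the degree bookkeeping for the higher powers $u^j$. The point to verify carefully is that each additional factor of $T$ costs $n+1$ in $t$-degree but gains at most $m$ in $x$-degree, which is exactly where the hypothesis $n\ge m$ is used.
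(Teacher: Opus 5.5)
Your proof is correct and follows essentially the same route as the paper: the factorization $E_z/E_x=C(1-U)$ from Lemma~\ref{lem:congruence}, the expansion of $\ln(1-U)$, the bound $\deg_x([t^a]U^k)\le a-kd$, and isolating the degree-$m$ part of $T_{n+1}$ in the $k=1$ term. You also explicitly check that the resulting top component is nonzero, which the paper leaves implicit. One small imprecision: for $j\ge 2$ the $x$-coefficients of $H(t)^j$ are homogeneous of degree $k$ rather than equal to $h_k$, but this does not affect your degree bound.
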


\begin{proof}
Since $n\ge m\ge 1$, we have $1\le d\le n$.
Expanding the exponential gives
\[
        C(t)=1+\sum_{i=1}^n\frac{\sigma_i t^i}{i}
        +\frac{1}{2!}\left(\sum_{i=1}^n\frac{\sigma_i t^i}{i}\right)^2+\dots.
\]
Therefore
\[
        c_d= \frac{\sigma_d}{d}
        +\text{terms of ordinary degree at least }2
        \text{ in }\sigma_1,\dots,\sigma_n.
\]
By Lemma~\ref{lem:alg-independence}, $B=\C[\sigma_1,\dots,\sigma_n]$ is a
polynomial ring.  The linear term $\sigma_d/d$ cannot be cancelled by terms of
ordinary degree at least two.  Hence $c_d\ne 0$.

By Lemma~\ref{lem:congruence},
\[
        \frac{E_z(t)}{E_x(t)}
        =C(t)-\frac{T(t)}{E_x(t)}
        =C(t) (1-U(t) ),
\]
where
\begin{equation}\label{U(t)}
 U(t)=\frac{T(t)}{E_x(t)C(t)}.
\end{equation}
Thus by \eqref{ln frac E_z(t)E_x(t)}
\[
        \sum_{i\ge 1}\frac{\sigma_i}{i}t^i=\ln\frac{E_z(t)}{E_x(t)}=\ln C(t)+\ln(1-U(t))=\sum_{i=1}^n\frac{\sigma_i t^i}{i}+\ln(1-U(t)).
\]
Hence
\[\sum_{i\ge n+1}\frac{\sigma_i}{i}t^i=
\ln(1-U(t))=-U(t)-\frac{U(t)^2}{2}-\frac{U(t)^3}{3}-\dots
\]
Therefore, for $a>n$
\begin{equation}\label{frac sigma a a}
        \frac{\sigma_a}{a}=[t^a]\ln(1-U(t))=-[t^a]U(t)-\sum_{k\geq 2}[t^a]\frac{U(t)^k}{k}.
\end{equation}

We first compute the top $x$-degree part of $[t^a]U(t)$.  Since $E_z(t)$ has
degree $n$ in $t$, by Lemma \ref{lem:congruence}
\[
        T_{b}=[t^{b}]E_x(t)C(t)
\]
for $b\geq n+1$.
Now
\[
        E_x(t)=\sum_{r=0}^m(-1)^r e_r(x)t^r,
        \qquad
        C(t)=\sum_{j\ge 0}c_jt^j.
\]
Therefore
\[
        T_{b}=\sum_{r=0}^m(-1)^r e_r(x_1,\dots,x_m)c_{b-r}
\]
for $b\geq n+1$.
Each $c_{b-r}$ belongs to $B$ and has $x$-degree zero.  Hence
\[
  \deg_x(T_b)\le m
\]
for $b\geq n+1$. Since $c_d\not=0$ we have  $ \deg_x(T_{n+1})=m$ and
$$\Topx(T_{n+1})=(-1)^m c_{d}e_m(x_1,\dots,x_m).$$
Moreover,
\[
        \frac{1}{E_x(t)}=\prod_{i=1}^m\frac{1}{1-x_it}
        =\sum_{r\ge 0}h_r(x_1,\dots,x_m)t^r.
\]
Note that $C(t)^{-1}\in B[[t]]$ has constant term $1$ and all its coefficients have
$x$-degree zero.
Thus, by \eqref{U(t)} we have
\begin{equation}\label{Topx([t^a]U(t))}
        \Topx([t^a]U(t))=(-1)^m c_d e_m(x_1,\dots,x_m)h_{a-n-1}(x_1,\dots,x_m),\quad
        \deg_x([t^a]U(t))=a-d.
\end{equation}

 Write $C(t)^{-1}=\sum_{i\geq 0}(C(t)^{-1})_it^i$. Then we have
\[
        U(t)=T(t)E_x(t)^{-1}C(t)^{-1}=
        \sum_{r\geq n+1}
        \big(\sum_{s\geq n+1,\,j,u\geq 0\atop s+j+u=r}T_s h_j(x_1,\dots,x_m)(C(t)^{-1})_u\big)t^r.
\]
Since $\degx(T_s)\le m$, $\degx(h_j)=j$ and $(C(t)^{-1})_u\in B$, we have
\[
   \degx([t^r]U(t))\leq    m+j=m+r-s-u\le m+r-(n+1)=r-d.
\]
Consequently, for $k\geq 2$ we have $\degx([t^a]U(t)^k)\leq a-kd<a-d$, because $d\ge 1$.  Hence by \eqref{frac sigma a a} and \eqref{Topx([t^a]U(t))}, we have
$\Topx ({\sigma_a} )
        =a(-1)^{m+1}c_d e_m(x_1,\dots,x_m)h_{a-n-1}(x_1,\dots,x_m).$
This completes the proof.
\end{proof}

For $\la\in\msP$ let $\len(\la)$ be the length of $\la$ and $h_\la(x_1,\dots,x_m)=\prod_{i\geq 1}h_{\la_i}(x_1,\dots,x_m)$.
Define
\begin{equation}\label{msPmn}
 \msPmn=\{\alpha\in\msP\mid\alpha_i>n,\,\forall i,\,\ell(\al)\leq m\}.
\end{equation}
\begin{Coro}\label{lem:top-product}
Assume $n\geq m$.
Let $\alpha\in\msPmn$. Then
\[
        \Topx(\sigma_\alpha)
        =K_\alpha c_d^{\ellp}e_m(x_1,\dots,x_m)^{\ellp}
        h_{\rho(\alpha)}(x_1,\dots,x_m),
\]
where $\ellp=\ell(\alpha)$, $\rho(\alpha)=(\alpha_1-n-1,\dots,\alpha_{\ellp}-n-1)$ and
$K_\alpha= (-1)^{\ellp(m+1)} \prod_{i=1}^{\ellp}\alpha_i \in\C^\times.$
\end{Coro}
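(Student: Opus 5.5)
The corollary should follow from Lemma~\ref{lem:top-one} once we know that $\Topx$ is multiplicative on nonzero elements of $R$. The argument is short, and its only delicate point is that multiplicativity.

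First we show $\Topx(fg)=\Topx(f)\Topx(g)$ for nonzero $f,g\in R$. By Lemma~\ref{lem:alg-independence}, $R=B\otimes_\C\Ga_m$ is graded by $x$-degree, with graded pieces $B\otimes_\C\Ga_m^r$. Since $B\otimes_\C\Ga_m^r\cdot B\otimes_\C\Ga_m^s\subseteq B\otimes_\C\Ga_m^{r+s}$, the component of $fg$ in $x$-degree $\degx(f)+\degx(g)$ is exactly $\Topx(f)\Topx(g)$. All higher components vanish. This product is nonzero because $R$ is a subring of a polynomial ring, hence an integral domain. Therefore $\degx(fg)=\degx(f)+\degx(g)$ and $\Topx(fg)=\Topx(f)\Topx(g)$.

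Next, take $\alpha\in\msPmn$ with $\ellp=\ell(\alpha)$. Every part satisfies $\alpha_i>n$, so Lemma~\ref{lem:top-one} applies to each factor and gives
\[
\Topx(\sigma_{\alpha_i})=\alpha_i(-1)^{m+1}c_d\,e_m(x_1,\dots,x_m)\,h_{\alpha_i-n-1}(x_1,\dots,x_m),
\]
which is nonzero since $c_d\neq0$.

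Multiplying these over $i=1,\dots,\ellp$ and using multiplicativity of $\Topx$ gives
\[
\Topx(\sigma_\alpha)=(-1)^{\ellp(m+1)}\Big(\prod_{i=1}^{\ellp}\alpha_i\Big)c_d^{\ellp}e_m(x_1,\dots,x_m)^{\ellp}\prod_{i=1}^{\ellp}h_{\alpha_i-n-1}(x_1,\dots,x_m).
\]
The last product is $h_{\rho(\alpha)}(x_1,\dots,x_m)$ by definition. The constant $K_\alpha$ is a product of nonzero integers and a sign, so $K_\alpha\in\C^\times$.

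The condition $\ell(\alpha)\le m$ in the definition of $\msPmn$ is not used here. Presumably it matters only in later independence arguments.
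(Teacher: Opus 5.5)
Your proposal is correct and follows the paper's route: multiplicativity of $\Topx$ on nonzero elements of $R$, combined with Lemma~\ref{lem:top-one} applied to each factor $\sigma_{\alpha_i}$ with $\alpha_i>n$. The paper only asserts the multiplicativity, whereas you justify it from the $x$-degree grading of $R=B\otimes_\C\Ga_m$ and the fact that $R$ is an integral domain, which is a welcome addition.
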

\begin{proof}
Since $\Topx(fg)=\Topx(f)\Topx(g)$
for nonzero $f,g\in R$, applying Lemma~\ref{lem:top-one} to each factor gives the desired formula.
\end{proof}

\subsection{Linear independence of certain power sums}
We need the following purely symmetric-function fact in the $x$-variables.
For $\la\in\msP$ let $s_\la(x_1,\dots,x_m)\in\Gam$ be the corresponding Schur function.

\begin{Lem}\label{lem:symmetric-independent}
Assume $n\geq m$. Then the set
\[
       \sX:= \{e_m(x_1,\dots,x_m)^j h_\rho(x_1,\dots,x_m)\mid 0\le j\le m,\,\rho\in\msP,\, \len(\rho)\le j\}
\]
is linearly independent over $\C$ in $\Gam$.  Consequently,
it is also linearly independent over $B$ in
$R=B\otimes_\C \Gam.$
\end{Lem}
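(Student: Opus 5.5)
The idea is to expand each element of $\sX$ in the Schur basis $\{s_\mu(x_1,\dots,x_m)\mid \ell(\mu)\le m\}$ of $\Gam$. We will show that each expansion is unitriangular with respect to the dominance order, and that distinct elements of $\sX$ have distinct leading Schur functions. Since every element of $\sX$ is homogeneous, it suffices to prove linear independence inside each $\Gam^r$, so we may compare partitions of the same size.

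\emph{Step 1 (expansion).} Fix $(j,\rho)$ with $0\le j\le m$ and $\ell(\rho)\le j\le m$. By \cite[I,(6.5)]{Macd} we have $h_\rho=\sum_{\lambda\unrhd\rho}K_{\lambda,\rho}s_\lambda$ with $K_{\rho,\rho}=1$. Every $\lambda\unrhd\rho$ satisfies $\ell(\lambda)\le\ell(\rho)\le m$, so none of the $s_\lambda(x_1,\dots,x_m)$ vanish. Next, $e_m(x_1,\dots,x_m)^j s_\lambda(x_1,\dots,x_m)=s_{\lambda+(j^m)}(x_1,\dots,x_m)$ for $\ell(\lambda)\le m$; this is the usual bialternant identity, since multiplying by $(x_1\cdots x_m)^j$ shifts every exponent by $j$. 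Hence
\[
e_m^j h_\rho=\sum_{\lambda\unrhd\rho}K_{\lambda,\rho}\,s_{\lambda+(j^m)} ,
\]
and $\lambda\unrhd\rho$ holds if and only if $\lambda+(j^m)\unrhd\rho+(j^m)$. So $e_m^jh_\rho=s_{\mu}+\sum_{\nu\rhd\mu}(\ast)s_\nu$ with $\mu=\mu(j,\rho):=\rho+(j^m)$.

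\emph{Step 2 (injectivity of $(j,\rho)\mapsto\mu(j,\rho)$).} This is the step where the constraint $\ell(\rho)\le j$ is used, and it is the main point.
\begin{itemize}
\item If $j<m$, then $\ell(\rho)\le j<m$ forces $\rho_m=0$, so $\mu_m=j<m$.
\item If $j=m$, then $\mu_m=m+\rho_m\ge m$.
\end{itemize}
In either case $j$ is recovered from $\mu$ as $j=\min(\mu_m,m)$. Then $\rho=\mu-(j^m)$, so the map is injective.

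\emph{Step 3 (triangularity argument).} Suppose $\sum c_{j,\rho}\,e_m^jh_\rho=0$ is a nontrivial finite relation in $\Gam^r$. Among the pairs with $c_{j,\rho}\ne0$, choose one whose $\mu(j,\rho)$ is minimal in the dominance order. By Steps 1 and 2, the coefficient of $s_{\mu(j,\rho)}$ in the relation is exactly $c_{j,\rho}$. Indeed, any other term contributes only Schur functions $s_\nu$ with $\nu\unrhd\mu(j',\rho')\neq\mu(j,\rho)$, and $\nu=\mu(j,\rho)$ would contradict minimality. Since the Schur functions $s_\mu$ with $\ell(\mu)\le m$ form a basis of $\Gam$, we get $c_{j,\rho}=0$, a contradiction. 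Therefore $\sX$ is linearly independent over $\C$.

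\emph{Step 4 (over $B$).} By Lemma~\ref{lem:alg-independence} we have $R=B\otimes_\C\Gam$. A $\C$-linearly independent subset of $\Gam$ extends to a $\C$-basis of $\Gam$, and its image $1\otimes(\cdot)$ then lies in a $B$-basis of the free $B$-module $B\otimes_\C\Gam$. Hence $\sX$ is $B$-linearly independent in $R$.

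I expect only Step 2 to need care: without the hypothesis $\ell(\rho)\le j$ the leading terms can collide (for example, $(j,\rho)=(0,(1^m))$ and $(1,\emptyset)$ both give $\mu=(1^m)$).
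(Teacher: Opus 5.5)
Your proof is correct and follows the paper's argument: expand $h_\rho$ in Schur functions via Kostka numbers, shift by $e_m^j$ using $e_m^j s_\nu=s_{\nu+(j^m)}$, and recover $j$ from the $m$-th part of $\rho+(j^m)$ using the constraint $\ell(\rho)\le j$. Your Step 3 makes explicit the dominance-order triangularity that the paper's final ``therefore'' leaves implicit, and your Step 4 is an equivalent variant of the paper's monomial-basis argument for independence over $B$.
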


\begin{proof}
It suffices to prove the first assertion; the second follows by expanding the
coefficients in the standard monomial basis of the polynomial ring $B$.

Fix $j$.  By \cite[I,(6.5)]{Macd} for $\rho\in\msPr$ with $\len(\rho)\leq j$
\[
\begin{split}
h_\rho(x_1,\dots,x_m)&= \sum_{\nu\trianglerighteq\rho,\,\nu\in\msPr}
K_{\nu\rho}\tts_\nu(x_1,\dots,x_m)=
\sum_{\nu\trianglerighteq\rho,\,\nu\in\msPr\atop\len(\nu)\leq j}
K_{\nu\rho}\tts_\nu(x_1,\dots,x_m)
\end{split}
\]
where $K_{\nu\rho}$ are Kostka numbers.
 Using
\[
        e_m(x_1,\dots,x_m)^j \tts_\nu(x_1,\dots,x_m)=\tts_{\nu+(j^m)}(x_1,\dots,x_m),
\]
we see that
\[
\begin{split}
e_m(x_1,\dots,x_m)^j h_\rho(x_1,\dots,x_m)&=
\sum_{\nu\trianglerighteq\rho,\,\nu\in\msPr\atop\len(\nu)\leq j}
K_{\nu\rho}\tts_{\nu+(j^m)}(x_1,\dots,x_m)
\end{split}
\]
If $j<m$ and $\len(\rho)\leq j$, then the $m$-th part of $\rho+(j^m)$
is exactly $j$. If $j=m$ and $\len(\rho)\leq j$, then the $m$-th part of $\rho+(j^m)$ is at least $m$.
Hence the set $$\{\tts_{\rho+(j^m)}(x_1,\dots,x_m)\mid
0\le j\le m,\,\rho\in\msP,\, \len(\rho)\le j
\}$$ is linearly independent over $\C$.
Therefore the set
$\sX$
is linearly independent over $\C$.
\end{proof}

The following proposition is a direct consequence of Lemma \ref{lem:symmetric-independent} and will be essential for the proof of the basis theorem in the next subsection.

\begin{Prop}\label{prop:high-independent}
Assume $n\geq m$. Then the set
$\{\sigma_\alpha\mid\alpha\in\msPmn\}$
is linearly independent over $B$ in $R$, where $\msPmn$ is defined in \eqref{msPmn}.
\end{Prop}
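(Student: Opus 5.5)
Suppose, to get a contradiction, that there is a nontrivial relation $\sum_{\alpha\in F} b_\alpha \sigma_\alpha=0$, where $F\subseteq\msPmn$ is finite and every $b_\alpha\in B$ is nonzero. The plan is to look only at the highest $x$-degree part of this sum. By Corollary~\ref{lem:top-product}, $\Topx(\sigma_\alpha)=K_\alpha c_d^{\ell(\alpha)}e_m^{\ell(\alpha)}h_{\rho(\alpha)}$. This has $x$-degree $m\ell(\alpha)+|\rho(\alpha)|=|\alpha|-\ell(\alpha)(n+1-m)=|\alpha|-d\,\ell(\alpha)$, and it is nonzero because $c_d\neq0$ and $B$ is a domain. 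Since $b_\alpha\in B$ has $x$-degree zero, we also have $\Topx(b_\alpha\sigma_\alpha)=b_\alpha\Topx(\sigma_\alpha)$, of the same $x$-degree.

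Let $D=\max_{\alpha\in F}(|\alpha|-d\ell(\alpha))$ and $F_D=\{\alpha\in F : |\alpha|-d\ell(\alpha)=D\}$. The component of $\sum_\alpha b_\alpha\sigma_\alpha$ in $B\otimes_\C\Gam^D$ then equals
\[
\sum_{\alpha\in F_D} b_\alpha K_\alpha c_d^{\ell(\alpha)}\, e_m^{\ell(\alpha)}h_{\rho(\alpha)},
\]
and this component must vanish. We now apply Lemma~\ref{lem:symmetric-independent}. Each $\alpha\in\msPmn$ gives an element $e_m^{j}h_{\rho}\in\sX$ with $j=\ell(\alpha)\le m$. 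The partition $\rho=\rho(\alpha)$ has $\ell(\rho)\le j$, since $\rho$ is obtained by dropping the zero parts of $(\alpha_1-n-1,\dots,\alpha_\ell-n-1)$, and all of these are $\geq 0$ because $\alpha_i>n$. The map $\alpha\mapsto(\ell(\alpha),\rho(\alpha))$ is injective: $\alpha$ is recovered by padding $\rho$ with zeros up to length $\ell$ and adding $n+1$ to each entry. So distinct $\alpha\in F_D$ give distinct elements of $\sX$. By the $B$-linear independence of $\sX$, every coefficient $b_\alpha K_\alpha c_d^{\ell(\alpha)}$ is zero. As $K_\alpha\in\C^\times$, $c_d\ne0$ and $B$ is a domain, this forces $b_\alpha=0$ for all $\alpha\in F_D$, contradicting our choice of $F$. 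Hence the relation is trivial.

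The step that needs the most care is the extraction of the top-degree component. One must check that the component of $b_\alpha\sigma_\alpha$ in $x$-degree $D$ is exactly $b_\alpha\Topx(\sigma_\alpha)$ when $\alpha\in F_D$, and that it is zero when $\alpha\notin F_D$. This follows because $\degx(b_\alpha\sigma_\alpha)=\degx(\sigma_\alpha)<D$ in the second case. One must also handle the convention $h_\rho$ with $\rho$ containing zero parts, using $h_0=1$. With these points settled, the argument reduces entirely to Lemma~\ref{lem:symmetric-independent} and Corollary~\ref{lem:top-product}.
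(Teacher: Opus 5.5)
Your proof is correct and is essentially the paper's argument. You take the maximal $x$-degree $D$ among the $\sigma_\alpha$ with nonzero coefficient, pass to the $B\otimes_\C\Gam^D$-component via Corollary~\ref{lem:top-product}, and conclude from the $B$-linear independence in Lemma~\ref{lem:symmetric-independent} together with $c_d\neq 0$ and $K_\alpha\neq 0$. Your explicit formula $|\alpha|-d\,\ell(\alpha)$ for the degree, and your check that $\alpha\mapsto(\ell(\alpha),\rho(\alpha))$ is injective with $\ell(\rho(\alpha))\le\ell(\alpha)\le m$, make explicit what the paper leaves implicit.
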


\begin{proof}
Suppose
\[
        \sum_{\alpha\in\msPmn} F_\alpha\sigma_\alpha=0,
        \qquad F_\alpha\in B,
\]
with only finitely many nonzero $F_\alpha$.  If $F_\alpha\not=0$ for some $\alpha\in\msPmn$, let
$D$ be the maximum of $\degx(\sigma_\alpha)$ among those $\alpha$ with
$F_\alpha\ne 0$.  Then we have
\[
     \sum_{\alpha\in\msPmn\atop
         \degx(\sigma_\alpha)=D}F_\alpha\Topx(\sigma_\alpha)=0.
\]
It follows from Corollary~\ref{lem:top-product} that
\[
 \sum_{\alpha\in\msPmn \atop
         \degx(\sigma_\alpha)=D}
F_\alpha K_\alpha c_d^{\len(\alpha)}
e_m(x_1,\dots,x_m)^{\len(\alpha)}
h_{\rho(\alpha)}(x_1,\dots,x_m)=0.
\]
Therefore by Lemma~\ref{lem:symmetric-independent} \[
        F_\alpha K_\alpha c_d^{\len(\alpha)}=0
\]
for all $\alpha\in\msPmn$ with $\degx(\sigma_\alpha)=D$.
By Lemma \ref{lem:top-one} we have
$c_d\ne 0$. Furthermore $K_\alpha\ne 0$. Hence we have $F_\alpha=0$, contradicting the choice of $D$.  Therefore all
$F_\alpha=0$.
\end{proof}

\subsection{The power-sum basis of  $\LambdaT^r_{m|n}$}
For $m,n\ge 1$ and $r\ge 0$ let
\begin{equation}\label{Hpow(m|n,r)}
        \Hpow(m|n,r)=H(\min(m,n)\mid \max(m,n),r).
\end{equation}
With the linear independence result at hand, we can now prove the following basis theorem for the power sums.
\begin{Thm}\label{thm:corrected-power-product}
The set
\[
        \{\hpla\mid \lambda\in \Hpow(m|n,r)\}
\]
is a $\C$-basis of $\LambdaT^r_{m|n}$.
\end{Thm}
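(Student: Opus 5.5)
Since $|\Hpow(m|n,r)|=|H(m|n,r)|$ (the conjugation $\la\mapsto\la'$ gives a bijection $H(m|n,r)\to H(n|m,r)$) and $\dim\Lamnr=|\Hmnr|$ by Theorem~\ref{super Schur function}, it suffices to prove linear independence. We treat $m\le n$ first, where $\Hpow(m|n,r)=\Hmnr$, and then reduce $m>n$ to it.

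\textbf{Case $m\le n$.} Each $\la\in\Hmnr$ decomposes uniquely as $\la=\mu\cup\al$, where $\al$ collects the parts $>n$ and $\mu$ the parts $\le n$. Since $\la_{m+1}\le n$, at most $m$ parts exceed $n$, so $\al\in\msPmn$, while $\mu$ is an arbitrary partition with parts $\le n$. Then $\hpla=\sigma_\mu\sigma_\al$, and $\sigma_\mu$ runs over the distinct monomials in $\sigma_1,\dots,\sigma_n$, which are linearly independent in $B$ by Lemma~\ref{lem:alg-independence}. Suppose $\sum_\la a_\la\hpla=0$ with $a_\la\in\C$. Grouping by $\al$ gives $\sum_{\al\in\msPmn}F_\al\sigma_\al=0$ with $F_\al=\sum_\mu a_{\mu\cup\al}\sigma_\mu\in B$. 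Proposition~\ref{prop:high-independent} forces $F_\al=0$ for all $\al$, and the independence of the monomials $\sigma_\mu$ in $B$ then forces every $a_\la=0$. One should also confirm that $\hpla\in\Lamnr$, which holds because $\hpla=\Phimn(p_\la)$.

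\textbf{Case $m>n$.} Here $\Hpow(m|n,r)=H(n|m,r)$. I would use the linear isomorphism $\tau:\Lambda^r_{n|m}\to\Lamnr$ obtained by swapping the roles of the variable sets and twisting signs. Concretely, take $\tau(f)(x;y)=(-1)^r f(y;x)$ up to an appropriate variable substitution, and check that $\tau$ preserves the defining condition, i.e.\ that $f|_{x_n=t=-y_m}$ independent of $t$ becomes the corresponding condition in $\Lamnr$. On power sums, $\hp_k$ in $(n|m)$ variables, $\sum y^k+(-1)^{k-1}\sum x^k$ after the swap, becomes $(-1)^{k-1}\hp_k(x;y)$, so $\tau$ sends $\hp_\la$ to $\pm\hp_\la$ (the degree-dependent signs being absorbed by homogeneity). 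Applying the first case to $(n|m)$ with $n<m$ shows that $\{\hp_\la\mid\la\in H(n|m,r)\}$ is a basis of $\Lambda^r_{n|m}$, and transporting it through $\tau$ gives the claim.

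The main obstacle is setting up $\tau$ precisely, namely choosing the sign convention so that it is a bijection $\Lambda^r_{n|m}\to\Lamnr$ and sends power sums to signed power sums. The first thing to try is $\tau(f)(x,y)=f(-y,-x)$ combined with the identity $\hp_k$ applied to $(-y;-x)$ equalling $(-1)^k\bigl(p_k(y)+(-1)^{k-1}p_k(x)\bigr)=(-1)^{k-1}\hp_k(x;y)$. It also maps $\Phi_{n|m}(\Gamma)$ onto $\Phimn(\Gamma)$, so surjectivity of both $\Phi$ maps gives bijectivity, with inverse given by the analogous map.
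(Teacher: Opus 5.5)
Your proposal is correct and follows the paper's proof essentially step for step: for $m\le n$ you split $\lambda=\alpha\sqcup\beta$, apply Proposition~\ref{prop:high-independent}, and use the monomial basis of $B$; for $m>n$ you transport along a variable-swapping isomorphism $\tau\colon\Lambda^r_{n|m}\to\Lambda^r_{m|n}$. The paper takes $\tau(f)(x;y)=f(y;x)$, which already preserves the condition $f|_{x_n=t=-y_m}$ and gives $\tau(\hp_k)=(-1)^{k-1}\hp_k$; your variant $f(-y;-x)$ also works, but it sends $\hp_k$ to $-\hp_k$ rather than $(-1)^{k-1}\hp_k$ as you wrote, which is harmless because only the sign ambiguity matters.
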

\begin{proof}
Suppose first that  $n\ge m$. Then $\Hpow(m|n,r)=H(m|n,r)$.
By Theorem \ref{super Schur function}
$\dim_\C\LambdaT^r_{m|n}=  \# H(m|n,r).$ Hence, after the substitution $z_j=-y_j$, it is enough to prove that
$\{\sigma_\lambda\mid \lambda\in H(m|n,r)\}$
is linearly independent.
Assume
\[
        \sum_{\lambda\in H(m|n,r)}c_\lambda\sigma_\lambda=0.
\]
For each $\lambda\in H(m|n,r)$, split its parts into
\[
        \lambda=\alpha\sqcup\beta,
\]
where all parts of $\alpha$ are greater than $n$, and all parts of $\beta$ are at
most $n$.  Since $\lambda\in H(m|n,r)$, we have $\len(\alpha)\le m$; also
$\beta_1\le n$.  Then
$\sigma_\lambda=\sigma_\alpha\sigma_\beta.$
Because $\beta_1\le n$, we have $\sigma_\beta\in B$.  Therefore the relation
becomes
\[
        \sum_{\alpha\in\msP,\, \len(\alpha)\le m\atop \alpha_i>n,\,\forall i}
        \bigg(\sum_{\alpha\sqcup\beta\in\msPr\atop\beta_1\le n}c_{\alpha\sqcup\beta}\sigma_\beta\bigg)
        \sigma_\alpha=0.
\]
Set
\[
        F_\alpha=\sum_{\alpha\sqcup\beta\in\msPr\atop\beta_1\le n}c_{\alpha\sqcup\beta}\sigma_\beta\in B.
\]
By Proposition~\ref{prop:high-independent}, $F_\alpha=0$ for all $\al$.  Since the
monomials $\{\sigma_\beta\mid \beta_1\le n\}$
form the standard monomial basis of the polynomial ring
$B=\C[\sigma_1,\dots,\sigma_n]$, each coefficient $c_{\alpha\sqcup\beta}$ is
zero.  Therefore the set $\{\sigma_\lambda\mid \lambda\in H(m|n,r)\}$ is linearly independent. Hence the set $$\{\hpla\mid \lambda\in H(m|n,r)\}
$$
is a $\C$-basis of $\LambdaT^r_{m|n}$.

Now assume $m>n$. Then $\Hpow(m|n,r)=H(n|m,r)$.
 There is a linear
isomorphism
\[
    \tau:\LambdaT^r_{n|m}\cong \LambdaT^r_{m|n},
\]
such that
$$\tau(\sum_{\al\in\mbnn,\,\bt\in\mbnm} l_{\al,\bt}
x_1^{\al_1}\dots x_n^{\al_n}y_1^{\bt_1}\dots y_m^{\bt_m})=
\sum_{\al\in\mbnn,\,\bt\in\mbnm} l_{\al,\bt}
y_1^{\al_1}\dots y_n^{\al_n}x_1^{\bt_1}\dots x_m^{\bt_m}.$$
For $k\geq 1$ we have
\[\begin{split}
\tau(\hp_k(x_1,\dots,x_n;y_1,\dots,y_m))&=p_k(y_1,\dots,y_n)+(-1)^{k-1}p_k(x_1,\dots,x_m)\\
&=(-1)^{k-1}\hpk.\end{split}\]
It follows that for $\la\in\msPr$
$$\tau(\hp_\la(x_1,\dots,x_n;y_1,\dots,y_m))=(-1)^{r-\len(\lambda)}\hpla.$$
Since $m>n$, the set  $\{\hp_\la(x_1,\dots,x_n;y_1,\dots,y_m)\mid\la\in\Hnmr\}$
forms a $\mbc$-basis of $\LambdaT^r_{n|m}$ by the already proved case.
Applying $\tau$ shows that the set $ \{\hpla \mid \lambda\in H(n|m,r)\}$
is a basis of $\LambdaT^r_{m|n}$.
\end{proof}
\begin{Rem}\label{Rem}
When \(m>n\), choosing \(H(m|n,r)\) on the power-sum side can be invalid. For example, take \((m,n,r)=(2,1,10)\). The set
\(
H(2|1,10)=\{\lambda\in\msP_{10}\mid \lambda_3\le 1\}
\)
yields linearly dependent elements  \(\mathrm{hp}_\lambda:=\hp_\la(x_1,x_2;y_1)\). The corrected set is
\(
H(1|2,10)=\{\lambda\in\msP_{10}\mid\lambda_2\le 2\}.
\)  Indeed, the following nontrivial linear relation holds among the original family:
\begin{equation}\label{linear comb}
\begin{split}
0 ={}& 25\,\mathrm{hp}_{(7,3)} - 75\,\mathrm{hp}_{(7,2,1)} + 50\,\mathrm{hp}_{(7,1^3)} - 175\,\mathrm{hp}_{(6,3,1)}+ 525\,\mathrm{hp}_{(6,2,1^2)} - 350\,\mathrm{hp}_{(6,1^4)}  \\
&- 21\,\mathrm{hp}_{(5,5)}  + 210\,\mathrm{hp}_{(5,4,1)} + 105\,\mathrm{hp}_{(5,3,1^2)} - 1155\,\mathrm{hp}_{(5,2,1^3)}+ 882\,\mathrm{hp}_{(5,1^5)}  \\
&- 525\,\mathrm{hp}_{(4,4,1^2)} + 1225\,\mathrm{hp}_{(4,3,1^3)}  + 525\,\mathrm{hp}_{(4,2,1^4)} - 910\,\mathrm{hp}_{(4,1^6)}- 1225\,\mathrm{hp}_{(3,3,1^4)}  \\
&+ 1050\,\mathrm{hp}_{(3,2,1^5)}  + 220\,\mathrm{hp}_{(3,1^7)} - 525\,\mathrm{hp}_{(2,2,1^6)}  + 180\,\mathrm{hp}_{(2,1^8)} - 36\,\mathrm{hp}_{(1^{10})}.
\end{split}
\end{equation}
To verify this relation, define  for $r\geq 1$
\[
C_r:=(x_1-\hp_1)^r+(x_2-\hp_1)^r-(-y_1-\hp_1)^r,
\]
where $\hp_0=1$ and  for $k\geq 1$, $\hp_k=\hp_k(x_1,x_2;y_1)=x_1^k+x_2^k+(-1)^{k-1}y_1^k$. Note that $$C_r= \sum_{j=0}^{r}(-1)^j\binom{r}{j}
\hp_1^{\,j}\hp_{r-j}.$$
A direct computation shows that the right-hand side of \eqref{linear comb} equals
\(
25C_3C_7-21C_5^2.
\)
One easily checks that
\[
C_3=3AB(A+B),\qquad
C_5=5AB(A+B)Q,\qquad
C_7=7AB(A+B)Q^2,
\]
where
\(
A=x_1+y_1,\
B=x_2+y_1,\
Q=A^2+AB+B^2.
\)
Consequently,
\[
25C_3C_7
 =525A^2B^2(A+B)^2Q^2=
21C_5^2.
\]
Therefore,
\(
25C_3C_7-21C_5^2=0,
\)
confirming the dependence.
This example clearly demonstrates why the index set must be replaced by
$H(1|2,10)$ when $m>n$.
\end{Rem}

\section{Centers of quantum Schur superalgebras}
This section is devoted to constructing explicit bases for the center of the quantum Schur superalgebra $\bfSr$. Our strategy is to transport known bases of the center of the Hecke algebra $\bfHr$ via the Schur--Weyl duality and the corresponding homomorphism $\eta_r$.
Unlike the classical (non-super) setting, the super case requires a doubly symmetric function ring $\Lamn$ with a parity condition. We shall see that the two classical Hecke center bases, due to Geck and Rouquier and to Jones, give rise to two distinct bases for $\msZ(\bfSr)$, with index sets $\Hmnr$ and $\Hpowmnr$ respectively.

To proceed, we clarify the center in the superalgebra setting.  The following subsection establishes that the ordinary center and the supercenter of $\bfSr$ coincide; hence we may safely use the usual commutator.

\subsection{Evenness of the center}
Before constructing explicit bases, we first clarify the notion of the center in the superalgebra setting. For an associative superalgebra $\mathscr A$ over a field,
 we denote by $\bar a\in\mathbb Z_2$ the parity of a homogeneous element $a\in\mathscr A$. One distinguishes the \emph{supercenter}. For $\bar i\in \mathbb Z_2$, define its $\bar i$-th graded component by
\[
\mathscr Z_{\mathrm{s}}(\mathscr A)_{\bar i}
=
\bigl\{z\in \mathscr A_{\bar i}\mid za=(-1)^{\bar i\bar a}az,\ \text{for all homogeneous }a\in\mathscr A\bigr\}.
\]
Then the supercenter is the graded subspace
\[
\mathscr Z_{\mathrm{s}}(\mathscr A)
=
\mathscr Z_{\mathrm{s}}(\mathscr A)_{\bar 0}
\oplus
\mathscr Z_{\mathrm{s}}(\mathscr A)_{\bar 1}.
\]
This should be distinguished from the ordinary center
\[
\mathscr Z(\mathscr A)=\{z\in\mathscr A\mid za=az,\ \text{for all }a\in\mathscr A\},
\]
which does not require homogeneity and ignores the sign factor. In general, $\mathscr Z(\mathscr A)$ may contain odd elements, in which case the two notions differ. In this subsection we prove that for the quantum Schur superalgebra $\bfSr$, the ordinary center contains no odd elements; consequently $\mathscr Z_{\mathrm{s}}(\bfSr)=\mathscr Z(\bfSr)$. This justifies our use of the usual commutator in the definition of the center throughout the remainder of this paper.

For $1\leq i<m+n$ and $1\leq j\leq m+n$, let
$$\sfe_i=\Zr(\ttE_i),\quad\sff_i=\Zr(\ttF_i),\quad \sfk_j=\Zr(\ttK_j)$$
where $\Zr$ is defined in \eqref{Zr}.
Let $\Sqmnrz$ be the subalgebra of $\Sqmnr$ generated by the elements $\sfk_j^{\pm 1}$ for $1\leq j\leq m+n$.
\begin{Lem}\label{even element}
If $z\in\Sqmnr$ is such that $z=\sfk_hz\sfk_h^{-1}$ for $1\leq h\leq m$, then $z\in\Sqmnr_{\bar 0}$.
\end{Lem}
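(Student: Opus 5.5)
Since $\Sqmnr\subseteq\End(V_v^{\otimes r})$, we will decompose $z$ along the weight spaces of the tensor space and read off its parity from how it moves weights. Write $V_v^{\otimes r}=\bigoplus_{\mu}(V_v^{\otimes r})_\mu$, where $\mu=(\mu_1,\dots,\mu_{m+n})$ runs over compositions of $r$ into $m+n$ parts and $(V_v^{\otimes r})_\mu$ is spanned by the basis tensors $v_{i_1}\otimes\dots\otimes v_{i_r}$ containing exactly $\mu_a$ factors equal to $v_a$. By \eqref{Z} and \eqref{comul}, $\sfk_h=\Zr(\ttK_h)$ acts on $(V_v^{\otimes r})_\mu$ by the scalar $v_h^{\mu_h}$; for $1\le h\le m$ this is $v^{\mu_h}$. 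The parity of a basis tensor in $(V_v^{\otimes r})_\mu$ is $\mu_{m+1}+\dots+\mu_{m+n}\pmod 2$, which equals $r-(\mu_1+\dots+\mu_m)\pmod 2$.

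Given any $z\in\End(V_v^{\otimes r})$, write $z=\sum_{\mu,\nu}z_{\nu\mu}$ with $z_{\nu\mu}:(V_v^{\otimes r})_\mu\to(V_v^{\otimes r})_\nu$. The condition $\sfk_hz\sfk_h^{-1}=z$ gives $v^{\nu_h-\mu_h}z_{\nu\mu}=z_{\nu\mu}$ for each $1\le h\le m$. Since $v$ is transcendental, $v^k\ne1$ for $k\ne0$, so every nonzero block satisfies $\nu_h=\mu_h$ for all $h\le m$. Hence $\sum_{h\le m}\nu_h=\sum_{h\le m}\mu_h$. Therefore each nonzero block maps the weight space $\mu$ into a weight space with the same parity. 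So $z$ preserves the $\mathbb Z_2$-grading of $V_v^{\otimes r}$, meaning $z$ is an even endomorphism.

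It remains to see that $z$ lies in $\Sqmnr_{\bar 0}$, and not merely in $\End(V_v^{\otimes r})_{\bar0}$. The map $\Zr$ is a superalgebra homomorphism, so $\Sqmnr$ is a graded subspace of $\End(V_v^{\otimes r})$, and its grading is the one induced from the endomorphism grading. Write $z=z_{\bar0}+z_{\bar1}$ with $z_{\bar i}\in\Sqmnr_{\bar i}$. Both components are also the graded components of $z$ in $\End(V_v^{\otimes r})$, and we have shown that $z$ is even there. Hence $z_{\bar1}=0$ and $z\in\Sqmnr_{\bar0}$.

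The only delicate point is this compatibility of gradings, that the grading on $\Sqmnr$ is the restriction of the grading on $\End(V_v^{\otimes r})$. It holds because $\Zr$ respects parity on the generators: $\sfe_m$ and $\sff_m$ map $v_{m+1}$ to $v_m$ and $v_m$ to $v_{m+1}$, so they change parity, while all other generators preserve it. Alternatively, one can avoid this check entirely: the odd generators change $\sum_{h\le m}\mu_h$ by exactly $\pm1$, so a weight-block decomposition of monomials in the generators gives the same conclusion directly.
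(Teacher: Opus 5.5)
Your proof is correct, but it takes a different route from the paper's. The paper works inside $\Sqmnr$. It fixes a $\mathbb{C}(v)$-basis of triangular monomials $x_iz_ky_j$, where $x_i$ is a word in the $\sfe_s$, $y_j$ is a word in the $\sff_t$, and $z_k\in\Sqmnrz$. By relation (b), conjugation by $\sfk_h$ rescales $x_iz_ky_j$ by $v_h^{l_{i,j,h}}$ with $l_{i,j,h}=a_{i,h}-a_{i,h-1}-b_{j,h}+b_{j,h-1}$. Summing the forced equalities $l_{i,j,h}=0$ over $1\le h\le m$ telescopes to $a_{i,m}=b_{j,m}$. So every monomial in the support has as many $\sfe_m$'s as $\sff_m$'s and is even.

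You instead work in the ambient algebra $\End(V_v^{\otimes r})$ and split $z$ into weight blocks $z_{\nu\mu}$. The invariant $\sum_{h\le m}\mu_h$ that you show is preserved is the module-side counterpart of the paper's telescoped count, since $\sfe_m,\sff_m$ are exactly the generators that change it, by $\pm1$.

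Your version has two advantages. It does not need a triangular basis of $\Sqmnr$; the paper takes that basis for granted, and it really comes from the triangular decomposition of $U_v(\mathfrak{gl}_{m|n})$. It also proves a stronger statement: any endomorphism of $V_v^{\otimes r}$ commuting with $\sfk_1,\dots,\sfk_m$ is even. The cost is the grading-compatibility step $\Sqmnr_{\bar i}\subseteq\End(V_v^{\otimes r})_{\bar i}$. You handle it correctly by checking how the generators act on parity. The paper implicitly relies on the same fact when it calls $\Zr$ a superalgebra homomorphism and speaks of parities of monomials in $\Sqmnr$. In turn, the paper's argument is intrinsic to the algebra and uses only its defining relations.
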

\begin{proof}
Let $\{x_iz_ky_j\mid i\in I,\,j\in J,\,k\in K\}$ be a $\mbc(\up)$-basis for $\Sqmnr$, where $x_i$
is a monomial in the generators $\sfe_s$, $y_j$ is a monomial in the  generators $\sff_t$ and $z_k\in\Sqmnrz$.
For a monomial $x_i=\sfe_{i_1}\sfe_{i_2}\dots \sfe_{i_{a}}$ define
$$a_{i,p}=\sum_{1\leq s\leq a}\dt_{i_s,p}$$ for $1\leq p< m+n$.
Similarly for $y_j=\sff_{j_1}\sff_{j_2}\dots \sff_{j_{b}}$  define
$$b_{j,p}=\sum_{1\leq u\leq b}\dt_{j_u,p}$$ for $1\leq p< m+n$.
Write $$z=\sum_{i\in I,\,j\in J,\,k\in K}c_{i,j,k}x_iz_ky_j\in \Sqmnr $$ with $c_{i,j,k}\in\mbc(\up)$. Then for $1\leq h\leq m $
we have $$z=\sfk_hz\sfk_h^{-1}=\sum_{i\in I,\,j\in J,\,k\in K}
c_{i,j,k}v_h^{l_{i,j,h}} x_iz_ky_j  $$
where $l_{i,j,1}=\sum_{1\leq p< m+n}(a_{i,p}-b_{j,p}) \dt_{1,p} =a_{i,1}-b_{j,1}$ and
$$l_{i,j,h}=\sum_{1\leq p< m+n}(a_{i,p}-b_{j,p})(\dt_{h,p}-\dt_{h-1,p}) =a_{i,h}-a_{i,h-1}-b_{j,h}+b_{j,h-1}$$
for $1<h\leq m $.
It follows that $c_{i,j,k}=c_{i,j,k}v_h^{l_{i,j,h}}$ for all $i,j,k,h$. Whenever $c_{i,j,k}\not=0$ we must have $l_{i,j,h}=0$ for every $h$. Summing these equalities over
$1\leq h\leq m$ yields
 $$0=\sum_{1\leq h\leq m}l_{i,j,h}=a_{i,m}-b_{j,m}.$$
 Hence, in each non-zero summand  $x_iz_ky_j$, the numbers of $\sfe_m$'s equals the number of $\sff_m$'s. The only odd simple generators of $\bfSr$ are $\sfe_m$ and $\sff_m$; therefore each such monomial is an even element of the superalgebra.
Consequently $z$  itself is an even element.
\end{proof}
The following consequence is immediate from Lemma \ref{even element}.
\begin{Coro}
The ordinary center $\msZ(\Sqmnr)$ of the superalgebra $\Sqmnr$ consists of only even elements. Consequently, we have
\(
\mathscr Z_{\mathrm{s}}(\bfSr) = \mathscr Z(\bfSr).\)
\end{Coro}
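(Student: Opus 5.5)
The corollary follows almost formally from Lemma~\ref{even element}. The plan is first to show that every central element satisfies the hypothesis of that lemma, and then to compare the two notions of center using the resulting parity information.

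\textbf{Step 1: central elements are even.} Take $z\in\msZ(\Sqmnr)$.
\begin{itemize}
\item Since $z$ commutes with every element of $\Sqmnr$, in particular $z\sfk_h=\sfk_h z$ for $1\le h\le m$.
\item Each $\sfk_h$ is invertible in $\Sqmnr$, because $\sfk_h^{-1}=\Zr(\ttK_h^{-1})$. Hence $z=\sfk_h z\sfk_h^{-1}$.
\item Lemma~\ref{even element} then gives $z\in\Sqmnr_{\bar 0}$.
\end{itemize}
So $\msZ(\Sqmnr)\subseteq \Sqmnr_{\bar 0}$, which is the first assertion.

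\textbf{Step 2: the inclusion $\msZ(\Sqmnr)\subseteq\mathscr Z_{\mathrm s}(\Sqmnr)$.} For an even element $z$, the supercommutation condition $za=(-1)^{\bar 0\,\bar a}az$ is just $za=az$. Therefore every element of $\msZ(\Sqmnr)$, being even by Step 1, lies in $\mathscr Z_{\mathrm s}(\Sqmnr)_{\bar 0}$.

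\textbf{Step 3: the reverse inclusion.} Let $z\in\mathscr Z_{\mathrm s}(\Sqmnr)$ and write $z=z_0+z_1$ with $z_{\bar i}\in \mathscr Z_{\mathrm s}(\Sqmnr)_{\bar i}$.
\begin{itemize}
\item The even part $z_0$ commutes with all homogeneous elements. Since $\Sqmnr$ is spanned by its homogeneous elements, $z_0$ is an ordinary central element.
\item The odd part $z_1$ satisfies $z_1a=(-1)^{\bar a}az_1$ for homogeneous $a$. In particular it commutes with the even elements $\sfk_h$, since each $\sfk_h$ is the image of the even generator $\ttK_h$ under the superalgebra homomorphism $\Zr$. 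Hence $z_1=\sfk_h z_1\sfk_h^{-1}$ for $1\le h\le m$.
\item Lemma~\ref{even element} now forces $z_1$ to be even. An element that is both even and odd is zero, so $z_1=0$.
\end{itemize}
Thus $z=z_0\in\msZ(\Sqmnr)$, and combining with Step 2 gives $\mathscr Z_{\mathrm s}(\Sqmnr)=\msZ(\Sqmnr)$.

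The only point needing care is Step 3, since the equality of centers is not immediate from evenness of $\msZ$ alone. The key observation is that odd supercentral elements still commute with the even $\sfk_h$, so Lemma~\ref{even element} applies to them too. This relies on the parity of $\sfk_h$, which is guaranteed because $\Zr$ is a superalgebra homomorphism.
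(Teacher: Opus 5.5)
Your argument is correct and is essentially the paper's: the paper simply declares the corollary immediate from Lemma~\ref{even element}, which is exactly your Step~1. Your Step~3, observing that an odd supercentral element still commutes with the even elements $\sfk_h$ so that Lemma~\ref{even element} forces $\mathscr Z_{\mathrm s}(\Sqmnr)_{\bar 1}=0$, correctly supplies the detail the paper leaves implicit in concluding $\mathscr Z_{\mathrm s}(\Sqmnr)=\msZ(\Sqmnr)$.
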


\subsection{Geck--Rouquier and Jones bases of $\msZ(\bfHr)$}

For $\la\in\msPr$, let $C_\la$ be the conjugacy
class of $\fS_r$ consisting of permutations of cycle type $\la$, and choose an element $w_\la\in C_\la$ of minimal length. A standard result (see \cite[Th. 5.1]{Ram} and also \cite[Th. 8.2.3]{GP})  states  that for  $w\in\fS_r$ there exists $f_{w,\la}\in\mbz[v,v^{-1}]$, independent of the particular choice of $w_\la$, such that
$$T_w\equiv \sum_{\la\in\msPr}f_{w,\la}T_{w_\la}\mod[\bfHr,\bfHr].$$
Using these coefficients, one defines
\begin{equation}\label{fla}
f_\la^*=\sum_{w\in\fS_r}q^{-\ell(w)}f_{w,\la}T_{w^{-1}}.
\end{equation}
The following fundamental theorem, due to Geck-Rouquier \cite{GR}, provides an explicit basis for the center of the Hecke algebra.

\begin{Thm}\label{GR basis}
The set $\{f_\la^*\mid\la\in\msPr\}$ forms a basis of the center $\msZ(\bfHr)$ of the Hecke algebra $\bfHr$.
\end{Thm}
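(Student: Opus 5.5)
The proof splits into two parts: first show that each $f_\la^*$ is central, then show that the family is linearly independent. Since $\dim_{\mbc(v)}\msZ(\bfHr)=\#\msPr$ (the Hecke algebra is split semisimple over $\mbc(v)$ with simple modules $S_v^\la$, $\la\in\msPr$), independence gives the basis property.

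\emph{Centrality.} Use the symmetrizing trace $\tau(T_w)=\delta_{w,1}$. It satisfies $\tau(T_wT_{y^{-1}})=\delta_{w,y}q^{\ell(w)}$, so $\{q^{-\ell(w)}T_{w^{-1}}\}$ is the basis dual to $\{T_w\}$. For any $\mbc(v)$-linear map $\phi:\bfHr\to\mbc(v)$, the element
\[
z_\phi:=\sum_{w\in\fS_r}q^{-\ell(w)}\phi(T_w)T_{w^{-1}}
\]
is characterized by $\tau(z_\phi h)=\phi(h)$ for all $h\in\bfHr$. If $\phi$ vanishes on $[\bfHr,\bfHr]$, then $\tau(z_\phi[h,h'])=0$. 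Since $\tau$ is symmetric, this gives
\[
\tau\bigl((z_\phi h-hz_\phi)h'\bigr)=\tau(z_\phi hh')-\tau(z_\phi h'h)=0
\]
for all $h'$. Nondegeneracy of $\tau$ then forces $z_\phi$ to be central. Now take $\phi_\la$ to be the functional $T_w\mapsto f_{w,\la}$. By the cited result of Ram and Geck--Pfeiffer, the classes of the $T_{w_\mu}$ form a basis of the cocenter $\bfHr/[\bfHr,\bfHr]$, so $\phi_\la$ is well defined on the cocenter. This basis property is the key input. With it, $f_\la^*=z_{\phi_\la}$ is central.

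\emph{Linear independence.} The map $\phi\mapsto z_\phi$ is injective, because $\tau(z_\phi h)=\phi(h)$ recovers $\phi$. The $\phi_\la$ are the coordinate functionals of the cocenter basis $\{T_{w_\mu}\}$, so $\phi_\la(T_{w_\mu})=\delta_{\la\mu}$, and they are therefore linearly independent. Hence the $f_\la^*$ are independent, and there are $\#\msPr$ of them. Equivalently, $\tau(f_\la^*T_{w_\mu})=\delta_{\la\mu}$ shows directly that the pairing of $\{f_\la^*\}$ with $\{T_{w_\mu}\}$ is nondegenerate.

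\emph{Main obstacle.} The real content is the input that $\{T_{w_\la}\}$ spans the cocenter, with coefficients independent of the choice of minimal-length $w_\la$, and is linearly independent there. Spanning and independence of choice come from Geck--Pfeiffer's theory of minimal-length elements in conjugacy classes, which the paper takes as given. Independence I would get by dimension count: $\dim\bfHr/[\bfHr,\bfHr]=\dim\msZ(\bfHr)=\#\msPr$ by semisimplicity. The remaining care is to verify that $\tau$ is indeed symmetric and to get the dual-basis normalization $q^{-\ell(w)}$ right, since the paper's quadratic relation is $(T_i+1)(T_i-q)=0$.
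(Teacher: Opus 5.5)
Your argument is correct. It is essentially the standard Geck--Rouquier proof, which the paper does not reproduce but only cites: the symmetrizing trace $\tau$ identifies $\msZ(\bfHr)$ with the dual of the cocenter $\bfHr/[\bfHr,\bfHr]$, $f_\la^*$ is the central element attached to the coordinate functional of the class of $T_{w_\la}$, and the Ram/Geck--Pfeiffer spanning result is the only substantial input. Over $\mathbb{C}(v)$, your dimension count from semisimplicity is a legitimate way to get independence of the $T_{w_\la}$ in the cocenter; the original Geck--Rouquier argument works over $\mathbb{Z}[v,v^{-1}]$ and gives an integral basis, but that refinement is not needed for the statement here.
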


A second  basis of $\msZ(\bfHr)$ is due to Jones. Let $\la$ be a composition of $r$ with length $l$. The corresponding parabolic subalgebra is $\bsH_\la=\bsH_{\la_1}\bsH_{\la_2}\dots\bsH_{\la_l}$
where each factor $\bsH_{\la_i}$ is isomorphic to
$\bsH_v(\fS_{\la_i})$.
Let $\fS_\la$ be the standard Young subgroup  attached to this composition, and define the set of distinguished right coset representatives
$$\msD_\la=\{d\mid d\in\fSr,\ell(wd)=\ell(w)+\ell(d)\text{ for
$w\in\fS_\la$}\}.$$
For any element $h\in\bsH_\la$ set
$$\msN_\la(h)=\sum_{w\in\msD_\la}q^{-\ell(w)}T_whT_{w^{-1}}$$
(see \cite{Jones}).
Now take $\la$ to be a partition of $r$ with $\ell(\la)=l$. Define
  $F_\la=f_{\la_1}^*f_{\la_2}^*\dots f_{\la_l}^*$,
where each $f_{\la_i}^*$ is the Geck-Rouquier central element in the factor $\bsH_{\la_i}$ corresponding to the one-part partition $(\la_i)$.
The subsequent theorem, proved by Jones  \cite{Jones} (cf. \cite{WW}),
provides the second basis.

\begin{Thm}
The set $\{\msN_\la(F_\la)\mid\la\in\msPr\}$ constitutes a basis of the center $\msZ(\bfHr)$ of the Hecke algebra $\bfHr$.
\end{Thm}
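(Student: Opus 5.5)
The statement to prove is Jones' theorem: $\{\msN_\la(F_\la)\mid\la\in\msPr\}$ is a basis of $\msZ(\bfHr)$. Since $\dim\msZ(\bfHr)=\#\msPr$, I only need to show that the set spans, or equivalently that it is linearly independent. I will prove independence by comparing it with the Geck--Rouquier basis of Theorem~\ref{GR basis}.

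First I check that each $\msN_\la(F_\la)$ is central. The element $F_\la$ is central in the parabolic subalgebra $\bsH_\la$, because each $f^*_{\la_i}$ is central in its factor and the factors commute. The standard relative-norm argument for Hecke algebras (as in Jones) then applies. Using the symmetrizing trace $\tau(T_w)=\delta_{w,1}$ with dual bases $\{T_w\}$ and $\{q^{-\ell(w)}T_{w^{-1}}\}$, the map $\msN_\la$ is the relative trace from the $\bsH_\la$-centralizer to the center of $\bfHr$. Hence $\msN_\la$ sends $\msZ(\bsH_\la)$ into $\msZ(\bfHr)$.

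Next I express $\msN_\la(F_\la)$ in the Geck--Rouquier basis through the class-polynomial pairing. For central $z$ and $\mu\in\msPr$, define $\langle z,\mu\rangle$ as the coefficient of $T_{w_\mu}$ when $z$ is reduced modulo $[\bfHr,\bfHr]$. The basis $f^*_\mu$ is characterized by $\tau(f^*_\mu T_{w_\nu})=\delta_{\mu\nu}$, so $z=\sum_\mu \tau(zT_{w_\mu})f^*_\mu$. For $z=\msN_\la(h)$, $\tau$-symmetry and the dual-basis identity give $\tau(\msN_\la(h)\,x)=\tau_\la(h\,\pi_\la(x))$, where $\pi_\la$ is the projection of $\bfHr$ onto $\bsH_\la$ along the complement spanned by non-identity double-coset terms. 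Taking $x=T_{w_\mu}$ and $h=F_\la$, the factor $\tau_\la(F_\la\,\cdot)$ picks out, in each block $\bsH_{\la_i}$, the coefficient of the Coxeter element $T_{w_{(\la_i)}}$. So $\tau(\msN_\la(F_\la)T_{w_\mu})$ counts (with $q$-weights) the ways of writing the reduced image of $T_{w_\mu}$ in $\bsH_\la$ modulo commutators as a product of $(\la_i)$-cycle classes.

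The key step is triangularity of this matrix. I claim $\tau(\msN_\la(F_\la)T_{w_\mu})=0$ unless $\mu$ is a refinement-coarsening of $\la$ in the appropriate direction, namely unless $\la$ is obtained by merging parts of $\mu$, or vice versa. I also claim the diagonal entry for $\mu=\la$ is a nonzero element, a power of $q$ times the order of the stabilizer. I expect this to be the main obstacle. To establish it, I would specialize at $v=1$ through the integral form over $\mbz[v,v^{-1}]$. There $\msN_\la(F_\la)$ becomes the induced class sum $\sum_{d}d\,(c_{\la_1}\cdots c_{\la_l})d^{-1}$, which is a nonzero multiple of the class sum $C_\la$. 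Triangularity and a nonzero diagonal at $v=1$ give a matrix whose determinant is nonzero at $v=1$. The determinant is a Laurent polynomial, so it is nonzero in $\mbc(v)$. This proves linear independence, and the dimension count then completes the proof.
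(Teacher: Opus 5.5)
The paper gives no proof of this statement; it quotes Jones (cf.\ Wan--Wang). Within the paper's framework it can also be read off from \eqref{def psir} and Theorem~\ref{psi_r}. Write $p_\lambda(x)=\sum_\mu R_{\lambda,\mu}m_\mu(x)$, so $R=L^{-1}$. Then
$\mathscr N_\lambda(F_\lambda)=\sum_\mu R_{\lambda,\mu}(q-1)^{\ell(\lambda)-\ell(\mu)}f^*_\mu$,
which is triangular with respect to refinement, with diagonal entries $\prod_i m_i(\lambda)!$.

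Your route is genuinely different, and its load-bearing part is correct:
\begin{itemize}
\item $b_{\lambda\mu}:=\tau(\mathscr N_\lambda(F_\lambda)T_{w_\mu})$ lies in $\mathbb Z[v,v^{-1}]$.
\item The expansion $z=\sum_\mu\tau(zT_{w_\mu})f^*_\mu$ for central $z$ is right, since $\tau(f^*_\mu T_y)=f_{y,\mu}$.
\item $f^*_\mu|_{v=1}=c_\mu$, and $\mathscr N_\lambda(F_\lambda)|_{v=1}=\bigl(\prod_i m_i(\lambda)!\bigr)c_\lambda$.
\end{itemize}
This already makes $(b_{\lambda\mu}(1))$ \emph{diagonal} with nonzero entries. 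Hence $\det(b_{\lambda\mu})$ is a Laurent polynomial that is nonzero at $v=1$, and you are done.

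The cited route gives the exact transition matrix to the Geck--Rouquier basis, which the paper actually uses later (Theorem~\ref{Jones basis} goes through $\widetilde p_\lambda$). Yours gives only nondegeneracy, but elementarily: it needs only the Geck--Rouquier basis and centrality of relative norms. It is the same integral-form specialization device the paper itself uses for Theorem~\ref{basis of Sqmnr}.

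You should strip out the steps you do not need, because some of them are wrong.
\begin{itemize}
\item \textbf{Generic triangularity.} This cannot be obtained by specializing at $v=1$: the vanishing of $b_{\lambda\mu}(1)$ says nothing about $b_{\lambda\mu}$ itself. It happens to be true by the formula above, and the generic diagonal entry is $\prod_i m_i(\lambda)!$ with no power of $q$. Your argument does not need it.
\item \textbf{Reciprocity formula.} $\tau(\mathscr N_\lambda(h)x)=\tau_\lambda(h\pi_\lambda(x))$ is false as stated. For $\lambda=(1^r)$ and $h=x=1$, the left side is $\sum_d q^{-\ell(d)}\tau(T_dT_{d^{-1}})=r!$, while the right side is $1$. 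The correct version replaces $\pi_\lambda(x)$ by $\sum_d q^{-\ell(d)}\pi_\lambda(T_{d^{-1}}xT_d)$.
\item \textbf{Side convention.} Your centrality step, and the identification of $\mathscr N_\lambda(F_\lambda)|_{v=1}$ with an induced class sum, require $d$ in $\sum_d q^{-\ell(d)}T_dhT_{d^{-1}}$ to run over minimal \emph{left} coset representatives $d\mathfrak S_\lambda$. With the paper's literal $\mathscr D_\lambda$ (minimal in $\mathfrak S_\lambda d$), take $r=3$ and $\lambda=(2,1)$. Then $\mathscr D_\lambda=\{1,s_2,s_2s_1\}$, and the $v=1$ specialization is $(12)+2(13)$, which is not central. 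State the convention explicitly, e.g.\ use $T_{d^{-1}}hT_d$ with the paper's $\mathscr D_\lambda$.
\end{itemize}
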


These two bases will serve as the starting point for our construction.   The next step is to connect these bases with symmetric function theory, which will enable us to transfer them to the quantum Schur superalgebra via the duality.

\subsection{An isomorphism from $\msZ(\bfHr)$ to symmetric functions}
To transfer the known central bases of the Hecke algebra $\bfHr$ to the quantum Schur superalgebra, we first need a bridge between the center of the Hecke algebra $\bfHr$ and the ring of symmetric functions.
Let $\Lav=\La\ot_\mbc\mbc(v)$ and $\Lamnv=\tiLamn\ot_\mbc\mbc(v)$.
For $r \in \mbn$, we denote by
$$\Lar_v=\Lar \otimes_\mbc \mbc(v),\quad\Lamnrv=\Lamnr\ot_\mbc\mbc(v)$$
the respective homogeneous degree-$r$ components.
By base change, the ring homomorphism $\Phimn$ defined in \eqref{Phimn} extends to a $\mathbb C(v)$-linear homomorphism from $\Lav$ to $\Lamnv$;
we shall denote this extension again  by $\Phimn$.
For $\lambda \in \Par$, define:
$$
\widetilde{s}_\lambda(x)=\frac{1}{\kappa_\lambda(q)} \tts_\lambda(x) \in \Lar_v,\quad
\widetilde{p}_\lambda(x)=(q-1)^{\ell(\lambda)} \prod_{1 \leqslant i \leqslant \ell(\lambda)} \frac{1}{q^{\lambda_i}-1} p_\lambda(x) \in \Lar_v
$$
Here $\kappa_\lambda(q) \in \mathbb{Z}[q,q^{-1}]$ is the Schur element for $\bfHr$ (see \cite[Th. 7.2.1]{GP}), and $p_\la(x)$ is defined in \eqref{pla}.

For $\lambda \in \Par$, we write
$$
m_\lambda(x)=\sum_{\mu \in \Par} L_{\lambda, \mu} p_\mu(x)
$$
where $L_{\lambda, \mu} \in \mbc$. For $\lambda \in \Par$ let
\begin{equation}\label{tilde m_lambda(x)}
\widetilde{m}_\lambda(x)=\sum_{\mu \in \Par} L_{\lambda, \mu} \frac{(q-1)^{\ell(\lambda)}}{\prod_{1 \leqslant s \leqslant \ell(\mu)}\left(q^{\mu_s}-1\right)} p_\mu(x) \in \Lar_v.
\end{equation}

For $\la\in\msPr$ let
$\epvla$ be the central primitive idempotent in $\bfHr$   acting on the Specht module $S_\up^\mu$ as  $\delta_{\lambda, \mu}$ for $\mu \in \Par$. By \cite[Lem. 2.2]{WW} we have
\begin{equation}\label{vep la}
\epvla=\frac{1}{\kappa_\la(q)}\sum_{\mu\in\msPr} \chi^\la_v(T_{w_\mu})f_\mu^*,
\end{equation}
where $\chi^\la_v$ is the irreducible character of $\bfHr$ such that
the specialization of $\chi^\la_v$ at $v=1$ coincides with the character $\chi^\la$
of the Specht module of $\frak{S}_r$ associated to $\la$.

Wan and Wang \cite[3.7]{WW} established a linear isomorphism
\begin{equation}\label{def psir}
\begin{split}
\psi_r:& \ZHr \longrightarrow \Lar_v\\
&\qquad\qquad  f_\lambda^* \mapsto \widetilde{m}_\lambda(x)\ \text{for $\lambda \in \Par$. }
\end{split}
\end{equation}
Under this map, the central primitive idempotents and the Jones basis elements have explicit images, as summarized in the next theorem.

\begin{Thm}\cite{WW}\label{psi_r}
For $\lambda \in \Par$, we have $\psi_r\left(\epvla\right)=\widetilde{s}_\lambda(x)$ and $\psi_r\left(\mathscr{N}_\lambda\left(F_\lambda\right)\right)=\widetilde{p}_\lambda(x)$.
\end{Thm}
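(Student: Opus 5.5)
This is a result quoted from Wan--Wang, so the plan is to verify it from the definition \eqref{def psir} of $\psi_r$ together with \eqref{vep la} and standard character theory.

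\emph{Idempotents.} By \eqref{vep la} and linearity of $\psi_r$,
\[
\psi_r(\epvla)=\frac{1}{\kappa_\la(q)}\sum_{\mu\in\msPr}\chi^\la_v(T_{w_\mu})\,\widetilde m_\mu(x).
\]
Substituting \eqref{tilde m_lambda(x)}, the coefficient of $p_\nu(x)$ becomes
\[
\frac{1}{\kappa_\la(q)}\cdot\frac{(q-1)^{\ell(\nu)}}{\prod_s(q^{\nu_s}-1)}\sum_\mu \chi^\la_v(T_{w_\mu})L_{\mu,\nu}.
\]
To identify this with $\widetilde s_\la=\kappa_\la(q)^{-1}s_\la$, I need the Hecke analogue of the Frobenius formula. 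Expanding $s_\la=\sum_\nu z_\nu^{-1}\chi^\la(\nu)p_\nu$, the required identity is
\[
\frac{(q-1)^{\ell(\nu)}}{\prod_s(q^{\nu_s}-1)}\sum_\mu \chi^\la_v(T_{w_\mu})L_{\mu,\nu}=z_\nu^{-1}\chi^\la(\nu).
\]
This is a Ram-type statement: the characters $\chi^\la_v(T_{w_\mu})$ are the coefficients of $s_\la$ in a $q$-deformed monomial-type basis. Concretely, I would invoke Ram's Frobenius formula
\[
q^{\ell(\mu)}\ldots\;\sum_\la \chi^\la_v(T_{w_\mu})s_\la=\prod_i \tilde h_{\mu_i},
\]
and invert it using the orthogonality of $\{p_\nu\}$. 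This matching of $q$-Frobenius formulas is the main obstacle, and it is exactly where the normalizations in $\widetilde m$ come from. A cleaner route is to check the identity only on the $q$-power-sum transition, since $m_\mu=\sum L_{\mu\nu}p_\nu$ means $(L_{\mu\nu})$ is the inverse of the transition matrix from $p$ to $m$.

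\emph{Jones basis.} For $\mathscr N_\la(F_\la)$, I would expand it in the idempotent basis. Because $\mathscr N_\la$ is a relative trace (norm map) from the parabolic subalgebra $\bsH_\la$, we have
\[
\mathscr N_\la(F_\la)=\sum_\mu c_\mu\,\epvmu,\qquad c_\mu=\frac{\kappa_\mu}{\dim}\cdot\bigl(\text{trace of }F_\la\text{ on }\mathrm{Res}\,S^\mu_v\bigr).
\]
By the Jones--Geck--Rouquier computation, $f^*_{(k)}$ acts on $S^{\nu}_v(\fS_k)$ by a scalar proportional to $\chi^\nu_v(T_{w_{(k)}})/\kappa_\nu$. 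This reduces $c_\mu$ to a sum over Littlewood--Richardson restrictions of products of one-part characters.

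Applying $\psi_r$ gives $\sum_\mu c_\mu\kappa_\mu^{-1}s_\mu$, and the one-part case $\la=(r)$ shows $\widetilde p_{(r)}=\psi_r(f^*_{(r)})$. This uses $\widetilde m_{(r)}=(q-1)(q^r-1)^{-1}p_r$, since $L_{(r),\nu}=\delta_{\nu,(r)}$; so the one-part case is direct. Multiplicativity then finishes the argument: the norm map corresponds to the induction product, which $\psi_r$ (through the characteristic map) sends to the product of symmetric functions. Hence
\[
\psi_r\bigl(\mathscr N_\la(F_\la)\bigr)=\prod_i \widetilde p_{(\la_i)}=\widetilde p_\la.
\]
Establishing this compatibility of $\psi_r$ with induction is the second key step.
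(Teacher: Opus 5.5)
The paper does not prove this statement; it is quoted from \cite{WW}. Your sketch therefore has to stand on its own, and as written it has two genuine gaps.

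\emph{First gap (idempotents).} You have misplaced the normalization in \eqref{tilde m_lambda(x)}. The factor there is $(q-1)^{\ell(\mu)}$, where $\mu$ is the index of $\widetilde m_\mu$ (your summation variable), not $(q-1)^{\ell(\nu)}$. As a result, the identity you reduce to is false. For $r=2$ and $\lambda=\nu=(2)$ one has $L_{(2),(2)}=1$, $L_{(1^2),(2)}=-\tfrac12$ and $\chi^{(2)}_v(T_1)=q$, so your left side is $\frac{2q-1}{2(q+1)}\neq\tfrac12$. The correct target is $\sum_\mu(q-1)^{\ell(\mu)}\chi^\lambda_v(T_{w_\mu})L_{\mu\nu}=z_\nu^{-1}\chi^\lambda(\nu)\prod_s(q^{\nu_s}-1)$. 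Saying ``invoke Ram and invert'' with an unspecified normalization leaves out exactly the content that is needed.

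This part can be closed cleanly:
\begin{itemize}
\item Let $\theta$ be the ring automorphism of $\Lambda_v$ with $\theta(p_k)=(q^k-1)p_k$, and set $Q_\mu=(q-1)^{-\ell(\mu)}\theta(h_\mu)$.
\item In this paper's conventions, Ram's formula reads $\sum_\lambda\chi^\lambda_v(T_{w_\mu})s_\lambda=Q_\mu$; for example $Q_{(2)}=qs_{(2)}-s_{(1^2)}$.
\item The map $\theta$ is self-adjoint for the Hall form, and $\langle m_\mu,h_\nu\rangle=\delta_{\mu\nu}$. Hence $\widetilde m_\mu=(q-1)^{\ell(\mu)}\theta^{-1}(m_\mu)$ is exactly the dual basis of $\{Q_\mu\}$.
\item Therefore $\sum_\mu\chi^\lambda_v(T_{w_\mu})\widetilde m_\mu=\sum_\mu\langle s_\lambda,Q_\mu\rangle\widetilde m_\mu=s_\lambda$, and \eqref{vep la} then gives $\psi_r(\varepsilon^\lambda_v)=\widetilde s_\lambda$.
\end{itemize}

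\emph{Second gap (Jones basis).} Your one-part case is correct: $\mathscr D_{(r)}=\{1\}$ and $\widetilde m_{(r)}=\widetilde p_{(r)}$. Everything else, however, rests on the compatibility $\psi(\mathscr N_{(a,b)}(zz'))=\psi(z)\psi(z')$, which you only assert. This multiplicativity is one of the main results of \cite{WW}, so appealing to it is not an independent proof.

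The ingredients you sketch toward it are the $q=1$ formulas, and they fail for generic $q$.
\begin{itemize}
\item Let $B=\mathcal H_{(a,b)}$, with central idempotents $\varepsilon^\nu_B$ and Schur elements $\kappa^B_\nu$, and let $c^\mu_\nu$ be the multiplicity in $\mathrm{Res}\,S^\mu_v$ of the simple $B$-module labelled by $\nu$. Identify $\mathscr N$ with the relative trace of $\tau$ (dual bases $T_w$ and $q^{-\ell(w)}T_{w^{-1}}$). Then $\mathscr N(\varepsilon^\nu_B)=\sum_\mu\frac{\kappa_\mu}{\kappa^B_\nu}c^\mu_\nu\varepsilon^\mu_v$. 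So the constituents are weighted by $1/\kappa^B_\nu$, not uniformly by a dimension, and these weights are not proportional: $\kappa_{(2)}=1+q$ while $\kappa_{(1^2)}=1+q^{-1}$.
\item Likewise, $f^*_{(k)}$ does not act on $S^\nu_v$ by a multiple of $\chi^\nu_v(T_{w_{(k)}})/\kappa_\nu$. For instance, $f^*_{(2)}=q^{-1}T_1$ acts by $1$ and $-q^{-1}$, whereas $\chi^\nu_v(T_1)/\kappa_\nu=\pm q/(1+q)$.
\end{itemize}

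With the correct relative-trace formula the argument goes through. Use $\kappa^B_{(\alpha,\beta)}=\kappa_\alpha\kappa_\beta$, the Littlewood--Richardson rule, and the first part to get $\psi(\mathscr N_{(a,b)}(\varepsilon^\alpha_v\varepsilon^\beta_v))=\widetilde s_\alpha\widetilde s_\beta$. Combine this with transitivity of $\mathscr N$ (factorization of distinguished coset representatives) and your one-part case. This gives $\psi_r(\mathscr N_\lambda(F_\lambda))=\prod_i\widetilde p_{(\lambda_i)}=\widetilde p_\lambda$.
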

This theorem is the key that allows us to translate linear relations among Hecke center elements into relations among symmetric functions, and hence to transfer the bases to the quantum Schur superalgebra.

\subsection{Transfer of the Jones basis}
We now apply the isomorphism $\psi_r$ to obtain the first central basis of  namely the image of the Jones basis under $\eta_r$.

\begin{Thm}\label{Jones basis}
The set $$\{\eta_r(\mathscr{N}_\lambda(F_\lambda)) \mid \lambda \in \Hpowmnr\}$$ forms a basis of the center $\ZSq$ of the quantum Schur superalgebra $\Sqmnr$.
\end{Thm}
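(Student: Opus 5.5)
Throughout I write $\eta_r$ for the map $\Er$ of Theorem \ref{doublecentralizer} restricted to $\ZHr$.

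\emph{Step 1: $\eta_r$ maps $\ZHr$ onto $\ZSq$, with kernel spanned by idempotents.} By Theorem \ref{doublecentralizer}, $\Sqmnr$ is the centralizer of $\Er(\bfHr^{\rm op})$, and conversely. Hence
\[
\ZSq=\Sqmnr\cap \Er(\bfHr^{\rm op})=\Er(\ZHr),
\]
where the last equality uses that the image of the semisimple algebra $\bfHr$ is its quotient by the ideal of blocks acting as zero. From the decomposition \eqref{Qdecom}, $\eta_r(\epvla)$ is the projection onto $L_v(\la)\otimes S_v^\la$. This projection is nonzero exactly when $\la\in\Hmnr$. Therefore $\ker\eta_r$ is spanned by $\{\epvla\mid \la\notin\Hmnr\}$, and the elements $\eta_r(\epvla)$ with $\la\in\Hmnr$ form a basis of $\ZSq$. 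In particular $\dim\ZSq=\#\Hmnr=\#\Hpowmnr$. For the last equality, note that conjugation of partitions gives a bijection $H(m|n,r)\cong H(n|m,r)$.

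\emph{Step 2: identify $\ker\eta_r$ with $\ker\Phimn$.} By Theorem \ref{psi_r}, $\psi_r(\epvla)=\tilde s_\la(x)=\kappa_\la(q)^{-1}s_\la(x)$. By Theorem \ref{super Schur function}, $\Phimn(s_\la)=0$ exactly when $\la\notin\Hmnr$, and the $\hs_\la$ with $\la\in\Hmnr$ are linearly independent. Hence $\psi_r(\ker\eta_r)=\ker(\Phimn|_{\Lar_v})$. Consequently there is a well-defined linear isomorphism
\[
\theta:\ZSq\to\Lamnrv,\qquad \theta(\eta_r(z))=\Phimn(\psi_r(z)),
\]
and this map is bijective by a dimension count, since $\Phimn:\Lar_v\to\Lamnrv$ is surjective.

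\emph{Step 3: transport the basis.} By Theorem \ref{psi_r},
\[
\theta\bigl(\eta_r(\msN_\la(F_\la))\bigr)=\Phimn(\tilde p_\la)=c_\la(q)\,\hpla,
\]
where $c_\la(q)=(q-1)^{\ell(\la)}\prod_i(q^{\la_i}-1)^{-1}$ is a nonzero scalar in $\mbc(v)$. By Theorem \ref{thm:corrected-power-product}, the set $\{\hpla\mid\la\in\Hpowmnr\}$ is a $\mbc$-basis of $\Lamnr$, hence a $\mbc(v)$-basis of $\Lamnrv$. Pulling back along the isomorphism $\theta$ shows that $\{\eta_r(\msN_\la(F_\la))\mid \la\in\Hpowmnr\}$ is a basis of $\ZSq$.

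The step needing the most care is Step 1, namely the surjectivity $\Er(\ZHr)=\ZSq$ and the description of the kernel. I would settle this by the semisimple Wedderburn argument: the image of $\bfHr$ is $\prod_{\la\in\Hmnr}\End(S^\la_v)$, and its center is spanned by the block idempotents $\eta_r(\epvla)$.
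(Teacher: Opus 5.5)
Your proposal is correct and is essentially the paper's argument. The paper expands $\mathscr{N}_\lambda(F_\lambda)=\sum_\mu b_{\lambda,\mu}\epvmu$, applies $\eta_r$ on one side and $\Phimn\circ\psi_r$ on the other to get the same coefficient matrix $(b_{\lambda,\mu})_{\lambda\in\Hpowmnr,\,\mu\in\Hmnr}$, and deduces that this matrix is invertible from Theorems \ref{super Schur function} and \ref{thm:corrected-power-product}. Your isomorphism $\theta$, which sends $\eta_r(\epvmu)\mapsto\kappa_\mu(q)^{-1}\hs_\mu$, is the coordinate-free version of that matrix argument.

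Your Step 1 and the conjugation bijection $H(m|n,r)\cong H(n|m,r)$ also make explicit two facts the paper uses without proof: that $\{\eta_r(\epvla)\mid\la\in\Hmnr\}$ is a basis of $\ZSq$, and that the transition matrix is square.
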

\begin{proof}
 For $\lambda \in \Hmnr$ let $\tepvla$
denote the central primitive idempotent in $\Sqmnr$ whose action on $L_\up(\mu)$  is scalar multiplication by $\delta_{\lambda, \mu}$ for $\mu \in \Hmnr$. By \eqref{Qdecom} and \cite[(3.9.3)]{Fu21}, for any $\lambda \in \Par$,
\begin{equation}\label{eta_r}
  \eta_r(\epvla)= \begin{cases}\tepvla & \text { if } \lambda \in \Hmnr \\ 0 & \text { otherwise }.\end{cases}
\end{equation}

  For $\lambda \in \Par$ write
\begin{equation}\label{N_la}
  \mathscr{N}_\lambda\left(F_\lambda\right)=\sum_{\mu \in \Par} b_{\lambda, \mu} \epvmu
\end{equation}
where $b_{\lambda, \mu} \in \mathbb{C}(\up)$. Applying \eqref{eta_r} gives, for $\lambda \in \Hpowmnr$,
\begin{equation}\label{eta_rN_la}
  \eta_r\left(\mathscr{N}_\lambda\left(F_\lambda\right)\right)=\sum_{\mu \in \Hmnr} b_{\lambda, \mu} \tepvmu .
\end{equation}
On the other hand, by Theorem \ref{psi_r} and \eqref{N_la}, we have, for $\lambda \in \Par$,
$$
\widetilde{p}_\lambda(x)=\sum_{\mu \in \Par} b_{\lambda, \mu} \widetilde{s}_\mu(x).
$$
Applying $\Phimn$ and using Theorem \ref{super Schur function}, we obtain, for $\lambda \in \Hpowmnr$,
$$
\widetilde{\hp}_\lambda (x_1, \dots, x_m;y_1, \dots, y_n )=\sum_{\mu \in \Hmnr} b_{\lambda, \mu} \widetilde{\hs}_\mu (x_1, \dots, x_m; y_1,\dots,y_n ),
$$
where $\widetilde{\hp}_\lambda (x_1, \dots, x_m;y_1, \dots, y_n )=\Phimnv (\widetilde{p}_\lambda(x))$ and $\widetilde{\hs}_\mu (x_1, \dots, x_m; y_1,\dots,y_n)=\Phimnv(\widetilde{\hs}_\mu(x))$.
By Theorems \ref{super Schur function} and \ref{thm:corrected-power-product} that the matrix $$B= \left(b_{\lambda, \mu}\right)_{\lambda\in\Hpowmnr, \mu \in \Hmnr}$$ is invertible. Hence by \eqref{eta_rN_la} the set $\left\{\eta_r\left(\mathscr{N}_\lambda\left(F_\lambda\right)\right) \mid \lambda \in \Hpowmnr\right\}$ forms a basis of $\ZSq$, because $\left\{\tepvla \mid \lambda \in \Hmnr\right\}$ is already a basis of $\ZSq$.
\end{proof}

\subsection{Transfer of the Geck--Rouquier basis via integral forms}
In this subsection we prove that the image of the Geck--Rouquier basis $\{f_\lambda^*\}$ under the duality map $\eta_r$ gives a basis of the center of the quantum Schur superalgebra, indexed by the set $\Hpow(m|n,r)$. The argument uses an integral form of the Hecke algebra and specialization at $v=1$ to reduce the invertibility of the transition matrix to the classical symmetric group case.

Let \(
\R=\mathbb{C}[v]_{(v-1)}\) and define
\[
{\mathcal H}_\R(\frak{S}_r)=\bigoplus_{w\in\frak{S}_r} \R\,T_w \subseteq {\mathcal H}_v(\frak{S}_r).
\]
Denote by \(\mathscr Z(\mathcal H_\R(\frak S_r))\) the center of \({\mathcal H}_\R(\frak{S}_r)\). We first record a standard flatness fact.

\begin{Lem}\label{flat}
For any commutative ring \(A\) and any multiplicative subset \(S\), the localization \(S^{-1}A\) is a flat \(A\)-module.
\end{Lem}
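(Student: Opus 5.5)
The plan is to prove that localization is flat by reducing it to the statement that $S^{-1}(-)$ is an exact functor on $A$-modules. The reduction rests on the natural isomorphism $S^{-1}A\otimes_A M\cong S^{-1}M$, given by $\frac{a}{s}\otimes x\mapsto \frac{ax}{s}$. Since tensoring is always right exact, flatness of $S^{-1}A$ amounts to showing that $S^{-1}A\otimes_A-$ preserves injections. Through the isomorphism, this becomes the claim that $S^{-1}(-)$ preserves injections.

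First, construct the map $S^{-1}A\otimes_A M\to S^{-1}M$. It comes from the $A$-bilinear map $(\frac{a}{s},x)\mapsto\frac{ax}{s}$, which one checks is well defined on fractions. The inverse is $\frac{x}{s}\mapsto \frac1s\otimes x$. Checking that this inverse is well defined is the one place where care is needed. Suppose $\frac{x}{s}=\frac{x'}{s'}$, so that $u(s'x-sx')=0$ for some $u\in S$. Then
\[
\tfrac1s\otimes x=\tfrac{us'}{uss'}\otimes x=\tfrac{1}{uss'}\otimes us'x=\tfrac{1}{uss'}\otimes usx'=\tfrac1{s'}\otimes x'.
\]
Both composites are the identity on generators, and every element of the tensor product can be written as a single pure tensor $\frac1s\otimes x$ by taking a common denominator. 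The isomorphism is natural in $M$.

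Second, let $f:M\to N$ be injective and suppose $\frac{x}{s}\in S^{-1}M$ maps to $0$ in $S^{-1}N$. Then $\frac{f(x)}{s}=0$, so there is $u\in S$ with $uf(x)=f(ux)=0$. Injectivity of $f$ gives $ux=0$, hence $\frac{x}{s}=0$ in $S^{-1}M$. So $S^{-1}f$ is injective, and combining this with the natural isomorphism shows that $S^{-1}A$ is flat.

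The only step with real content is the well-definedness of the inverse map in the first step. Alternatively, one can simply cite a standard commutative algebra text, such as Atiyah–Macdonald, Prop. 3.3 and 3.7.
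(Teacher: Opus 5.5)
Your proof is correct and takes the same route as the paper: the natural isomorphism $S^{-1}A\otimes_A M\cong S^{-1}M$ combined with exactness (in particular, preservation of injections) of $S^{-1}(-)$. The only difference is that you verify both facts by hand, while the paper cites them; note also a small citation slip, since the tensor isomorphism is Prop.~3.5 of Atiyah--Macdonald (flatness itself is Cor.~3.6), not Prop.~3.7.
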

\begin{proof}
By \cite[{Prop 3.3 \& 3.5}]{AM}, the localization functor \(S^{-1}\) is exact, and for every \(A\)-module \(M\) there is a natural isomorphism
\(
S^{-1}M \cong M \otimes_A S^{-1}A.
\)
Hence the tensor product functor $-\otimes_A S^{-1}A$ is exact, which by definition means that $S^{-1}A$ is flat over $A$.
\end{proof}
Using flatness, we can compare the $\R$-center with its base changes.
\begin{Prop}\label{rank}
Let \(p=|\mathscr P_r|\). Then:
\begin{enumerate}
\item \(\operatorname{rank}_\R \mathscr Z(\mathcal H_\R(\frak S_r)) = \dim_{\mathbb{C}(v)} \mathscr Z({\mathcal H}_v(\frak{S}_r))=p.\)
\item There is a canonical isomorphism \(\mathscr Z(\mathcal H_\R(\frak S_r))\otimes_\R \mathbb C \cong \mathscr Z(\mathbb C\frak{S}_r)\), where \(\mathbb C\) is viewed as an \(\R\)-module via \(v\mapsto 1\).
\end{enumerate}
\end{Prop}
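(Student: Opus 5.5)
The approach is to view the center as the kernel of a linear map defined over $\R$ and compare its behaviour under the two base changes $\R\to\mathbb C(v)$ and $\R\to\mathbb C$.

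Write $\mathcal H_\R=\mathcal H_\R(\frak S_r)$. It is a free $\R$-module with basis $\{T_w\}$. Define the $\R$-linear map
\[
\delta_\R:\mathcal H_\R\longrightarrow \bigoplus_{i=1}^{r-1}\mathcal H_\R,\qquad h\mapsto \bigl(hT_i-T_ih\bigr)_{i},
\]
so that $\mathscr Z(\mathcal H_\R)=\ker\delta_\R$. The $T_i$ generate $\mathcal H_\R$ as an $\R$-algebra, so centralizing all $T_i$ is the same as being central.

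For part (1), note that $\mathbb C(v)$ is the fraction field of $\R$, hence a localization of $\R$. By Lemma \ref{flat} it is flat over $\R$, so taking kernels commutes with $-\otimes_\R\mathbb C(v)$. This gives $\ker\delta_\R\otimes_\R\mathbb C(v)=\ker\delta_{\mathbb C(v)}=\mathscr Z(\mathcal H_v(\frak S_r))$. Since $\R$ is a DVR, in particular a PID, the submodule $\ker\delta_\R$ of the free module $\mathcal H_\R$ is free, and its rank equals $\dim_{\mathbb C(v)}\mathscr Z(\mathcal H_v(\frak S_r))$. Theorem \ref{GR basis} says this dimension is $p$. (Alternatively, $\mathcal H_v(\frak S_r)$ is split semisimple with $p$ simple modules $S_v^\lambda$.)

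For part (2), there is a natural map $\mathscr Z(\mathcal H_\R)\otimes_\R\mathbb C\to\mathscr Z(\mathbb C\frak S_r)$ induced by $T_w\mapsto w$. Both sides have dimension $p$, the right side by the class-sum basis, so it suffices to show the map is injective. Here the main obstacle appears: specialization need not commute with kernels, so I would argue that $\ker\delta_\R$ is a pure (saturated) submodule of $\mathcal H_\R$. Suppose $h\in\mathcal H_\R$ and $(v-1)h\in\ker\delta_\R$. Then $(v-1)\delta_\R(h)=0$, and since the target of $\delta_\R$ is torsion-free, $\delta_\R(h)=0$. Hence the quotient $\mathcal H_\R/\ker\delta_\R$ is torsion-free, and therefore free over the PID $\R$. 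So $\ker\delta_\R$ is a direct summand of $\mathcal H_\R$, and $\ker\delta_\R\otimes_\R\mathbb C\to\mathcal H_\R\otimes_\R\mathbb C=\mathbb C\frak S_r$ is injective. Its image lies in $\mathscr Z(\mathbb C\frak S_r)$, because the specialization of $\delta_\R$ is the commutator map for $\mathbb C\frak S_r$. We thus have an injection of a $p$-dimensional space into the $p$-dimensional space $\mathscr Z(\mathbb C\frak S_r)$, which is an isomorphism.

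The only delicate point is this purity argument. An alternative route to surjectivity is also available: the Geck--Rouquier elements $f_\lambda^*$ have coefficients in $\mathbb Z[v,v^{-1}]\subset\R$, so they lie in $\mathscr Z(\mathcal H_\R)$. At $v=1$ they specialize to the class sums, which are linearly independent. This gives surjectivity directly, and combined with the dimension count it also yields the isomorphism.
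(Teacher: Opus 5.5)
Your argument is correct, and for part (2) it comes at the isomorphism from the opposite side to the paper.

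\textbf{How the paper argues.} For (1), the paper uses flatness of $\mathbb C(v)$ only to get injectivity of $\mathscr Z(\mathcal H_\R(\mathfrak S_r))\otimes_\R\mathbb C(v)\to\mathscr Z(\mathcal H_v(\mathfrak S_r))$. It takes surjectivity from the Geck--Rouquier elements, which have coefficients in $\mathbb Z[v,v^{-1}]\subseteq\R$. For (2), it shows the specialization map is \emph{surjective}, because $f_\lambda^*$ specializes to the class sum $c_\lambda$. Injectivity then follows from the equality of dimensions.

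\textbf{How you argue.} You write the center as $\ker\delta_\R$, so flatness gives the isomorphism in (1) in one step, with both injectivity and surjectivity at once. In (2) you prove \emph{injectivity} structurally. The quotient $\mathcal H_\R(\mathfrak S_r)/\ker\delta_\R$ embeds in a free module, so $\ker\delta_\R$ is saturated and hence a direct summand over the PID $\R$. Surjectivity then comes from the dimension count.

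\textbf{What each buys.} Your route needs no information about how particular central elements behave at $v=1$, beyond both centers having dimension $p$. It also establishes a general fact: for any $\R$-algebra that is free of finite rank, the integral center tensored with $\mathbb C$ injects into the center of the specialized algebra. The paper's route instead pins down the isomorphism on an explicit basis, $f_\lambda^*\otimes 1\mapsto c_\lambda$. The paper needs that fact again anyway when it specializes $f_\lambda^*$ in \eqref{clambda express}.

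The ``alternative route'' in your last paragraph is exactly the paper's proof of (2).
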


\begin{proof}
By \cite[Chapter III Exercise 6]{Hung}  \(\R\) is  a principal ideal domain.
Hence, as $\mathcal{H}_{\R}(\mathfrak{S}_{r})$ is free over $\R$, its center is free of finite rank.
 Consider the \(\mathbb{C}(v)\)-linear map
\[
f:\mathscr Z(\mathcal H_\R(\frak S_r))\otimes_\R \mathbb{C}(v)\longrightarrow \mathscr Z({\mathcal H}_v(\frak{S}_r)),\qquad z\otimes a \longmapsto a z.
\]
By Theorem \ref{GR basis}  \(f\) is surjective.
By Lemma \ref{flat}, \(\mathbb{C}(v)\) is a flat \(\R\)-module.
Hence the inclusion \(\mathscr Z(\mathcal H_\R(\frak S_r))\hookrightarrow {\mathcal H}_\R(\frak{S}_r)\) induces an injective map
\[\mathscr Z(\mathcal H_\R(\frak S_r))\ot_\R\mbc(v)\hookrightarrow {\mathcal H}_\R(\frak{S}_r)\ot_\R\mbc(v)\cong\bfHr.\]
Hence \(f\)  is injective as well, and therefore an isomorphism.
Consequently,
\[
\operatorname{rank}_\R \mathscr Z(\mathcal H_\R(\frak S_r))=\dim_{\mathbb{C}(v)}(\mathscr Z(\mathcal H_\R(\frak S_r))\otimes_\R \mathbb{C}(v))=\dim_{\mathbb{C}(v)}\mathscr Z({\mathcal H}_v(\frak{S}_r))=p.
\]
It follows that
\(
\dim_{\mathbb C}(\mathscr Z(\mathcal H_\R(\frak S_r))\otimes_\R \mathbb C)=p=\dim_{\mathbb C}\mathscr Z(\mathbb C\frak{S}_r).
\) Now define a \(\mathbb C\)-linear map
\[
g:\mathscr Z(\mathcal H_\R(\frak S_r))\otimes_\R \mathbb C \longrightarrow \mathbb C\frak{S}_r,\qquad z\otimes 1 \longmapsto z|_{v=1},
\]
induced by the composite \(\mathscr Z(\mathcal H_\R(\frak S_r))\otimes_\R \mathbb C \rightarrow {\mathcal H}_\R(\frak{S}_r)\otimes_\R \mathbb C \cong \mathbb C\frak{S}_r\), where \(T_w\otimes 1\mapsto w\). Its image lies in the center $\mathscr Z(\mathbb C\frak{S}_r)$. Moreover, the Geck--Rouquier basis \( f_\lambda^* \) specialises at \(v=1\) to the basis of conjugacy class sums, which spans \(\mathscr Z(\mathbb C\frak{S}_r)\); hence the image of \(g\) is $\mathscr Z(\mathbb C\frak{S}_r)$. By the dimension equality, we obtain the isomorphism
\(\mathscr Z(\mathcal H_\R(\frak S_r))\otimes_\R \mathbb C \cong \mathscr Z(\mathbb C\frak{S}_r)\).
\end{proof}
We now show that the primitive central idempotents of the Hecke algebra lie in the integral form and form an $\R$-basis of its center.
\begin{Prop}\label{basis of C(H_R)}
The set \(\{\epvla\mid\la\in\mathscr P_r\}\) is an \(\R\)-basis of \(\mathscr Z(\mathcal H_\R(\frak S_r))\).
\end{Prop}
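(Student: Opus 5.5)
The proof has two parts. First I show that each $\epvla$ lies in $\mathcal H_\R(\frak S_r)$, and hence in $\mathscr Z(\mathcal H_\R(\frak S_r))$. Then I show that the $\R$-submodule they span is the whole center, by reducing at $v=1$ and appealing to Proposition~\ref{rank}.

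\emph{Step 1 (integrality).} Start from \eqref{vep la}:
\[
\epvla=\frac{1}{\kappa_\la(q)}\sum_{\mu\in\msPr}\chi^\la_v(T_{w_\mu})f_\mu^*.
\]
Since $v\notin(v-1)$, $v$ is a unit in $\R=\mathbb C[v]_{(v-1)}$, so $\mbz[v,v^{-1}]\subseteq\R$.

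By \eqref{fla}, each $f_\mu^*$ has coefficients $q^{-\ell(w)}f_{w,\mu}\in\mbz[v,v^{-1}]$, so $f_\mu^*\in\mathcal H_\R(\frak S_r)$.

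The character values $\chi^\la_v(T_{w_\mu})$ lie in $\mbz[v,v^{-1}]$, because $S^\la_v$ has a $\mbz[v,v^{-1}]$-form (for instance the Dipper--James or Murphy basis).

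The Schur element $\kappa_\la(q)\in\mbz[q,q^{-1}]$ specializes at $v=1$ to $r!/\chi^\la(1)\neq 0$. Hence $\kappa_\la(q)\notin(v-1)$ and $\kappa_\la(q)$ is a unit in $\R$.

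Therefore $\epvla\in\mathcal H_\R(\frak S_r)$. It is central in $\bfHr$, so it lies in $\mathscr Z(\mathcal H_\R(\frak S_r))$. The $\epvla$ are linearly independent over $\mbc(v)$, hence over $\R$, so they span a free submodule $M\subseteq\mathscr Z(\mathcal H_\R(\frak S_r))$ of rank $p=|\msPr|$.

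\emph{Step 2 (spanning).} By the proof of Proposition~\ref{rank}, $\mathscr Z(\mathcal H_\R(\frak S_r))$ is free of rank $p$ over the PID $\R$. Choose an $\R$-basis $z_1,\dots,z_p$ and write $\epvla=\sum_i d_{\la i}z_i$ with $D=(d_{\la i})$ a $p\times p$ matrix over $\R$. It suffices to show that $\det D\in\R^\times$, that is, $(\det D)|_{v=1}\neq 0$.

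Tensor with $\mathbb C$ via $v\mapsto1$ and use the isomorphism $g$ of Proposition~\ref{rank}(2). The images $g(z_i\otimes1)$ form a basis of $\mathscr Z(\mathbb C\frak S_r)$. By Step 1, the image of $\epvla$ is its specialization
\[
\frac{\chi^\la(1)}{r!}\sum_{\mu}\chi^\la(w_\mu)\,\overline{C_\mu},
\]
using that $f_\mu^*$ specializes to the class sum $\overline{C_\mu}$ and $\kappa_\la(1)=r!/\chi^\la(1)$. This is exactly the classical primitive central idempotent $\varepsilon^\la$ of $\mathbb C\frak S_r$.

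These idempotents form a basis of $\mathscr Z(\mathbb C\frak S_r)$. Hence $D|_{v=1}$ is invertible, $\det D$ is a unit of $\R$, and $M=\mathscr Z(\mathcal H_\R(\frak S_r))$.

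\emph{Main obstacle.} The delicate point is Step 2, which shows that there is no extra integrality beyond the $\epvla$: a priori the coefficients $a_\la$ in $z=\sum a_\la\epvla$ are only known to lie in $\mbc(v)$. I would go through Proposition~\ref{rank} as described. As a backup, I would argue directly: $a_\la=\chi^\la_v(z)/\chi^\la_v(1)$. The trace of $z\in\mathcal H_\R(\frak S_r)$ on an $\R$-form of $S^\la_v$ lies in $\R$, and $\dim S^\la_v=\chi^\la(1)$ is a nonzero integer, so it is invertible in $\R$.
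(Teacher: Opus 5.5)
Your proposal is correct and follows the paper's proof. You establish integrality of $\varepsilon_v^\lambda$ from \eqref{vep la}, then reduce modulo $(v-1)$ through Proposition~\ref{rank}(2) and the identification $\varepsilon_v^\lambda|_{v=1}=\varepsilon^\lambda$; your unit-determinant argument against a free $\R$-basis is just Nakayama's lemma (which the paper invokes) made explicit. Your backup argument $a_\lambda=\chi_v^\lambda(z)/\chi_v^\lambda(1)\in\R$ is also valid, and it would make Proposition~\ref{rank} unnecessary for this statement.
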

\begin{proof}
By \cite[(2.4)]{WW} we have
$$\kappa_\la(1)=\frac{r!}{d_\la}\not=0,$$
where $d_\la$ is the degree of the irreducible character $\chi^\la$.
Hence ${1}/{\kappa_\la(q)}\in \R$.
Furthermore, by \cite[Th. 4.14]{Ram}, $\chi^\la_v(T_{w_\mu})\in\mbz[v,v^{-1}]\han \R$.
Thus  each \(\epvla \) lies in \(\mathscr Z(\mathcal H_\R(\frak S_r))\).
Set \(\mathfrak m=(v-1)\). Then
\[
\mathscr Z(\mathcal H_\R(\frak S_r))/\mathfrak m\mathscr Z(\mathcal H_\R(\frak S_r)) \cong
\mathscr Z(\mathcal H_\R(\frak S_r))\otimes_\R \R/{\frak m}
\cong \mathscr Z(\mathcal H_\R(\frak S_r))\otimes_\R \mathbb C.
\]
It follows from Proposition~\ref{rank}(2) that
\(
\mathscr Z(\mathcal H_\R(\frak S_r))/\mathfrak m\mathscr Z(\mathcal H_\R(\frak S_r)) \cong \mathscr Z(\mathbb C\frak{S}_r).
\)
Under this isomorphism,  \(\epvla+\mathfrak m\mathscr Z(\mathcal H_\R(\frak S_r))\) corresponds to \(\epvla|_{v=1}\). From \cite[(2.5)]{WW}
\[
\epvla|_{v=1}
=\frac{d_\la}{r!}\sum_{\mu\in\msPr}\chi^\la(w_\mu)c_\mu
=\epla ,
\]
where $c_\mu$ is the class sum and \(\epla\) is the primitive central idempotent of \(\mathbb C\frak{S}_r\) for \(\la\). The set \(\{\epla\mid \la\in\mathscr P_r\}\) form a basis of \(\mathscr Z(\mathbb C\frak{S}_r)\); therefore the elements \(\epvla +\mathfrak m\mathscr Z(\mathcal H_\R(\frak S_r))\) ($\la\in\msPr$)  generate the quotient
\(
\mathscr Z(\mathcal H_\R(\frak S_r))/\mathfrak m\mathscr Z(\mathcal H_\R(\frak S_r)).
\)
By Proposition \ref{rank},
\(\mathscr Z(\mathcal H_\R(\frak S_r))\) is finitely generated over $\R$.
By Nakayama's lemma, the set \(\{\epvla\mid\la\in\mathscr P_r\}\) generates \(\mathscr Z(\mathcal H_\R(\frak S_r))\) as an \(\R\)-module.
Finally, these generators are linearly independent over \(\R\), so they form an \(\R\)-basis.
\end{proof}
We now transfer the Geck--Rouquier basis to the center of the quantum Schur superalgebra. The following theorem shows that the images of these elements form a basis indexed by the set \(\Hpow(m|n,r)\).
\begin{Thm}\label{basis of Sqmnr}
The set $\left\{\eta_r\left(f_\lambda^*\right) \mid \lambda \in \Hpowmnr\right\}$ forms a basis of the center $\ZSq$ of the quantum Schur superalgebra $\Sqmnr$.
\end{Thm}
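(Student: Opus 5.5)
Write $f_\lambda^*=\sum_{\mu\in\Par} a_{\lambda,\mu}\,\epvmu$ with $a_{\lambda,\mu}\in\mathbb C(v)$. Applying \eqref{eta_r} gives
\[
\eta_r(f_\lambda^*)=\sum_{\mu\in\Hmnr}a_{\lambda,\mu}\,\tepvmu .
\]
Since $\{\tepvmu\mid\mu\in\Hmnr\}$ is a basis of $\ZSq$ and $\#\Hpowmnr=\#\Hmnr$ (both equal $\dim\Lamnr$ by Theorems \ref{super Schur function} and \ref{thm:corrected-power-product}), it suffices to show that the square matrix $A=(a_{\lambda,\mu})_{\lambda\in\Hpowmnr,\,\mu\in\Hmnr}$ is invertible over $\mathbb C(v)$.

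We first show that the entries of $A$ lie in $\R$. By Proposition \ref{basis of C(H_R)}, $\{\epvla\}$ is an $\R$-basis of $\mathscr Z(\mathcal H_\R(\frak S_r))$. Moreover $f_\lambda^*\in\mathscr Z(\mathcal H_\R(\frak S_r))$, because $f_{w,\lambda}\in\mbz[v,v^{-1}]$ and $q^{-\ell(w)}\in\R$. Hence every $a_{\lambda,\mu}\in\R$. It is therefore enough to prove that $\det A\notin\mathfrak m$, i.e. that the specialization $A|_{v=1}$ is invertible over $\mathbb C$.

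Next we compute $A|_{v=1}$. Under the isomorphism of Proposition \ref{rank}(2), $f_\lambda^*|_{v=1}$ is the class sum $c_\lambda$ and $\epvmu|_{v=1}=\epmu$. Thus $a_{\lambda,\mu}(1)$ is determined by
\[
c_\lambda=\sum_\mu a_{\lambda,\mu}(1)\,\epmu, \qquad a_{\lambda,\mu}(1)=\frac{|C_\lambda|\,\chi^\mu(w_\lambda)}{d_\mu},
\]
which follows from the scalar action of $c_\lambda$ on the Specht module $S^\mu$. Recall the Frobenius formula $p_\lambda=\sum_\mu\chi^\mu(w_\lambda)s_\mu$. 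Then $A|_{v=1}=D_1\,X\,D_2$, where $D_1=\operatorname{diag}(|C_\lambda|)$, $D_2=\operatorname{diag}(1/d_\mu)$, and $X=(\chi^\mu(w_\lambda))_{\lambda\in\Hpowmnr,\,\mu\in\Hmnr}$. Applying $\Phimn$ to the Frobenius formula and using Theorem \ref{super Schur function} gives
\[
\hp_\lambda=\sum_{\mu\in\Hmnr}\chi^\mu(w_\lambda)\,\hs_\mu ,
\]
so $X$ is the transition matrix from the basis $\{\hs_\mu\mid\mu\in\Hmnr\}$ to $\{\hp_\lambda\mid\lambda\in\Hpowmnr\}$ of $\Lamnr$. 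By Theorem \ref{thm:corrected-power-product}, $X$ is invertible. Hence $A|_{v=1}$ is invertible, so $\det A$ is a unit in $\R$, and in particular $\det A\neq0$ in $\mathbb C(v)$.

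The main obstacle is the second step: justifying that $f_\lambda^*$ specializes to the class sum $c_\lambda$ and that the coefficients of $f_\lambda^*$ in the $\epvmu$-basis lie in $\R$. This uses Proposition \ref{basis of C(H_R)} together with the compatibility of the specialization with the isomorphism of Proposition \ref{rank}(2). After that, the argument reduces entirely to the character table of $\fS_r$ and the power-sum basis of Theorem \ref{thm:corrected-power-product}.
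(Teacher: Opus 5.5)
Your proposal is correct and follows essentially the same route as the paper. You expand $f_\lambda^*$ in the $\R$-basis $\{\epvmu\}$, specialize at $v=1$, and reduce the invertibility of $A$ to the fact that $\{\hp_\lambda\mid\lambda\in\Hpowmnr\}$ is a basis of $\Lamnr$ (Theorem~\ref{thm:corrected-power-product}). The only differences are cosmetic. You derive the $v=1$ identity $\frac{1}{z_\lambda}p_\lambda=\sum_\mu a_{\lambda,\mu}(1)\frac{d_\mu}{r!}s_\mu$ yourself, from the central characters $|C_\lambda|\chi^\mu(w_\lambda)/d_\mu$ and the Frobenius formula, instead of citing Wan--Wang. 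You also state explicitly the integrality $f_\lambda^*\in\mathscr Z(\mathcal H_\R(\frak S_r))$ and the equality $\#\Hpowmnr=\#\Hmnr$, both of which the paper leaves implicit.
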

\begin{proof}
With the \(\R\)-basis of   Proposition \ref{basis of C(H_R)} at hand,
write each Geck--Rouquier element as
\begin{equation}\label{f_la}
  f_\lambda^*=\sum_{\mu \in \Par} a_{\lambda, \mu}\epvmu ,\qquad a_{\lambda,\mu}\in \R.
\end{equation}
Applying \eqref{eta_r}, we obtain for $\lambda \in \msPr$
\begin{equation}\label{eta_rf_la}
  \eta_r\left(f_\lambda^*\right)=\sum_{\mu \in \Hmnr} a_{\lambda, \mu} \tepvmu.
\end{equation}

It remains to show that the matrix \(A=(a_{\lambda,\mu})_{\lambda\in \Hpow,\ \mu\in H}\) (where \(H=H(m|n,r),\,\Hpow=\Hpowmnr\)) is invertible over \(\mbc(v)\). To see this, we specialize the identity \(f_\lambda^*=\sum_\mu a_{\lambda,\mu}\epvmu \) at \(v=1\). The Geck--Rouquier element \(f_\lambda^*\) specializes to the conjugacy class sum \(c_\lambda\), and each primitive idempotent \(\epvmu\) specializes to the primitive central idempotent \(\epmu\) of \(\mathbb{C}\mathfrak{S}_r\). Hence
\begin{equation}\label{clambda express}
c_\lambda=\sum_{\mu\in\msPr} a_{\lambda,\mu}(1)\,\epmu ,\qquad \lambda\in\msPr.
\end{equation}
By \cite[(4.7) \& (5.1)]{WW} we have
\[\frac 1{z_\la} p_\la(x)
=\sum_{\mu \in \Par} a_{\lambda, \mu}(1)\frac{d_\mu}{r!} s_\mu(x)\]
where $z_\la=\prod_{1\leq i\leq k}i^{m_i}m_i!$ for $\la=(k^{m_k},\cdots,2^{m_2},1^{m_1})\in\msPr$ and
$d_\la$ is the degree of the irreducible character $\chi^\la$.
Applying $\Phimn$
  to both sides  and using Theorem \ref{super Schur function}  (which implies that $\Phimn(s_\mu(x))=0$ for $\mu\not\in H$), we obtain
\[\frac 1{z_\la} \hp_\la(x_1,\dots,x_m;y_1,\dots,y_n)=
\sum_{\mu \in \Hmnr} a_{\lambda, \mu}(1)\frac{d_\mu}{r!} \hs_\mu(x_1,\dots,x_m;y_1,\dots,y_n).\]
By  Theorems  \ref{super Schur function} and \ref{thm:corrected-power-product},
 the determinant
\begin{equation}\label{det}
\det\left(\frac{d_\mu}{r!}a_{\lambda,\mu}(1)\right)_{\lambda\in \Hpow,\ \mu\in H}\neq 0.
\end{equation}
Since \(d_\mu/r!\neq 0\), we have \(\det A(1)\neq 0\), where \(A(1)=(a_{\lambda,\mu}(1))_{\lambda\in \Hpow,\ \mu\in H}\). Hence \(\det A\neq 0\) in \(\R\).
Therefore  $A$ is invertible over $\mbc(v)$, and by  \eqref{eta_rf_la},  the elements $\eta_r(f_\lambda^*)$ for $\lambda \in \Hpowmnr$ form a basis of $\ZSq$.
\end{proof}

\subsection{Transfer of the Geck--Rouquier basis via power-series specialization}
We now give an alternative transfer of the Geck--Rouquier basis, which yields a basis indexed by the full set of $(m|n)$-hook partitions. This method uses an embedding $\mathbb{C}(q)\hookrightarrow \mathbb{C}((t))$, $q\mapsto t^{-1}$, and a general lemma about bases of power series.

\begin{Lem} \label{lem:t-adic-basis}
Let $M$ be an $N$-dimensional vector space over $\C$.
\begin{enumerate}
\item[(1)] If
$
u_1(t),\ldots,u_N(t)\in M\otimes_{\mathbb C}\mathbb C[[t]]$
and their specializations \(u_1(0),\ldots,u_N(0)\) form a basis of \(M\), then the \(u_i(t)\) form a basis of \(M\otimes_{\mathbb C}\mathbb C((t))\) over \(\mathbb C((t))\).

\item[(2)] Assume further  that the \(u_i(t)\) are obtained from elements
\[
\widetilde u_1,\ldots,\widetilde u_N\in M\otimes_{\mathbb C}\mathbb C(q)
\]
via the embedding \(\mathbb C(q)\hookrightarrow \mathbb C((t))\), \(q\mapsto t^{-1}\). Then the \(\widetilde u_i\) form a basis of \(M\otimes_{\mathbb C}\mathbb C(q)\).
\end{enumerate}
\end{Lem}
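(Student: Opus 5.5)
Fix a basis $b_1,\dots,b_N$ of $M$, so that $M\otimes_{\mathbb C}\mathbb C[[t]]\cong\mathbb C[[t]]^N$, and record each $u_i(t)$ as the $i$-th column of an $N\times N$ matrix $U(t)$ with entries in $\mathbb C[[t]]$. Both parts then reduce to showing that a determinant is nonzero.

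For part (1), put $\Delta(t)=\det U(t)$. Since the determinant is a polynomial in the entries and evaluation at $t=0$ is a ring homomorphism $\mathbb C[[t]]\to\mathbb C$, we have $\Delta(0)=\det U(0)$. The columns of $U(0)$ are the coordinate vectors of $u_1(0),\dots,u_N(0)$, which form a basis of $M$, so $\det U(0)\neq0$. A power series with nonzero constant term is a unit in $\mathbb C[[t]]$, so $U(t)$ is already invertible over $\mathbb C[[t]]$ and hence over $\mathbb C((t))$. Because $M\otimes\mathbb C((t))$ has dimension $N$ over $\mathbb C((t))$, the $N$ vectors $u_i(t)$ form a basis.

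For part (2), represent the $\widetilde u_i$ in the same basis $b_j$ by a matrix $\widetilde U$ with entries in $\mathbb C(q)$. Let $\iota:\mathbb C(q)\to\mathbb C((t))$ be the embedding $q\mapsto t^{-1}$. It is well defined because a nonzero polynomial $g(q)$ maps to $g(t^{-1})=t^{-\deg g}\,(\text{leading coeff}+\dots)$, which is invertible in $\mathbb C((t))$. Since $\iota$ is a field homomorphism it is injective, and applying it entrywise gives $\iota(\widetilde U)=U(t)$. Hence $\iota(\det\widetilde U)=\det U(t)\neq0$ by part (1), and injectivity gives $\det\widetilde U\neq0$ in $\mathbb C(q)$. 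Therefore the $\widetilde u_i$ are $N$ linearly independent vectors in the $N$-dimensional $\mathbb C(q)$-space $M\otimes\mathbb C(q)$, so they form a basis.

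There is no real obstacle here. The only points to check are that $q\mapsto t^{-1}$ extends to $\mathbb C(q)$ (every nonzero polynomial becomes invertible), and that taking determinants commutes with both evaluation at $t=0$ and the embedding $\iota$. Both follow because the determinant is a polynomial expression in the entries and the maps involved are ring homomorphisms.
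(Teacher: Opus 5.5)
Your proof is correct and follows essentially the same argument as the paper. Both encode the $u_i(t)$ as the columns of a matrix $U(t)$, deduce $\det U(t)\neq 0$ in $\mathbb C((t))$ from $\det U(0)\neq 0$, and then pull this nonvanishing back along the injective embedding $q\mapsto t^{-1}$; your extra observations (that $\det U(t)$ is a unit in $\mathbb C[[t]]$ and that the embedding is well defined) are harmless refinements not needed for the conclusion.
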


\begin{proof}
Choose a basis of \(M\), and let \(U(t)\) be the \(N\times N\) matrix whose columns are the coordinate vectors of the \(u_i(t)\). Since \(u_1(0),\ldots,u_N(0)\) form a basis of \(M\), we have \(\det U(0)\neq 0\). Hence \(\det U(t)\) is a nonzero element of \(\mathbb C[[t]]\), so it is nonzero in \(\mathbb C((t))\). Therefore $u_1(t),\ldots,u_N(t)$ are linearly independent over \(\mathbb C((t))\), and since there are exactly \(N\) of them, they form a basis of \(M\otimes_{\mathbb C}\mathbb C((t))\). This proves (1).

For (2), let \(D(q)\in \mathbb C(q)\) be the determinant of the matrix formed by the coordinate vectors of the \(\widetilde u_i\) with respect to the same basis of \(M\). Under the embedding \(q\mapsto t^{-1}\), the image of \(D(q)\) is exactly \(\det U(t)\), which is nonzero in \(\mathbb C((t))\). Since the embedding is injective, \(D(q)\neq 0\) in \(\mathbb C(q)\). Thus the \(\widetilde u_i\) are linearly independent over \(\mathbb C(q)\) and hence form a basis.
\end{proof}

Recall the elements
$\mathrm{hm}_\lambda(x_1,\dots,x_m;y_1,\dots,y_n)$ defined in \eqref{hmla}.
For $\lambda\in\msPr $ let $$\ti{\hm}_\la(x_1,\dots,x_m;y_1,\dots,y_n)=\Phi_{m|n}(\widetilde m_\lambda(x)),$$
where $\widetilde m_\lambda(x)$ is defined in \eqref{tilde m_lambda(x)}.
\begin{Lem} \label{lem:modified-specialization}
For $\lambda\in\msPr $, set $t=q^{-1}$ and
\[
        G_\lambda(t):=q^r(q-1)^{-\ell(\lambda)}\ti{\hm}_\la(x_1,\dots,x_m;y_1,\dots,y_n).
\]
Then $G_\lambda(t)\in \Lambda^r_{m|n}\otimes_\C\C[[t]]$ and
$G_\lambda(0)=\mathrm{hm}_\lambda(x_1,\dots,x_m;y_1,\dots,y_n)$.
\end{Lem}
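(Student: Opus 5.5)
The plan is to expand $\widetilde{m}_\lambda(x)$ in power sums, apply $\Phimn$ term by term, and observe that the $q$-dependent coefficients become power series in $t$ once the prefactor $q^r(q-1)^{-\ell(\lambda)}$ is included. Throughout, $\mathbb C(q)$ is regarded inside $\mathbb C((t))$ via $q\mapsto t^{-1}$, so "$G_\lambda(t)\in\Lambda^r_{m|n}\otimes_\C\C[[t]]$" means the image of $G_\lambda$ under this embedding has coefficients in $\C[[t]]$.

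\emph{Step 1.} By \eqref{tilde m_lambda(x)} and the $\mathbb C(v)$-linearity of $\Phimn$,
\[
\ti{\hm}_\la(x_1,\dots,x_m;y_1,\dots,y_n)=\sum_{\mu\in\msPr}L_{\lambda,\mu}\frac{(q-1)^{\ell(\lambda)}}{\prod_{1\le s\le \ell(\mu)}(q^{\mu_s}-1)}\,\hp_\mu(x_1,\dots,x_m;y_1,\dots,y_n).
\]
Here each $\hp_\mu=\Phimn(p_\mu(x))$ lies in $\Lambda^r_{m|n}$, and it does not depend on $q$. Multiplying by $q^r(q-1)^{-\ell(\lambda)}$ cancels the factor $(q-1)^{\ell(\lambda)}$ exactly. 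This gives
\[
G_\lambda(t)=\sum_{\mu\in\msPr}L_{\lambda,\mu}\,\frac{q^r}{\prod_{s}(q^{\mu_s}-1)}\,\hp_\mu .
\]

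\emph{Step 2.} This is the only point needing care. Since $\mu\in\msPr$, we have $\sum_s\mu_s=r$, so $q^r=\prod_s q^{\mu_s}$. Hence
\[
\frac{q^r}{\prod_s(q^{\mu_s}-1)}=\prod_s\frac{q^{\mu_s}}{q^{\mu_s}-1}=\prod_s\frac{1}{1-q^{-\mu_s}}=\prod_s\frac{1}{1-t^{\mu_s}}.
\]
Each factor $(1-t^{\mu_s})^{-1}=\sum_{k\ge0}t^{k\mu_s}$ lies in $\C[[t]]$ and has constant term $1$, so the product does too. Since the $L_{\lambda,\mu}$ are in $\C$ and the $\hp_\mu$ are in $\Lambda^r_{m|n}$, it follows that $G_\lambda(t)\in\Lambda^r_{m|n}\otimes_\C\C[[t]]$.

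\emph{Step 3.} Setting $t=0$ replaces every product $\prod_s(1-t^{\mu_s})^{-1}$ by $1$. This yields
\[
G_\lambda(0)=\sum_\mu L_{\lambda,\mu}\hp_\mu=\Phimn\Big(\sum_\mu L_{\lambda,\mu}p_\mu(x)\Big)=\Phimn(m_\lambda(x))=\hm_\lambda(x_1,\dots,x_m;y_1,\dots,y_n),
\]
by the definition of $L_{\lambda,\mu}$ and \eqref{hmla}.

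No real obstacle is expected. The only point to watch is the exact cancellation in Step 1 together with the identity $\sum_s\mu_s=r$ in Step 2. That identity is precisely why the normalizing factor is $q^r$, rather than something depending on $\mu$.
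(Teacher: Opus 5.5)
Your proposal is correct and follows essentially the same argument as the paper: expand $\widetilde m_\lambda(x)$ in power sums, cancel the factor $(q-1)^{\ell(\lambda)}$, rewrite $q^r/\prod_s(q^{\mu_s}-1)$ as $\prod_s(1-t^{\mu_s})^{-1}$, and set $t=0$ to recover $\Phi_{m|n}(m_\lambda(x))$. Your explicit remarks that $\sum_s\mu_s=r$ and that $\mathbb C(q)$ sits inside $\mathbb C((t))$ via $q\mapsto t^{-1}$ only spell out steps the paper leaves implicit.
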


\begin{proof}
Using the definition of $\widetilde m_\lambda(x)$, we get
\[
\begin{aligned}
        G_\lambda(t)
        &=\sum_{\mu\in\msPr}L_{\lambda\mu}
          \frac{q^r}{\prod_{1\leq i\leq\ell(\mu)}(q^{\mu_i}-1)}\Phi_{m|n}(p_\mu(x))  \\
        &=\sum_{\mu\in\msPr}L_{\lambda\mu}
          \prod_{1\leq i\leq\ell(\mu)}(1-t^{\mu_i})^{-1}\Phi_{m|n}(p_\mu(x)).
\end{aligned}
\]
Each factor $(1-t^{\mu_i})^{-1}$ lies in $\C[[t]]$ and has constant term $1$.  Therefore $G_\lambda(t)$ lies in $\Lambda^r_{m|n}\otimes_\C\C[[t]]$ and
\[
        G_\lambda(0)=\sum_{\mu\in\msPr}L_{\lambda\mu}\Phi_{m|n}(p_\mu(x))
        =\Phi_{m|n}(m_\lambda(x))=\mathrm{hm}_\lambda(x_1,\dots,x_m;y_1,\dots,y_n) .
\]
This completes the proof.
\end{proof}

The preceding lemma provides the necessary specialization information. Applying Lemma \ref{lem:t-adic-basis}(2) to the family $G_\la(t)$ immediately gives the following basis theorem.

\begin{Thm} \label{thm:modified-monomial-basis}
For every $r\ge 0$,
\[
        \{\ti{\hm}_\la(x_1,\dots,x_m;y_1,\dots,y_n)\mid \lambda\in H(m|n,r)\}
\]
forms a basis of $\Lambda^r_{m|n}\otimes_\C\mbc(v)$.
\end{Thm}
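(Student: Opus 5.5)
The plan is to apply Lemma~\ref{lem:t-adic-basis} with $M=\Lambda^r_{m|n}$ and $N=\#H(m|n,r)$; by Theorem~\ref{super Schur function}, $N=\dim_\C\Lambda^r_{m|n}$. For $\lambda\in H(m|n,r)$ we take $u_\lambda(t)=G_\lambda(t)$. Lemma~\ref{lem:modified-specialization} shows that each $G_\lambda(t)$ lies in $\Lambda^r_{m|n}\otimes_\C\C[[t]]$. It also shows that $G_\lambda(0)=\hm_\lambda(x_1,\dots,x_m;y_1,\dots,y_n)$. By Lemma~\ref{hm_lambda}, these specializations for $\lambda\in H(m|n,r)$ form a $\C$-basis of $M$. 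Part (1) of Lemma~\ref{lem:t-adic-basis} then shows that the $G_\lambda(t)$ form a basis over $\C((t))$.

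Next we descend to $\C(q)$. Set $\widetilde u_\lambda=q^r(q-1)^{-\ell(\lambda)}\ti{\hm}_\lambda$, an element of $M\otimes_\C\C(q)$. Under $q\mapsto t^{-1}$ it maps exactly to $G_\lambda(t)$; this is the computation already carried out in the proof of Lemma~\ref{lem:modified-specialization}. Part (2) then gives that the $\widetilde u_\lambda$ form a $\C(q)$-basis of $M\otimes_\C\C(q)$. Since each $q^r(q-1)^{-\ell(\lambda)}$ is a nonzero scalar in $\C(q)$, the elements $\ti{\hm}_\lambda$ themselves also form a $\C(q)$-basis.

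Finally we extend scalars along $\C(q)\subseteq\C(v)$, where $q=v^2$. This preserves both linear independence and the spanning property, so the $\ti{\hm}_\lambda$, $\lambda\in H(m|n,r)$, form a basis of $\Lambda^r_{m|n}\otimes_\C\C(v)$.

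The only point needing care is a bookkeeping check. The coefficients of $\widetilde m_\lambda$ lie in $\C(q)$, not merely in $\C(v)$, so the $\widetilde u_\lambda$ really sit in $M\otimes\C(q)$ and Lemma~\ref{lem:t-adic-basis}(2) applies verbatim. Beyond that, no real obstacle is expected; everything else is immediate from the preceding lemmas.
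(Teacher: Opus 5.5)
Your proposal is correct and follows the paper's own proof. Like the paper, you apply Lemma~\ref{lem:t-adic-basis}(2) to the family $G_\lambda(t)$, $\lambda\in H(m|n,r)$, with Lemmas~\ref{hm_lambda} and~\ref{lem:modified-specialization} supplying the basis at $t=0$. You also make explicit two steps the paper leaves tacit: dividing out the nonzero scalar $q^r(q-1)^{-\ell(\lambda)}$, and extending scalars from $\C(q)$ to $\C(v)$ via $q=v^2$.
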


\begin{proof}
Apply Lemma \ref{lem:t-adic-basis}(2) with \(M=\Lamnr\) and with the elements
\[
\widetilde u_\lambda:=G_\la(t)\in M\otimes_{\mathbb C}\mathbb C(q).
\]
By Lemmas \ref{hm_lambda} and \ref{lem:modified-specialization}, the set \(\{G_\la(0)\mid \lambda\in H(m|n,r)\}\) is a \(\mathbb C\)-basis of \(M\).
It follows from Lemma \ref{lem:t-adic-basis}(2) that the set \(\{G_\lambda(t)\mid\la\in\Hmnr\}\) form a basis of \(M\otimes_{\mathbb C}\mathbb C(q)\). Therefore the set
$\{\ti{\hm}_\la(x_1,\dots,x_m;y_1,\dots,y_n)\mid \lambda\in H(m|n,r)\}$
forms a basis of $\Lambda^r_{m|n}\otimes_\C\mbc(v)$.
\end{proof}
Finally, we are in a position to transfer the Geck--Rouquier basis.
\begin{Thm}\label{GR basis of quantum Schur superalgebra}
The set $\left\{\eta_r\left(f_\lambda^*\right) \mid \lambda \in \Hmnr\right\}$ forms a basis of the center $\ZSq$ of the quantum Schur superalgebra $\Sqmnr$.
\end{Thm}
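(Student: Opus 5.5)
The plan mirrors the proof of Theorem~\ref{Jones basis}, with the power-sum basis replaced by the modified monomial basis of Theorem~\ref{thm:modified-monomial-basis}. First, for $\lambda\in\Par$ I expand the Geck--Rouquier element in the primitive central idempotents over $\mbc(v)$,
\[
f_\lambda^*=\sum_{\mu\in\Par}a_{\lambda,\mu}\epvmu,\qquad a_{\lambda,\mu}\in\mbc(v).
\]
Applying \eqref{eta_r} then gives, for every $\lambda$,
\[
\eta_r(f_\lambda^*)=\sum_{\mu\in\Hmnr}a_{\lambda,\mu}\tepvmu.
\]
Since $\{\tepvmu\mid\mu\in\Hmnr\}$ is a basis of $\ZSq$, it suffices to show that the square matrix $A=(a_{\lambda,\mu})_{\lambda,\mu\in\Hmnr}$ is invertible.

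Second, I apply the Wan--Wang isomorphism $\psi_r$ of \eqref{def psir}. By Theorem~\ref{psi_r}, $\psi_r(f_\lambda^*)=\widetilde m_\lambda(x)$ and $\psi_r(\epvmu)=\widetilde s_\mu(x)$. Hence
\[
\widetilde m_\lambda(x)=\sum_{\mu\in\Par}a_{\lambda,\mu}\,\widetilde s_\mu(x),\qquad \widetilde s_\mu(x)=\kappa_\mu(q)^{-1}s_\mu(x).
\]
I apply the $\mbc(v)$-linear extension of $\Phimn$ to both sides. By Theorem~\ref{super Schur function}, $\Phimn(s_\mu(x))=0$ for $\mu\notin\Hmnr$, so for $\lambda\in\Hmnr$,
\[
\ti{\hm}_\lambda(x_1,\dots,x_m;y_1,\dots,y_n)=\sum_{\mu\in\Hmnr}a_{\lambda,\mu}\,\kappa_\mu(q)^{-1}\,\hs_\mu(x_1,\dots,x_m;y_1,\dots,y_n).
\]

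Third, I conclude invertibility. By Theorem~\ref{super Schur function}, the elements $\hs_\mu$ with $\mu\in\Hmnr$ form a basis of $\Lamnrv$. By Theorem~\ref{thm:modified-monomial-basis}, the elements $\ti{\hm}_\lambda$ with $\lambda\in\Hmnr$ also form a basis of $\Lamnrv$. The matrix $\bigl(a_{\lambda,\mu}\kappa_\mu(q)^{-1}\bigr)_{\lambda,\mu\in\Hmnr}$ is the transition matrix between these two bases, so it is invertible. It equals $A$ multiplied on the right by the diagonal matrix with entries $\kappa_\mu(q)^{-1}$. These entries are nonzero because the Schur elements $\kappa_\mu(q)$ are nonzero, and indeed $\kappa_\mu(1)=r!/d_\mu\neq0$. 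Hence $A$ is invertible and the theorem follows.

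The main obstacle is bookkeeping rather than anything conceptual: I have to confirm that $\psi_r$ sends $\epvmu$ exactly to $\widetilde s_\mu$ and $f_\lambda^*$ exactly to $\widetilde m_\lambda$, with matching normalizations. I also have to make sure that the field $\mbc(q)$ used in Theorem~\ref{thm:modified-monomial-basis} is compatible with $\mbc(v)$, which holds because $\mbc(q)\subseteq\mbc(v)$ and a basis stays a basis after extending scalars. Unlike the power-sum case, no restriction to $\Hpowmnr$ is needed, because the specialization $G_\lambda(0)=\hm_\lambda$ in Lemma~\ref{lem:modified-specialization} already lands on a basis indexed by the full hook set $\Hmnr$ (Lemma~\ref{hm_lambda}).
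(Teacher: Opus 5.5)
Your proposal is correct and follows essentially the same route as the paper: expand $f_\lambda^*$ in the idempotents $\epvmu$, push the identity through $\psi_r$ and $\Phimn$, and deduce invertibility of $(a_{\lambda,\mu})_{\lambda,\mu\in\Hmnr}$ from Theorems~\ref{super Schur function} and~\ref{thm:modified-monomial-basis}. The differences are only cosmetic: you make explicit the diagonal factor $\kappa_\mu(q)^{-1}$ and the scalar extension $\mbc(q)\subseteq\mbc(v)$, which the paper absorbs into $\widetilde{\hs}_\mu$ and into the statement of Theorem~\ref{thm:modified-monomial-basis}.
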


\begin{proof}
By \eqref{def psir}, Theorem \ref{psi_r}, and \eqref{f_la} we have
$$
\widetilde{m}_\lambda(x)=\sum_{\mu \in \Par} a_{\lambda, \mu} \widetilde{s}_\mu(x)
$$
for $\lambda \in \Par$.
Applying $\Phimn$ and using Theorem \ref{super Schur function} gives, for $\lambda \in \Hmnr$,
$$
\widetilde{\hm}_\lambda\left(x_1,\dots, x_m; y_1, \dots, y_n\right)=\sum_{\mu \in \Hmnr} a_{\lambda, \mu} \widetilde{\hs}_\mu\left(x_1,\dots, x_m; y_1, \dots, y_n\right).
$$
By  Theorems  \ref{super Schur function} and \ref{thm:modified-monomial-basis}, the matrix $A=\left(a_{\lambda, \mu}\right)_{\lambda, \mu \in \Hmnr}$ is invertible. Consequently, from \eqref{eta_rf_la}, the elements $\eta_r(f_\lambda^*)$ for $\lambda \in \Hmnr$ form a basis of $\ZSq$.
\end{proof}

\subsection{The center of the classical Schur superalgebra}

In this final subsection of Section 4, we specialise the quantum parameter \(v\) to \(1\) and describe bases for the center of the classical Schur superalgebra \(\mathcal{S}(m|n,r)\).

Let \(\mathfrak{gl}_{m|n}=\mathfrak{gl}_{m|n}(\mathbb{C})\) be the general linear Lie superalgebra over \(\mathbb{C}\), with standard basis \(E_{ij}\) (\(1\le i,j\le m+n\)) and \(\mathbb{Z}_2\)-grading \(\bar E_{ij}=\bar i+\bar j\). Let \(\mathcal{U}(\mathfrak{gl}_{m|n})\) be its universal enveloping algebra, and let \(\zrcl:\mathcal{U}(\mathfrak{gl}_{m|n})\to \operatorname{End}_{\mathbb{C}}(V^{\otimes r})\) be the natural representation on the \(r\)-fold tensor power of the natural module \(V=\mathbb{C}^{m|n}\). The classical Schur superalgebra is defined as the image
\[
\mathcal{S}(m|n,r):=\zrcl(\mathcal{U}(\mathfrak{gl}_{m|n}))\subseteq \operatorname{End}_{\mathbb{C}}(V^{\otimes r}).
\]
 Let \(\etarcl:\mathbb{C}\mathfrak{S}_r^{\mathrm {op}} \to \operatorname{End}_{\mathbb{C}}(V^{\otimes r})\) be the signed permutation action of the symmetric group.

The conjugacy class sums \(\{c_\lambda\mid \lambda\in\msPr\}\) form the standard basis of the center of  \(\mathbb{C}\mathfrak{S}_r\). Applying the  map \(\etarcl\) to these elements, gives the following classical analogue of Theorem \ref{basis of Sqmnr}.

\begin{Thm}\label{classical case}
The set
\[
\{\etarcl(c_\lambda) \mid \lambda \in \Hpow(m|n,r)\}
\]
forms a \(\mathbb{C}\)-basis of the center \(\mathscr{Z}(\mathcal{S}(m|n,r))\) of the classical Schur superalgebra \(\mathcal{S}(m|n,r)\).
\end{Thm}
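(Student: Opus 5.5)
The plan is to repeat the argument of Theorem~\ref{Jones basis} and Theorem~\ref{basis of Sqmnr} directly at $v=1$, using the classical Schur--Weyl data in place of the Hecke-algebra data.

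First I will recall the classical super Schur--Weyl duality of Sergeev and Berele--Regev. It says that $\mathcal S(m|n,r)=\End_{\mathbb C\mathfrak S_r}(V^{\otimes r})$, and it gives the bimodule decomposition
$$V^{\otimes r}=\bigoplus_{\la\in\Hmnr}L(\la)\otimes S^\la,$$
with the $L(\la)$ pairwise non-isomorphic irreducible $\mathcal S(m|n,r)$-modules. From this:
\begin{itemize}
\item $\mathcal S(m|n,r)\cong\prod_{\la\in\Hmnr}\End_{\mathbb C}(L(\la))$ as ordinary algebras, so $\mathscr Z(\mathcal S(m|n,r))$ has basis the primitive central idempotents $\tepla$ for $\la\in\Hmnr$;
\item $\etarcl$ maps $\mathscr Z(\mathbb C\mathfrak S_r)$ into $\mathscr Z(\mathcal S(m|n,r))$, since its image commutes with $\mathcal S(m|n,r)$ and lies in it by the double centraliser property;
\item exactly as in \eqref{eta_r}, $\etarcl(\epla)=\tepla$ if $\la\in\Hmnr$ and $\etarcl(\epla)=0$ otherwise, because $\epla$ acts as $1$ on the $S^\la$-isotypic component and as $0$ elsewhere.
\end{itemize}

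Next I will expand class sums in idempotents. Write $c_\la=\sum_{\mu}a_{\la,\mu}(1)\epmu$ as in \eqref{clambda express}; the coefficients are $a_{\la,\mu}(1)=|C_\la|\chi^\mu(w_\la)/d_\mu$. Applying $\etarcl$ gives
$$\etarcl(c_\la)=\sum_{\mu\in\Hmnr}a_{\la,\mu}(1)\,\tepmu .$$
The classical Frobenius formula
$$\frac1{z_\la}p_\la(x)=\sum_\mu a_{\la,\mu}(1)\frac{d_\mu}{r!}s_\mu(x)$$
can be checked directly from $p_\la=\sum_\mu\chi^\mu(w_\la)s_\mu$ and $|C_\la|=r!/z_\la$. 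Applying $\Phimn$ and Theorem~\ref{super Schur function} gives
$$\frac1{z_\la}\hp_\la=\sum_{\mu\in\Hmnr}a_{\la,\mu}(1)\frac{d_\mu}{r!}\hs_\mu .$$
By Theorem~\ref{thm:corrected-power-product} the left-hand sides for $\la\in\Hpow(m|n,r)$ form a basis of $\Lamnr$. The $\hs_\mu$ for $\mu\in\Hmnr$ also form a basis, and the two index sets have equal cardinality. Hence the square matrix $\bigl(a_{\la,\mu}(1)d_\mu/r!\bigr)$ is invertible, which is the determinant \eqref{det}. Therefore $(a_{\la,\mu}(1))_{\la\in\Hpow,\mu\in H}$ is invertible, and the $\etarcl(c_\la)$ for $\la\in\Hpow(m|n,r)$ form a basis.

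The main obstacle is the first step. One must make sure the classical duality identifies $\etarcl(\epla)$ with the central idempotent of $\mathcal S(m|n,r)$ on $L(\la)$. One must also make sure the ordinary center, rather than the supercenter, is what is spanned by these idempotents. If the ordinary center concern arises, I would rerun the argument of Lemma~\ref{even element} with $E_{hh}$ in place of $\sfk_h$: ad-eigenvalues replace the $v$-powers and again force evenness. However, the idempotents already lie in $\etarcl(\mathscr Z(\mathbb C\mathfrak S_r))$, which consists of even elements, so this should be automatic. An alternative for the first step is to specialise Theorem~\ref{basis of Sqmnr} at $v=1$ through the $\R$-form. That route needs an integral form of $\Sqmnr$ compatible with $\eta_r$, so I prefer the direct classical argument.
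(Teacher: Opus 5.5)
Your proposal is correct and follows essentially the same route as the paper. Both apply $\etarcl$ to the expansion $c_\lambda=\sum_\mu a_{\lambda,\mu}(1)\epmu$ of \eqref{clambda express}, discard the non-hook idempotents, and get invertibility of $(a_{\lambda,\mu}(1))_{\lambda\in\Hpowmnr,\,\mu\in\Hmnr}$ by pushing the Frobenius formula through $\Phimn$ and using Theorems~\ref{super Schur function} and~\ref{thm:corrected-power-product}, which is exactly the determinant \eqref{det}. The differences are minor: you check the classical Frobenius identity and the values $a_{\lambda,\mu}(1)=|C_\lambda|\chi^\mu(w_\lambda)/d_\mu$ directly instead of importing them from the Hecke-level specialization, and you state explicitly the classical Sergeev--Berele--Regev duality that the paper uses tacitly to identify $\etarcl(\epmu)$ with $\tepmu$.
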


\begin{proof}
Applying the homomorphism \(\etarcl\) to the identity \eqref{clambda express}  yields
\[
\etarcl(c_\lambda)=\sum_{\mu\in\Hmnr} a_{\lambda,\mu}(1) \tepmu,
\]
where $\tepmu$
denote the central primitive idempotent in $\Smnr$.
 By \eqref{det}, the matrix
\(
\bigl(a_{\lambda,\mu}(1)\bigr)_{\lambda\in \Hpowmnr,\ \mu\in\Hmnr}
\)
is invertible. Therefore the elements \(\etarcl(c_\lambda)\) with \(\lambda\in \Hpow(m|n,r)\) form a basis of \(\mathscr{Z}(\mathcal{S}(m|n,r))\).
\end{proof}


\begin{thebibliography}{99}\frenchspacing
\bibitem{AM}
M. F.  Atiyah and I. G.  Macdonald, \textit{Introduction to commutative algebra}, Addison-Wesley Publishing Co., Reading, Mass.-London-Don Mills, Ont., 1969.

\bibitem{BY}
P. Batra and H. Yamane, \textit{Centers of generalized quantum groups}, J. Pure Appl. Algebra {\bf 222} (2018), 1203--1241.


\bibitem{BR}
A. Berele and A. Regev,\textit{Hook Young diagrams with applications to combinatorics and
              to representations of Lie superalgebras}, Adv. in Math. {\bf 64}, (1987), 118--175.
\bibitem{CW}
S. Cheng and W. Wang, \textit{ Dualities and representations of Lie superalgebras}, Graduate Studies in Mathematics, {\bf 144}, American Mathematical Society, Providence, RI, 2012.
\bibitem{EK}
H. El Turkey and J. Kujawa, \textit{Presenting Schur superalgebras}, Pacific J. Math. {\bf 262} (2013), no. 2, 285--316.
\bibitem{Fu21}
Q. Fu, \textit{ Schur-Weyl duality and centers of quantum Schur algebras}, J. Pure Appl. Algebra, {\bf 225}(2021), no. 5, 106594.





\bibitem{GP}
M. Geck and G. Pfeiffer, \textit{Characters of finite Coxeter groups and Iwahori-Hecke algebras}, London Mathematical Society Monographs. New Series, {\bf 21}. The Clarendon Press, Oxford University Press, New York, 2000.


\bibitem{GR}
M. Geck and R. Rouquier,
{\em Centers and simple modules for Iwahori-Hecke algebras}, Finite reductive groups (Luminy, 1994), 251--272, Progr. Math., 141, Birkh$\mathrm{\ddot{a}}$user Boston, Boston, MA, 1997.

\bibitem{GePr}
T. Geetha and A. Prasad,
{\em Center of the Schur algebra}, Asian-Eur. J. Math. {\bf 9} (2016), 1650006, 11 pp.


\bibitem{Hung}
T. W. Hungerford, \textit{Algebra}, Reprint of the 1974 original. Graduate Texts in Mathematics, {\bf 73}. Springer-Verlag, New York-Berlin, 1980.

\bibitem{Jones}
L. Jones, \textit{Centers of generic Hecke algebras}, Trans. Amer. Math. Soc. {\bf 317} (1990), 361--392.

\bibitem{KempRamgoolam2020}
G. Kemp and S. Ramgoolam, \textit{
BPS states, conserved charges and centres of symmetric group algebras}
J. High Energy Phys. 2020, {\bf 146}, 42 pp.

\bibitem{LWY}
Y. Luo, Y.J. Wang, Y. Ye, \textit{On the Harish-Chandra homomorphism for quantum superalgebras}, Comm. Math. Phys. {\bf 393} (2022),  1483--1527.

\bibitem{Macd}
I. G. Macdonald, {\em Symmetric functions and Hall polynomials}, Second edition. With contributions by A. Zelevinsky.  Oxford Mathematical Monographs, The Clarendon Press, Oxford University Press, New York, 1995.

\bibitem{Mi}
H. Mitsuhashi, \textit{Schur-Weyl reciprocity between the quantum superalgebra and
              the {I}wahori-{H}ecke algebra}, Algebr. Represent. Theory {\bf 9} (2006), 309--322.
\bibitem{Mi1}
H. Mitsuhashi, \textit{A super Frobenius formula for the characters of Iwahori-Hecke algebras}, Linear Multilinear Algebra {\bf 58} (2010),  941--955.

\bibitem{Ram}
A. Ram, \textit{A Frobenius formula for the characters of the Hecke algebras}, Invent. Math. {\bf 106} (1991), 461--488.
\bibitem{Se}
A. Sergeev, \textit{The tensor algebra of the identity representation as a module over the Lie superalgebras {${\frak gl}(n,m)$}\ and {$Q(n)$}}, Math. USSR Sbornik {\bf 51} (1985), 419--427.
\bibitem{WW}
J. Wan, W. Wang, \textit{Frobenius map for the centers of Hecke algebras}, Trans. Am. Math. Soc. {\bf 367} (2015), 5507--5520.
\bibitem{ZR}
R. Zhang,
{\em Finite-dimensional irreducible representations of the quantum supergroup $U_q(\frak{gl}(m|n))$},
J. Math. Phys. {\bf 34} (1993), 1236--1254.
\end{thebibliography}
\end{document}